\newtheorem{proposition}{Proposition}
\newtheorem{lemma}{Lemma}
\newtheorem{claim}{Claim}
\newtheorem{theorem}{Theorem}
\theoremstyle{definition}
\theoremstyle{remark}
\newtheorem{remark}{Remark}
\newtheorem{example}{Example}
\newcommand{\R}{\ensuremath{\mathbb{R}}}
\newcommand{\Z}{\ensuremath{\mathbb{Z}}}
\newcommand{\conv}{\ensuremath{\text{conv}}}
\newcommand{\diag}{\ensuremath{\text{diag}}}
\newcommand{\rev}[1]{{#1}}
\begin{document}

	\title[quadratic optimization with M-matrices and semi-continuous variables]
	{Strong formulations for quadratic optimization with M-matrices and semi-continuous variables
	}
	
	
	\author{Alper Atamt{\"u}rk        \and
		Andr{\'e}s G{\'o}mez 
	}
	
	\thanks{ \noindent \hskip -5mm
		A. Atamt\"urk: Department of Industrial Engineering \& Operations Research, University of California, Berkeley, CA 94720.
		\texttt{atamturk@berkeley.edu}   \\
		A. G\'{o}mez: Department of Industrial Engineering,  Swanson School of Engineering, University of Pittsburgh, Pittsburgh, PA 15261. \texttt{agomez@pitt.edu}
	}

	\maketitle

	\BCOLReport{18.01}

\begin{abstract}
We study quadratic optimization with \rev{indicator} variables and an M-matrix, i.e., a PSD matrix with non-positive
 off-diagonal entries, \rev{which} arises \rev{directly} in image segmentation and portfolio optimization \rev{with transaction costs}, as well as a substructure of general quadratic optimization problems.  We prove, under mild assumptions, that the minimization problem is solvable in polynomial time by showing its equivalence to a submodular minimization problem. To strengthen the formulation, 
we decompose the quadratic function into a sum of simple quadratic functions with at most two \rev{indicator} variables each, and provide the convex-hull descriptions of these sets. 
We also describe strong conic quadratic valid inequalities.
Preliminary computational experiments indicate that the proposed inequalities can substantially improve the strength of the continuous relaxations with respect to the standard perspective reformulation.

\vskip 3mm
\noindent
\textbf{Keywords} Quadratic optimization, submodularity, perspective formulation, conic quadratic cuts, convex piecewise nonlinear inequalities

\end{abstract}

\begin{center}
	January 2018; April 2018
\end{center}

\section{Introduction}
\rev{
Consider the quadratic optimization problem with indicator variables
\[
\text{(QOI)} \ \ \ \min \bigg \{ a'x + b'y + y'Ay \ : \ (x,y) \in C, \ 0 \le y \le x, \ x \in \{0,1\}^N \bigg \},
\]
where  $N=\{1,\ldots,n\}$, $a$ and $b$ are $n$-vectors, $A$ is an $n \times n$ symmetric matrix and $C \subseteq \R^{N \times N}$. 
Binary variables $x$ indicate a selected subset of $N$ and are often used to model non-convexities such as cardinality constraints and fixed charges.
(QOI) arises in linear regression with best subset selection \citep{Bertsimas2016}, control \citep{Gao2011}, filter design \citep{Wei2013} problems, and portfolio optimization \cite{Bienstock1996}, among others.	
In this paper, we give strong convex relaxations for the related mixed-integer set
\[
S=\big \{(x,y,t)\in \{0,1\}^N\times \R^{N}\times \R:y'Qy\leq t,\; 0 \le y_i\leq x_i \text{ for all }i\in N\big\},
\]
where $Q$ is an M-matrix \citep{plemmons1977m}, i.e., $Q\succeq 0$ and $Q_{ij}\leq 0$ if $i\neq j$. 
M-matrices arise in the analysis of Markov chains \cite{markov-m}. 
Convex quadratic programming with an M-matrix is also studied on its own right \cite{qp-m}.
Quadratic minimization with an M-matrix arises directly in a variety of applications including portfolio optimization \rev{with transaction costs} \citep{Lobo2007} and image segmentation \citep{Hochbaum2013}. 
 
There are numerous approaches in the literature for deriving strong formulations for (QOI) and $S$.
\citet{DL:ipco-qp-ind} describe lifted inequalities for (QOI) from its continuous quadratic optimization counterpart 
over bounded variables.
\citet{BM:conv-noncov} give a characterization linear inequalities obtained by strengthening gradient inequalities of a convex objective function over a non-convex set. Convex relaxations of $S$ can also be constructed from the mixed-integer epigraph of the bilinear function $\sum_{i\neq j}Q_{ij}y_iy_j$. There is an increasing amount of recent work focusing on bilinear functions \cite[e.g.,][]{boland2017bounding,boland2017extended,Luedtke2012}. However, the convex hull of such functions is not fully understood even in the continuous case. More importantly, considering the bilinear functions independent from the quadratic function $\sum_{i\in N}Q_{ii}y_i^2$ may result in weaker formulations for $S$. Another approach, applicable to general mixed-integer optimization, is to derive strong formulation based on disjunctive programming \citep{balas1985disjunctive,Ceria1999,stubbs1999branch}. Specifically, if a set is defined as the disjunction of convex sets, then its convex hull can be represented in an extended formulation using perspective functions. Such extended formulations, however, require creating a copy of each variable for each disjunction, and lead to prohibitively large formulations even for small-scale instances. There is also a increasing body of work on characterizing the convex hulls in the original space of variables, but such descriptions may be highly complex even for a single disjunction, e.g., see \cite{AN:conicmir:ipco,belotti2015conic,kilincc2015two,modaresi2016intersection}. 
}

\rev{The convex hull of $S$ is well-known for a couple of special cases. When} the matrix $Q$ is diagonal, the quadratic function \rev{$y'Qy$} is separable and 
the convex hull of $S$ can be described using the \emph{perspective reformulation} \citep{ Frangioni2006}.
This perspective formulation has a compact conic quadratic representation \citep{akturk2009strong,Gunluk2010} and is \rev{by} now a standard \rev{model strengthening} technique for mixed-integer nonlinear optimization \citep{BLTW:mp-indicator,HBCO:on-off,Mahajan2017,Wu2017}. In particular, a convex quadratic function $y'Ay$ is decomposed as $y'Dy+y'Ry$, where $A=D+R$, $D, R\succeq 0$ and $D$ is diagonal
and then each diagonal term $D_{ii} y_i^2 \le t_i$, $i \in N$, is reformulated as
$y_i^2 \le t_i x_i$.
Such decomposition and strengthening of the diagonal terms are also standard for the binary restriction, where $y_i=x_i$, $i\in N$, in which case $x'Ax \Leftrightarrow \sum_{i\in N}D_{ii}x_i+x'Rx$ \citep[e.g.][]{anstreicher2012convex,poljak1995convex}. 
The binary restriction of $S$, where $y_i=x_i$ and $Q_{ij}\leq 0$, \rev{$i \neq j$,} is also well-understood, since in that case the quadratic function $x'Qx$ is submodular \citep{Nemhauser1978} and min $\{a' x + x'Qx: x \in \{0,1\}^n\}$ is a minimum cut problem \rev{\citep{ivuanescu1965,picard1975minimum}} and, \rev{therefore}, is solvable in poynomial time. 

Whereas the set $S$ with an M-matrix is interesting on its own, the convexification 
results on $S$ can also be used to strengthen a general quadratic $y'Ay$ by decomposing $A$ as $A=Q+R$, where $Q$ is an M-matrix, \rev{and then applying the convexification results in this paper only on the $y'Qy$ term with negative off-diagonal coefficients}, generalizing the perspective reformulation approach above. \rev{We demonstrate this approach for portfolio optimization problems with negative as well as positive correlations through computations that indicate significant additional strengthening over the perspective formulation through exploiting the negative correlations.}
 
The key idea for deriving strong formulations for $S$ is decompose
the quadratic function in the definition of $S$ as 
 the sum of quadratic functions involving one or two variables:
\begin{equation}
\label{eq:quadraticDecomposition}
y'Qy=\sum_{i=1}^n\left(\sum_{j=1}^nQ_{ij}\right)y_i^2-\sum_{i=1}^n\sum_{j=i+1}^n Q_{ij}(y_i-y_j)^2.
\end{equation}  
Since a \rev{univariate} quadratic function with \rev{an indicator} is well-understood, we turn our attention to studying the mixed-integer set with two \rev{ continuous and two indicator} variables:
\begin{equation*}
X=\left\{(x,y,t)\in \{0,1\}^2\times \R^2\times \R: (y_1-y_2)^2\leq t,\; 0 \le y_i\leq x_i, \ i=1,2\right\}.
\end{equation*}

\rev{\citet{FGH:2x2decomp} also construct strong formulations for (QOI) based on $2\times 2$ decompositions. In particular, they characterize quadratic functions that can be decomposed as the sum of convex quadratic functions with at most two variables. They utilize the disjunctive convex extended formulation for the mixed-integer quadratic set 
$$\hat{X}=\left\{(x,y,t)\in \{0,1\}^2\times \R^2\times \R: q(y)\leq t,\; 0 \le y_i\leq x_i, \ i=1,2\right\},$$
where $q(y)$ is a general convex quadratic function. The authors report that the formulations are weaker when the matrix $A$ is an M-matrix, and remark on the high computational burden of solving the convex relaxations due the large number of additional variables. Additionally, \citet{Jeon2017} give conic quadratic valid inequalities for $\hat{X}$, which can be easily projected into the original space of variables, and demonstrate their effectiveness via computations. However, a convex hull description of $\hat{X}$ in the original space of variable is unknown.}

In this paper, we improve upon \rev{previous} results for the sets $S$ and $X$. In particular, our main contributions are 
($i$) showing, under mild assumptions, that the minimization of a quadratic function with an M-matrix and \rev{indicator} variables is equivalent to a submodular minimization problem and, hence, solvable in polynomial time; 
($ii$) \rev{giving} the convex hull description of $X$ \rev{in the original space of variables --- the resulting formulations for $S$ are at least as strong as the ones used by Frangioni et al. and require substantially fewer variables};
($iii$) \rev{proposing} conic quadratic inequalities amenable to use with conic quadratic MIP solvers --- the proposed inequalities dominate the ones given by Jeon et al.; 
($iv$) \rev{demonstrating} the \rev{strength and performance of the resulting formulations for (QOI)}.

\vskip 1mm
\noindent
\textit{Outline}
The rest of the paper is organized as follows. In Section~\ref{sec:preliminaries} we review the previous results for $S$ and $X$. In Section~\ref{sec:convexHullUnbounded} we study the relaxations of $S$ and $X$, where the \rev{constraints $0 \le y_i\leq x_i$ are relaxed to $y_i(1-x_i)=0$,} and the related optimization problem.
In Section~\ref{sec:convexHullBounded} we give the convex hull description of $X$. The convex hulls obtained in Sections~\ref{sec:convexHullUnbounded} and \ref{sec:convexHullBounded} cannot be immediately implemented with off-the-shelf solvers \rev{in the original space of variables}. Thus, in Section~\ref{sec:valid} we propose valid conic quadratic inequalities and discuss their strength. 
In Section~\ref{sec:extensions} we give extensions to quadratic functions with positive off-diagonal entries and continuous variables unrestricted in sign.
In Section~\ref{sec:computations} we provide a summary computational experiments and in Section~\ref{sec:conclusions} we conclude the paper.

\paragraph{Notation}Throughout the paper, we use the following convention for division by $0$: $\nicefrac{0}{0}=0$ and $\nicefrac{a}{0}=\infty$ if $a>0$. In particular, the function $p:[0,1]\times \R_+\to \R_+$ given by $p(x,y)=\nicefrac{y^2}{x}$ is the closure of the perspective function of the quadratic function $q(y)=y^2$, and is convex \citep[e.g.][p. 160]{Hiriart2013}. For a set $X\subseteq \R^N$, $\text{conv}(X)$ denotes the convex hull of $X$. Throughout, $Q$ denotes an $n\times n$ M-matrix, i.e., $Q \succeq 0$ and $Q_{ij}\leq 0$ for $i\neq j$.

\section{Preliminaries}
\label{sec:preliminaries}

In this section we briefly review the relevant results on the binary restriction of $S$ and the previous results on set $X$.

\subsection{The binary restriction of $S$}
\label{subsec:binary}

Let $S_B$ be the binary restriction of $S$, i.e. $y=x \in \{0,1\}^n$. In this case,
the decomposition
\begin{align} \label{eq:bin-decomp}
x'Qx = \sum_{i=1}^n\left(\sum_{j=1}^nQ_{ij}\right)x_i^2-\sum_{i=1}^n\sum_{j=i+1}^n Q_{ij}(x_i-x_j)^2 \le t
\end{align}
leads to $\conv(S_B)$, by simply taking the convex hull of each term.
Indeed, the quadratic problem $\min \big \{x'Qx: x \in\{0,1\}^n \big \}$ is equivalent to an undirected min-cut problem 
\cite[e.g.][]{picard1975minimum} and can be formulated as
\[
\min \sum_{i=1}^n\left(\sum_{j=1}^nQ_{ij}\right)x_i - \sum_{i=1}^n\sum_{j=i+1}^n Q_{ij} t_{ij}: x_i - x_j \le t_{ij}, \ x_j - x_i \le t_{ij}, \ 0 \le x \le 1.
\]
Decomposition \eqref{eq:bin-decomp} leading to a simple convex hull description of $S_B$ in the binary case is \rev{our} main motivation for studying decomposition \eqref{eq:quadraticDecomposition} with the \rev{indicator} variables.

\subsection{Previous results for set $X$}

Here we review the valid inequalities of Jeon et al. \cite{Jeon2017} for $X$.
Although their construction is not directly applicable 
\rev{as they assume a strictly convex function}, 
one can utilize it to obtain 
limiting inequalities. For $q(y)=y'Ay$ the inequalities of Jeon et al. are described via the inverse of the Cholesky factor of $A$. However, for $X$, we have $q(y)=(y_1-y_2)^2$ or $q(y)=y'Ay$, where $A=\left [\begin{smallmatrix} 1 & -1 \\ -1 & 1\end{smallmatrix} \right ]$ is a singular matrix and the Cholesky factor is not invertible.

However, if the matrix is given by $A= \left [\begin{smallmatrix} d_1 & -1 \\ -1 & d_2\end{smallmatrix} \right ]$ with $d_1,d_2> 1$, then their approach yields three valid inequalities:
\begin{align*}
d_2\frac{y_2^2}{x_2}-\frac{1}{d_1}x_1+\left(\frac{d_1d_2-1}{d_1}\right)\frac{y_2^2}{x_2}\leq t\\
(d_2-1)\frac{y_2^2}{x_2}+d_1\frac{y_1^2}{x_1}+\frac{x_2}{d_1}-2x_2\leq t\\
\left(\frac{d_1d_2-1}{d_1}\right)\frac{y_2^2}{x_2}+\frac{\left(\sqrt{d_1}y_1-\sqrt{\frac{1}{d_1}}y_2\right)^2}{x_1+x_2}\leq t.
\end{align*}
As $d_1, d_2 \rightarrow 1$, we arrive at three limiting valid inequalities for $X$.
\begin{proposition}
	\label{prop:validJeon}
	The following convex inequalities are valid for $X$:
	\begin{align}
	\frac{y_2^2}{x_2}-x_1&\leq t,\label{eq:jeff1}\\
	\frac{y_1^2}{x_1}-x_2&\leq t, \label{eq:jeff2}\\
	\frac{\left(y_1-y_2\right)^2}{x_1+x_2}&\leq t. \label{eq:jeff3}
	\end{align}  
\end{proposition}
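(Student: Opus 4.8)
The plan is to verify each of the three inequalities directly, exploiting that on $X$ the indicator variables take only the values $0$ and $1$, that every feasible point satisfies $0\le y_i\le x_i$, and that $t\ge (y_1-y_2)^2\ge 0$. Before checking validity I would first record \emph{convexity}: $\nicefrac{y_i^2}{x_i}$ is the closure of the perspective of $y\mapsto y^2$ and hence convex on $[0,1]\times\R_+$, so the left-hand sides of \eqref{eq:jeff1} and \eqref{eq:jeff2} are sums of a convex and a linear function; and $\nicefrac{(y_1-y_2)^2}{x_1+x_2}$ is the composition of the convex perspective function $p$ with the affine map $(x,y)\mapsto(x_1+x_2,\,y_1-y_2)$, hence convex. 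This justifies referring to them as convex inequalities.

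For \eqref{eq:jeff1} I would split on the value of $x_1$. If $x_1=1$, then from $0\le y_2\le x_2\le 1$ we get $\nicefrac{y_2^2}{x_2}\le x_2\le 1=x_1$, so the left-hand side is nonpositive, hence at most $t\ge 0$. If $x_1=0$, then $y_1=0$, so $\nicefrac{y_2^2}{x_2}-x_1=\nicefrac{y_2^2}{x_2}$, which equals $y_2^2=(y_1-y_2)^2\le t$ when $x_2=1$ and equals $\nicefrac{0}{0}=0\le t$ when $x_2=0$. Inequality \eqref{eq:jeff2} then follows by the symmetric argument, exchanging the roles of indices $1$ and $2$.

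For \eqref{eq:jeff3} I would split on whether $x_1+x_2=0$. If $x_1=x_2=0$, then $y_1=y_2=0$ and the left-hand side is $\nicefrac{0}{0}=0\le t$. Otherwise $x_1+x_2\ge 1$, and therefore $\nicefrac{(y_1-y_2)^2}{x_1+x_2}\le(y_1-y_2)^2\le t$. This completes the argument.

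I do not expect a genuine obstacle; the only delicate points are the degenerate cases $x_i=0$, which are handled by the convention $\nicefrac{0}{0}=0$ fixed in the notation, and the observation that one cannot simply pass to the limit $d_1,d_2\to 1$ in the Jeon et al.\ inequalities to conclude: those inequalities are valid for the strictly convex set $\{(x,y,t): y'Ay\le t,\ 0\le y_i\le x_i\}$ with $A=\left[\begin{smallmatrix}d_1&-1\\-1&d_2\end{smallmatrix}\right]$ and $d_1,d_2>1$, which is a \emph{proper subset} of $X$ since $y'Ay\ge(y_1-y_2)^2$. Hence establishing validity on the larger set $X$ really requires the direct case analysis above (equivalently, a closedness/continuity argument that the limiting inequalities persist on $X$, which reduces to the same cases).
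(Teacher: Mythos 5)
Your proof is correct and follows essentially the same route as the paper's: a direct case analysis over the binary values of $x$, using $0\le y_i\le x_i$ and the convention $\nicefrac{0}{0}=0$ (the paper likewise verifies the limiting inequalities directly rather than relying on the limit of the Jeon et al.\ inequalities). Your added convexity check and the caveat about why the limiting argument alone does not suffice are sound but not needed beyond what the paper already records.
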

\rev{For completeness, we verify here the validity of the limiting inequalities directly. The validity of inequality \eqref{eq:jeff1} is easy to see: observe that $\nicefrac{y_2^2}{x_2}\leq 1$ for $(x,y)\in X$; then, for $x_1=0$, \eqref{eq:jeff1} reduces to the perspective formulation for the quadratic constraint $y_2^2\leq t$, and for $x_1=1$ we have $\nicefrac{y_2^2}{x_2}-x_1\leq 0 \leq t$. The validity of inequality \eqref{eq:jeff2} is proven identically. Finally, inequality \eqref{eq:jeff3} is valid since it forces $y_1=y_2$ when $x_1=x_2=0$, and is dominated by the original inequality $(y_1-y_2)^2\leq t$ for other integer values of $x$.}

Inequalities \eqref{eq:jeff1}--\eqref{eq:jeff3} are not sufficient to describe conv($X$) \rev{though}. In the next two sections we describe conv($X$) and give new conic quadratic valid inequalities dominating \eqref{eq:jeff1}--\eqref{eq:jeff3} for $X$.

\section{The unbounded relaxation}
\label{sec:convexHullUnbounded}
In this section we study the unbounded relaxations of $S$ and $X$ 
obtained by dropping the upper bound on the continuous variables:
\begin{align*}
S_U&=\left\{(x,y,t)\in \{0, 1\}^N\times \R_+^{N}\times \R:y'Qy\leq t,\;y_i(1-x_i)=0 \text{ for all }i\in N\right\},\\
X_U&=\left\{(x,y,t)\in \{0,1\}^2\times \R_+^2\times \R: (y_1-y_2)^2\leq t: y_i(1-x_i)=0,\; i=1,2\right\}.\end{align*}
In Section~\ref{sec:optimizationUnbounded} we show that the minimization of a linear function over $S_U$ is equivalent to a submodular minimization problem \rev{and, consequently, solvable in polynomial time}.
In Section~\ref{sec:inequalitiesUnbounded}, we describe $\conv(X_U)$ and in Section~\ref{sec:validUnbounded} we use the results in Section~\ref{sec:inequalitiesUnbounded} to derive valid inequalities for $S_U$.

\subsection{Optimization over $S_U$}
\label{sec:optimizationUnbounded}

We now show that the optimization of a linear function over $S_U$ can be solved in polynomial time under a mild assumption on the objective function. 
Consider the problem 
\begin{align*}
\text{(P)} \ \ \ \min \left \{ a'x+b'y+t: (x,y,t) \in S_U \right  \}, 
\end{align*}
where $Q$ is a positive definite M-matrix and $b\leq 0$. We show that (P) is  a submodular minimization problem.
The positive definiteness assumption on $Q$ ensures that an optimal solution exists. Otherwise, if there is $y \ge 0$ with $y'Qy = 0$, the problem may be unbounded.
The assumption $b\leq 0$ is satisfied in most applications (e.g., see Sections~\ref{subsec:dualNetwork} and \ref{subsec:dense}). If $b > 0$, then $y=0$ in any optimal solution.

\begin{proposition}[Characterization 15 \cite{plemmons1977m}]
	A positive definite M-matrix $Q$ is \emph{inverse-positive}, i.e., its inverse satisfies $Q_{ij}^{-1}\geq 0$ for all $i,j$. 
\end{proposition}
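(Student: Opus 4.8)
The plan is to use the splitting $Q = D - N$ with $D = \diag(Q_{11},\dots,Q_{nn})$ and $N = D - Q$. Positive definiteness gives $Q_{ii} = e_i'Qe_i > 0$, so $D$ is invertible and $D^{-1}\geq 0$; the M-matrix sign pattern $Q_{ij}\leq 0$ for $i\neq j$ gives $N\geq 0$ entrywise (with zero diagonal). Since $Q = D(I - D^{-1}N)$, writing $M := D^{-1}N\geq 0$ we obtain $Q^{-1} = (I-M)^{-1}D^{-1}$, so it suffices to prove $\rho(M) < 1$: then the Neumann series $(I-M)^{-1} = \sum_{k\geq 0}M^k$ converges, each $M^k\geq 0$, and $Q^{-1} = \big(\sum_{k\geq 0}M^k\big)D^{-1}$ is a product of entrywise nonnegative matrices, hence $Q^{-1}\geq 0$.

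To bound $\rho(M)$ I would invoke the Perron--Frobenius theorem for (possibly reducible) nonnegative matrices: $\rho(M)$ is an eigenvalue of $M$ with an eigenvector $v\geq 0$, $v\neq 0$. From $Mv = \rho(M)v$ and $DM = N$ we get $Nv = \rho(M)Dv$, hence
\[
Qv = Dv - Nv = \big(1-\rho(M)\big)Dv .
\]
Left-multiplying by $v'$ gives $v'Qv = \big(1-\rho(M)\big)\,v'Dv$. Since $Q\succ 0$ and $v\neq 0$, the left-hand side is strictly positive, and $v'Dv>0$ because $D$ has positive diagonal and $v\neq 0$; therefore $1-\rho(M)>0$, i.e.\ $\rho(M)<1$, which finishes the proof.

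The only subtle step is applying Perron--Frobenius to an $M$ that need not be irreducible; to avoid this one can perturb, replacing $M$ by $M+\varepsilon E$ with $E$ the all-ones matrix and letting $\varepsilon\downarrow 0$, using continuity of the spectral radius. Alternatively, one obtains a fully self-contained argument without eigenvectors: the $j$-th column of $Q^{-1}$ is the unique minimizer $w^\star$ of the strictly convex quadratic $\tfrac12 w'Qw - e_j'w$, and since $Q_{ik}\leq 0$ for $i\neq k$,
\[
\tfrac12\,|w^\star|'Q\,|w^\star| - e_j'|w^\star| \;\leq\; \tfrac12\,(w^\star)'Qw^\star - e_j'w^\star ,
\]
because $Q_{ik}\big(|w_i^\star||w_k^\star| - w_i^\star w_k^\star\big)\leq 0$ and $|w_j^\star|\geq w_j^\star$; uniqueness of the minimizer then forces $|w^\star| = w^\star\geq 0$, and letting $j$ range over $N$ yields $Q^{-1}\geq 0$. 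I expect the main obstacle to be producing the nonnegative Perron eigenvector cleanly (equivalently, handling the reducible case); the remaining manipulations are routine.
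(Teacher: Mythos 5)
Your proposal is correct, but note that the paper does not prove this proposition at all: it is quoted verbatim as ``Characterization 15'' from Plemmons' survey of M-matrix characterizations and used as a black box. So any proof you give is necessarily a different route. Both of your arguments work. The first is the classical Jacobi-splitting proof: $Q=D(I-M)$ with $M=D^{-1}N\geq 0$, Perron--Frobenius to extract a nonnegative eigenvector $v$ for $\rho(M)$, and the identity $v'Qv=(1-\rho(M))v'Dv$ to force $\rho(M)<1$, after which the Neumann series gives nonnegativity of $Q^{-1}$. One small caution there: the version of Perron--Frobenius for general (possibly reducible) nonnegative matrices already guarantees that $\rho(M)$ is an eigenvalue with a nonnegative eigenvector, and that is what you should invoke; the $\varepsilon$-perturbation detour as sketched only yields $\rho(M)\leq 1$ in the limit, and you would then need an extra step (e.g., $1\notin\sigma(M)$ because $Q=D(I-M)$ is invertible, together with the fact that $\rho(M)$ is itself an eigenvalue of the nonnegative matrix $M$) to recover strictness. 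Your second argument -- identifying the $j$-th column of $Q^{-1}$ as the unique minimizer of $\tfrac12 w'Qw-e_j'w$ and showing the componentwise absolute value does not increase the objective when the off-diagonals are nonpositive -- is fully self-contained, avoids Perron--Frobenius entirely, and is arguably the cleanest proof to include; it is also pleasantly aligned in spirit with the paper's own use of KKT/variational arguments for (P) in Section~3.1.
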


\begin{proposition}
	Problem (P) \rev{is equivalent to a submodular minimization problem and it is, therefore, solvable} in polynomial time.
\end{proposition}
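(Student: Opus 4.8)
The plan is to eliminate the continuous variables $y$ and $t$ from (P) by optimizing over them explicitly for each fixed $x\in\{0,1\}^N$, thereby reducing (P) to a pure pseudo-Boolean minimization problem $\min_{x\in\{0,1\}^N} g(x)$, and then to show that $g$ is submodular. First I would observe that in any optimal solution $t=y'Qy$, so (P) becomes $\min\{a'x + f(x) : x\in\{0,1\}^N\}$ where $f(x)=\min\{b'y + y'Qy : y\ge 0,\ y_i=0 \text{ if } x_i=0\}$. For fixed support $T=\{i:x_i=1\}$, this is an unconstrained-in-sign-looking but nonnegativity-constrained convex quadratic minimization over the variables indexed by $T$; since $Q$ restricted to $T$ is again a positive definite M-matrix (principal submatrices of M-matrices are M-matrices), Characterization~15 applies to the subproblem. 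The key point is that because $Q_T^{-1}\ge 0$ and $b\le 0$, the unconstrained minimizer $y^*=-\tfrac12 Q_T^{-1}b_T$ is already nonnegative, so the nonnegativity constraints are inactive and $f(x) = -\tfrac14 b_T' Q_T^{-1} b_T$ in closed form.

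The heart of the argument is then to prove that $x\mapsto a'x + f(x)$ is submodular, equivalently (since $a'x$ is modular) that the set function $F(T):=-\tfrac14 b_T'Q_T^{-1}b_T$ is submodular in $T$. I would prove this via the decomposition \eqref{eq:quadraticDecomposition}: writing $y'Qy=\sum_i(\sum_j Q_{ij})y_i^2 - \sum_{i<j}Q_{ij}(y_i-y_j)^2$ with $-Q_{ij}\ge 0$, one sees $f(x)$ as a minimum of a separable-plus-``pairwise-difference'' convex function subject to the indicator constraints, and each such building block contributes a submodular term to the value function; submodularity is preserved under nonnegative combinations. Alternatively, and perhaps more cleanly, one can appeal to the continuous/parametric viewpoint: the function $(x,y)\mapsto y'Qy$ together with the constraints $y_i(1-x_i)=0$ defines, after partial minimization in $y$, a function of $x$ whose submodularity follows from the sign pattern of $Q$ — this is exactly the ``continuous'' analogue of the classical fact that $x'Qx$ is submodular when $Q$ has nonpositive off-diagonals \citep{Nemhauser1978}. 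The main obstacle I anticipate is handling the partial minimization rigorously: one must argue that minimizing out $y$ commutes with the lattice structure on $x$, i.e., that the closed-form value $F(T)$ inherits submodularity rather than merely that each fixed-$y$ slice is submodular in $x$; establishing the exchange inequality $F(T\cup\{i\})+F(T\cup\{j\}) \ge F(T\cup\{i,j\}) + F(T)$ directly from the inverse-positivity of $Q$ may require a careful Schur-complement computation.

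Once submodularity of $g(x)=a'x+f(x)$ is established, polynomial-time solvability follows from Grötschel–Lovász–Schrijver / Schrijver's combinatorial algorithms for submodular function minimization, provided $g$ can be evaluated in polynomial time — which it can, since computing $F(T)$ amounts to inverting a submatrix of $Q$ and one matrix–vector product. I would also note that once an optimal $x^*$ is known, an optimal $(y^*,t^*)$ is recovered by the closed-form expressions above, completing the reduction. A final remark worth making is that the positive-definiteness hypothesis is used twice — to guarantee the subproblems are bounded (so $f$ is finite-valued) and so that $Q_T$ is invertible for every $T$ — and the hypothesis $b\le 0$ is exactly what makes the nonnegativity constraints on $y$ inactive, which is what allows the clean closed form that drives the submodularity proof.
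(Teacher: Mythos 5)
Your reduction of (P) to the set-function problem $\min_{T\subseteq N}\, a(T) - \tfrac14 b_T'Q_T^{-1}b_T$ is correct and is exactly the paper's first step: positive definiteness gives attainment and invertibility of each $Q_T$, and inverse-positivity of $Q_T$ together with $b\le 0$ makes the unconstrained stationary point $y_T^*=-\tfrac12 Q_T^{-1}b_T$ nonnegative, so the KKT closed form is legitimate. The problem is that you then stop at the statement that actually carries the proposition: the submodularity of $F(T)=-\tfrac14 b_T'Q_T^{-1}b_T$. Neither of your two sketches establishes it. The first (pushing the decomposition \eqref{eq:quadraticDecomposition} through the value function) fails as stated because the minimum of a sum is not the sum of the minima, so you cannot conclude that ``each building block contributes a submodular term'' to $F$. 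The second (the ``continuous analogue'' of submodularity of $x'Qx$) is only an analogy until you prove that partial minimization over $y$ preserves submodularity in $x$ --- which is precisely the exchange inequality you yourself flag as the unresolved obstacle. The paper closes this gap with a genuinely different and nontrivial argument: it writes $Q_T=I_T-P_T$ with $P_T\ge 0$ of spectral radius less than one, expands $Q_T^{-1}=\sum_{\ell\ge 0}P_T^\ell$ as a Neumann series, and proves by induction on $\ell$ that each entry function $T\mapsto (P_T^\ell)_{ij}$ (extended by zero) is monotone and supermodular; supermodularity of $\theta_{ij}$ then follows by summation, and $b\le 0$ makes the coefficients $b_ib_j$ nonnegative so that $\tfrac14\sum_{i,j}b_ib_j\theta_{ij}(T)$ is supermodular. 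Without this (or an equivalent) argument, your proposal asserts rather than proves the key claim.

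That said, your second route can be made rigorous and would give an arguably cleaner proof: the function $a'x+b'y+y'Qy$ is submodular on the product lattice $\{0,1\}^N\times\R_+^N$ (its mixed second partials in $y$ are $2Q_{ij}\le 0$, and $x$ and $y$ do not interact in the objective), and the feasible set $\{(x,y): y\ge 0,\ y_i(1-x_i)=0\}$ is a sublattice of that product; Topkis's theorem on preservation of submodularity under minimization over a sublattice then yields submodularity of the value function directly, without any Neumann series or Schur complements. But you would need to state and verify these two facts explicitly --- in particular the sublattice property of the constraint set --- rather than gesture at the classical binary result. As written, the proposal has a genuine gap at its central step.
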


\begin{proof}
	We assume that $a\geq 0$ (otherwise $x=1$ in any optimal solution) and that an optimal solution exists. Given an optimal solution $(x^*,y^*)$ to (P), let $T=\left\{i\in N: y_i^*>0\right\}$, $b_T$ the subvector of $b$ induced by $T$, and by $Q_T$ the submatrix of $Q$ induced by $T$. Then, from KKT conditions, we find
	$
	b_T+2Q_T y_T=0 \Leftrightarrow y_T=-\nicefrac{Q_T^{-1}b_T}{2} \cdot
	$
	Thus, an optimal solution satisfies
	$b'y^*+{y^*}'Qy^*=-\frac{b_T'Q_T^{-1}b_T}{4} \cdot $
	
	\rev{Consequently,} defining $\theta_{ij}:2^N\to \R$  for $i,j\in N$  as 
    $
	\theta_{ij}(T)= (Q_T^{-1})_{ij} \text{ if }i, j\in T \text{ and }
	0  \text{ o.w.,}
	$ observe that (P) is equivalent to the binary minimization problem 
	$$\min_{T\subseteq N} \ \ a(T)-\frac{1}{4}\sum_{i\in N}\sum_{j\in N}b_ib_j \theta_{ij}(T) \cdot$$
	
	Note that since $Q_T$ is a positive definite $M$-matrix for any $T\subseteq N$, $Q_T= \mu I_T-P_T$, where $P_T$ is a nonnegative matrix and the largest eigenvalue of $P_T$ is less than $\mu$. 
	By scaling, we may assume that $\mu=1$. 
	Moreover,  $Q_T^{-1}=(I-P_T)^{-1}=\sum_{\ell=0}^\infty P_T^{\ell}$ \cite[e.g.][]{Young81}. 
	For $\ell\in \mathbb{Z}_+$ and all $i,j\in N$ let $ 
	\bar \theta_{ij}^\ell(T)=(P_T^\ell)_{ij} \text{ if }i,j\in T, \text{ and }
	0  \text{ o.w.}
	$
	Note that $\theta_{ij}(T)=\sum_{\ell=0}^\infty \bar \theta_{ij}^\ell(T)$. Finally, define for $k\in N$ and $T\subseteq N\setminus\{k\}$ the \rev{increment} function $\rho_{ij}^\ell(k,T)= \bar \theta_{ij}^\ell(T\cup\{k\})-\bar \theta_{ij}^\ell(T)$.
	\begin{claim}
		For all $i,j\in N$ and $\ell \in \Z_+$, $\bar \theta_{ij}^\ell$ is a monotone 
		supermodular function. 
	\end{claim}
	\begin{proof}
		The claim is proved by induction on $\ell$. 
		
		$\bullet$ Base case, $\ell=0$: Let $k\in N$ and $T\subseteq N\setminus \{k\}$. Note that $P_T^0=I_T$. Thus $\rho_{kk}^0(k,T)=1$, and $\rho_{ij}^0(k,T)=0$ for all cases except $i=j=k$. Thus, the marginal contributions are constant and $\bar \theta_{ij}^0$ is supermodular. Monotonicity can be checked easily. 
		
		$\bullet$ Induction step: Suppose $\bar \theta_{ij}^\ell$ is supermodular and monotone for all $i,j\in N$. 
		Observe that $\bar \theta_{ij}^{\ell+1}(T)=\sum_{t\in N}\bar \theta_{it}^{\ell}(T)P_{tj}$ if $i,j\in T$ and $\bar \theta_{ij}^{\ell+1}(T)=0$ otherwise. Monotonocity of $\bar \theta_{ij}^{\ell+1}$ follows immediately from the monotonicity of the functions $\bar \theta_{it}^{\ell}$. Now let $k\in N$ and $T_1\subseteq T_2\subseteq N\setminus\{k\}$. To prove  supermodularity, we check that $\rho_{ij}^{\ell+1}(k,T_2)-\rho_{ij}^{\ell+1}(k,T_1)\geq 0$ by considering all cases: 
		\begin{description}
			\item[ $k\not\in \{i,j\}$] If $\{i,j\}\subseteq T_1$ then $\rho_{ij}^{\ell+1}(k,T_2)-\rho_{ij}^{\ell+1}(k,T_1)=\sum_{t\in N}(\rho_{it}^{\ell}(k,T_2)-\rho_{it}^{\ell}(k,T_1))P_{tj}\geq 0$ by supermodularity of functions $\bar \theta_{it}^\ell$; if $\{i,j\}\not\subseteq T_1$ and $\{i,j\}\subseteq T_2$ then $\rho_{ij}^{\ell+1}(k,T_2)-\rho_{ij}^{\ell+1}(k,T_1)=\rho_{ij}^{\ell+1}(k,T_2)\geq 0$ by monotonicity; finally, if $\{i,j\}\not\subseteq T_2$ then $\rho_{ij}^{\ell+1}(k,T_2)-\rho_{ij}^{\ell+1}(k,T_1)=0$.
			
			\item[$k=i$] If $j\in T_1$ then $\rho_{kj}^{\ell+1}(k,T_2)-\rho_{kj}^{\ell+1}(k,T_1)=\sum_{t\in N}(\rho_{kt}^{\ell}(k,T_2)-\rho_{kt}^{\ell}(k,T_1))P_{tj}\geq 0$ by supermodularity of functions $\bar \theta_{kt}^\ell$; if $j\not\in T_1$ and $j\in T_2$ then $\rho_{kj}^{\ell+1}(k,T_2)-\rho_{kj}^{\ell+1}(k,T_1)=\bar\theta_{kj}^{\ell+1}(T_2\cup\{k\})\geq 0$; finally, if $j\not\in T_2$ then $\rho_{kj}^{\ell+1}(k,T_2)-\rho_{kj}^{\ell+1}(k,T_1)=0$. The case $k=j$ is identical.
		\end{description}
	\end{proof}
	As $\theta_{ij}(T)=\sum_{\ell=0}^\infty \bar \theta_{ij}^\ell(T)$ is a sum of supermodular functions, it is supermodular. Consequently, $\nicefrac{1}{4}\sum_{i\in N}\sum_{j\in N}b_ib_j \theta_{ij}(T)$ is a supermodular function and (P) is a submodular minimization problem, solvable \rev{with a strongly polynomial number of calls to a value oracle} \cite[e.g.][]{Orlin2009}. \rev{Evaluating the submodular function for a given set $T$, i.e., computing $a(T)-\nicefrac{b_T'Q_T^{-1}b_T}{4}$, requires only matrix multiplication and inversion, and can be done in strongly polynomial time. Therefore (P) is solvable in strongly polynomial time.} 
\end{proof}

\subsection{Convex hull of $X_U$}
\label{sec:inequalitiesUnbounded}

Consider the function $f:[0,1]^2\times \R_+^2\to \R_+$ defined as
\begin{equation}
\label{eq:defF}f(x,y)=\begin{cases}\frac{(y_1-y_2)^2}{x_1}& \text{if }y_1\geq y_2\\\frac{(y_2-y_1)^2}{x_2}& \text{if }y_1\leq y_2\end{cases}
\end{equation}
and the corresponding nonlinear inequality \begin{equation}
\label{eq:unboundedCut}
f(x,y)\leq t.
\end{equation}

\begin{remark} 
	Observe that that inequality \eqref{eq:unboundedCut} dominates inequality  \eqref{eq:jeff3} since 
	\begin{equation*}
	\frac{(y_1-y_2)^2}{x_1+x_2}\leq\frac{(y_1-y_2)^2}{\max\{x_1,x_2\}}\leq f(x,y).
	\end{equation*}
	Inequalities \eqref{eq:jeff1}--\eqref{eq:jeff2} are not valid for the unbounded relaxation \rev{as the conditions $\nicefrac{y_i^2}{x_i}\leq 1$ are not satisfied by all feasible points in $X_U$. For example, feasible points with $x_1=x_2=1$, $y_1=y_2>1$ and $t=0$ are cut off by \eqref{eq:jeff1}--\eqref{eq:jeff2}.}
\end{remark}

\begin{proposition}
	Inequality \eqref{eq:unboundedCut} 
	is valid for $X_U$.
\end{proposition}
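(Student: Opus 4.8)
The plan is to verify validity by checking that for every point $(x,y,t)\in X_U$ we have $f(x,y)\le t$, treating the four integer values of $x\in\{0,1\}^2$ separately since the constraints $y_i(1-x_i)=0$ force $y_i=0$ whenever $x_i=0$. First I would observe that $f$ is symmetric in the index pairs $(x_1,y_1)$ and $(x_2,y_2)$, so it suffices to handle the cases $x=(0,0)$, $x=(1,0)$, and $x=(1,1)$, with $x=(0,1)$ following by symmetry.

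In the case $x=(1,1)$, both denominators in \eqref{eq:defF} equal $1$, so $f(x,y)=(y_1-y_2)^2$ regardless of the sign of $y_1-y_2$, and the defining inequality $(y_1-y_2)^2\le t$ of $X_U$ gives the result directly. In the case $x=(0,0)$, the constraints $y_i(1-x_i)=0$ force $y_1=y_2=0$, so $f(x,y)=0\le t$ since $t\ge (y_1-y_2)^2\ge 0$ (here one uses the convention $\nicefrac{0}{0}=0$ from the notation paragraph). In the case $x=(1,0)$, we have $y_2=0$, so $y_1\ge y_2$ and the first branch of \eqref{eq:defF} applies, giving $f(x,y)=\nicefrac{(y_1-0)^2}{1}=y_1^2=(y_1-y_2)^2\le t$.

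I expect no real obstacle here; the only subtlety is bookkeeping with the division-by-zero convention, namely that whenever a denominator $x_i$ vanishes the corresponding numerator $(y_1-y_2)^2$ vanishes as well (because the active branch of $f$ is chosen so that $x_i=0$ coincides with $y_i$ being the larger of the two, forcing $y_1=y_2$), so that $f$ is finite and in fact zero on those faces. An alternative, cleaner phrasing would be to note that $f(x,y)\le p(x_1,y_1-y_2)+p(x_2,y_1-y_2)$ is false in general, so the branch-wise argument above is the natural route; one could also remark that $f(x,y)=\nicefrac{(y_1-y_2)^2}{x_i}$ with $i$ chosen as the index of the larger $y$-coordinate, and that this index always has $x_i=1$ on integer points unless $y_1=y_2$, which again collapses $f$ to $0$. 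Either way the verification reduces to the three elementary cases above.
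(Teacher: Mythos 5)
Your proof is correct and follows essentially the same route as the paper's: a case analysis over the integer values of $x$, using that $y_i=0$ whenever $x_i=0$ so that the active branch of $f$ collapses to the original quadratic (or to $0$ when $x=(0,0)$). The extra remarks on the division-by-zero convention and the symmetry reduction are harmless additions but not needed beyond what the paper already does.
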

\begin{proof}
	There are four cases to consider. If $x_1=x_2=1$, then $f(x,y)$ reduces to the original quadratic inequality $(y_1-y_2)^2$, thus the inequality is valid. If $x_1=x_2=0$, then the points in $X_U$ satisfy $y_1=y_2=0$ and $t\geq 0$; since $f(0,0)=0$, none of these points are cut off by \eqref{eq:unboundedCut}. If $x_1=1$ and $x_2=0$, then $y_2=0$ in any point in $X_U$ and, in particular, $y_1\geq y_2$; thus $f(x,y)$ reduces to the original inequality. The case where $x_1=0$ and $x_2=1$ is similar. \
\end{proof}

\rev{Observe that function $f$ is a piecewise nonlinear function, where each piece is conic quadratic representable. However, the pieces are not valid outside of the region where they are defined, e.g., $(y_1-y_2)^2\leq tx_1$ is invalid when $y_2>y_1$ as it cuts off feasible points with $x_1=y_1=0$ and $y_2>0$. Thus, inequality \eqref{eq:unboundedCut} is not equivalent to the system given by $(y_1-y_2)^2\leq tx_i$, $i=1,2$. Nevertheless, as shown in Proposition~\ref{prop:convexityF} below, \eqref{eq:unboundedCut} is a convex inequality. }

\begin{proposition}
	\label{prop:convexityF}
	The function $f$ is convex on its domain. 
\end{proposition}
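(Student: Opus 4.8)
The plan is to prove convexity of $f$ on its domain $[0,1]^2\times\R_+^2$ directly, exploiting the fact that $f$ is built by gluing together two perspective functions along the diagonal $\{y_1=y_2\}$. Recall the closure of the perspective function $p(x,y)=\nicefrac{y^2}{x}$ is convex; more generally, for a convex quadratic $q$ the map $(x,y)\mapsto \nicefrac{q(y)}{x}$ is convex jointly in $(x,y)$ when $x>0$, and extends convexly to its closure. The first observation is that each piece of $f$ is of this form: on the half-space $\{y_1\ge y_2\}$ we have $f(x,y)=g_1(x,y):=\nicefrac{(y_1-y_2)^2}{x_1}$, which is a composition of $p$ with the linear maps $(x,y)\mapsto x_1$ and $(x,y)\mapsto y_1-y_2$, hence convex on $[0,1]^2\times\R_+^2$; symmetrically $g_2(x,y):=\nicefrac{(y_1-y_2)^2}{x_2}$ is convex. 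So $f$ is a pointwise "selection" of two globally convex functions, where the selection is governed by the sign of $y_1-y_2$.

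The key step is the following gluing lemma: if $g_1,g_2$ are convex functions on a convex set $D$, $H$ is a hyperplane splitting $D$ into closed half-spaces $D^+,D^-$, and $g_1=g_2$ on $H\cap D$, then the function equal to $g_1$ on $D^+$ and $g_2$ on $D^-$ is convex on $D$ \emph{provided} it is finite and the two pieces "agree enough" at the interface — more precisely, one standard sufficient condition is that $g_1\le g_2$ on $D^+$ and $g_2\le g_1$ on $D^-$, i.e. the active piece is the \emph{larger} one, in which case $f=\max\{g_1\cdot\mathbbm{1}_{D^+}+\ldots\}$; but here the active piece is actually the \emph{smaller} one (on $\{y_1\ge y_2, x_1\ge x_2\}$ we pick the denominator $x_1\ge x_2$, making the fraction smaller), so $f=\min\{g_1,g_2\}$ pointwise. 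A pointwise minimum of convex functions is generally not convex, so this cannot be the whole story; what saves us is that the two regions are separated by the hyperplane $y_1=y_2$ on which the \emph{common value is zero}, $g_1=g_2=0$ there. I would therefore verify convexity by checking the definition on a segment: take $(x^a,y^a),(x^b,y^b)$ in the domain and $\lambda\in[0,1]$; if both endpoints lie in the same region $\{y_1\ge y_2\}$ or $\{y_1\le y_2\}$, convexity is immediate from convexity of $g_1$ (resp. $g_2$); the only nontrivial case is when the segment crosses the interface, say $y_1^a\ge y_2^a$ and $y_1^b\le y_2^b$. Then there is a breakpoint $\bar\lambda$ where $\bar y_1=\bar y_2$, hence $f(\bar x,\bar y)=0$, and I would apply convexity of $g_1$ on the sub-segment $[0,\bar\lambda]$ and of $g_2$ on $[\bar\lambda,1]$ together with the fact that $f$ vanishes at the breakpoint to chain the two inequalities; concretely, $f$ restricted to the segment is convex on each of the two sub-intervals, is continuous, and one of its one-sided derivatives at $\bar\lambda$ is $\le$ the other (this is where the zero common value and the specific choice of the smaller denominator enter), giving convexity of the restriction on the whole interval.

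Alternatively — and this may be cleaner to write — I would represent $f$ in closed form as $f(x,y)=\max\Big\{\tfrac{(y_1-y_2)_+^2}{x_1}+\tfrac{(y_2-y_1)_+^2}{x_2},\ 0\Big\}$ is wrong, so instead note $f(x,y)=\tfrac{(y_1-y_2)_+^2}{x_1}+\tfrac{(y_2-y_1)_+^2}{x_2}$ where $(\cdot)_+=\max\{\cdot,0\}$: indeed when $y_1\ge y_2$ the second term vanishes and we recover $\nicefrac{(y_1-y_2)^2}{x_1}$, and symmetrically. Now $(y_1-y_2)_+$ is a nonnegative convex function of $y$, so $(y_1-y_2)_+^2$ is convex and nondecreasing in that nonnegative argument; and $(u,x_1)\mapsto \nicefrac{u^2}{x_1}$ is convex and nondecreasing in $u\ge 0$, so the composition $(x,y)\mapsto \nicefrac{(y_1-y_2)_+^2}{x_1}$ is convex by the composition rule for convex nondecreasing outer functions. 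The same holds for the second summand, and $f$, being a sum of two convex functions, is convex. I should double-check the monotone-composition argument at the boundary $u=0$ using the division convention $\nicefrac{0}{0}=0$ and lower semicontinuity, but this is routine.

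The main obstacle I anticipate is handling the interface rigorously: a pointwise $\min$ of convex functions is not convex in general, so the proof genuinely relies on the interface being the locus where both pieces equal zero, and I must make sure the chosen form of $f$ makes this transparent. The sum-of-two-perspective-terms representation $f=\nicefrac{(y_1-y_2)_+^2}{x_1}+\nicefrac{(y_2-y_1)_+^2}{x_2}$ sidesteps the interface entirely by absorbing the case split into the $(\cdot)_+$ operators, so I would lead with that identity, prove it by the two-case check, and then invoke the standard convexity of the perspective composed with a convex nondecreasing argument; the rest is bookkeeping with the division-by-zero convention on the boundary of the domain.
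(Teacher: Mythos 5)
Your final argument is correct, but it proceeds by a genuinely different route than the paper. The paper verifies the convexity inequality directly on a segment whose endpoints lie on opposite sides of the interface $\{y_1=y_2\}$: it introduces the auxiliary weight $\gamma=\lambda-(1-\lambda)\frac{\bar y_1-\bar y_2}{\hat y_2-\hat y_1}$, observes that $0\le\gamma\le\lambda$, that $y_2^*-y_1^*=\gamma(\hat y_2-\hat y_1)$ and $\gamma\hat x_2\le x_2^*$, and chains the resulting bounds; this is essentially a rigorous version of the segment-crossing argument you only sketch in your second paragraph. Your preferred route instead rewrites $f$ globally as the sum $\nicefrac{u_+^2}{x_1}+\nicefrac{v_+^2}{x_2}$ with $u=y_1-y_2$, $v=y_2-y_1$ and $w_+=\max\{w,0\}$ --- an identity that does hold on all of $[0,1]^2\times\R_+^2$ under the paper's convention $\nicefrac{0}{0}=0$, since the inactive term always has numerator zero --- and then invokes the composition rule: the closed perspective $(u,x)\mapsto\nicefrac{u^2}{x}$ is jointly convex on $\R\times\R_+$ and nondecreasing in $u$ for $u\ge 0$, and $(y_1-y_2)_+$ is convex and nonnegative, so each summand is convex and hence so is $f$. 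This is clean, avoids the interface analysis entirely, and has the side benefit of exhibiting $f$ as a sum of two conic-quadratic-representable terms. Two caveats on your exposition: the claim in your second paragraph that $f=\min\{g_1,g_2\}$ pointwise is false (on $\{y_1\ge y_2\}$ the function uses denominator $x_1$ regardless of whether $x_1\ge x_2$, so $f$ is neither the pointwise min nor max of the two global pieces); and the remark that the one-sided derivative comparison at the breakpoint relies on ``the specific choice of the smaller denominator'' is off --- what makes the glued segment argument work is only that $f\ge 0$ everywhere and $f=0$ on the interface, forcing the left derivative at the breakpoint to be $\le 0\le$ the right derivative. Neither slip affects your final sum-of-perspectives proof, which is the one you should write up.
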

\begin{proof}
	Let $(\bar{x},\bar{y}),(\hat{x},\hat{y})\in [0,1]^2\times \R_+^2$ and let $(x^*,y^*)=(1-\lambda)(\bar{x},\bar{y})+\lambda (\hat{x},\hat{y})$ 
	for $0\leq \lambda\leq 1$
	be a convex combination of $(\bar{x},\bar{y})$ and $(\hat{x},\hat{y})$. We need to prove that 
	\begin{equation}
	\label{eq:convexF}
	f(x^*,y^*)\leq (1-\lambda)f(\bar{x},\bar{y}) + \lambda f(\hat{x},\hat{y}).
	\end{equation}
	If $\bar{y}_1\geq \bar{y}_2$ and $\hat{y}_1\geq \hat{y}_2$, or $\bar{y}_1\leq \bar{y}_2$ and $\hat{y}_1\leq \hat{y}_2$, inequality \eqref{eq:convexF} holds by convexity of the individual functions in the definition of $f$. Otherwise, assume, without loss of generality, that $\bar{y}_1\geq \bar{y}_2$, $\hat{y}_1\leq \hat{y}_2$, and $y_1^*\leq y_2^*$. 
	Letting $\gamma=\lambda -(1-\lambda)\frac{\bar{y}_1-\bar{y}_2}{\hat{y}_2-\hat{y}_1}$, observe that\begin{itemize}
		\item $\gamma\leq \lambda \leq 1$.
		\item $\gamma\geq 0$, which is equivalent to $y_2^*-y_1^*\geq 0$. 
		\item $y_2^*-y_1^*=\gamma(\hat{y}_2-\hat{y}_1)$.
		\item $\gamma \hat{x}_2\leq \lambda \hat{x}_2\leq x_2^*$. 
	\end{itemize}    
	Then, we find
	\begin{align*}f(x^*,y^*)=\frac{(y_2^*-y_1^*)^2}{x_2^*}\leq \frac{(y_2^*-y_1^*)^2}{\gamma \hat{x}_2}=&\gamma\frac{(\hat{y}_2-\hat{y}_1)^2}{ \hat{x}_2}
	\leq  \lambda f(\hat{x},\hat{y})+(1-\lambda)f(\bar{x},\bar{y}). \ \ \  
	\end{align*}
\end{proof}

\rev{A consequence of Proposition~\ref{prop:convexityF} is that the convex inequality \eqref{eq:unboundedCut} can be implemented (with off-the-shelf solvers) using subgradient inequalities as for a subgradient $\xi \in \partial f(\bar{x},\bar{y})$ at a given point 
	$(\bar{x},\bar{y})$, we have
	$f(\bar{x},\bar{y})+\xi'(x-\bar{x},y-\bar{y})\leq f(x,y),$ for all points $(x,y)$ in the domain of the convex function $f$. In particular, the linear cuts
	\begin{equation}
	\label{eq:subgradientCut}
	f(\bar{x},\bar{y})+\xi'(x-\bar{x},y-\bar{y})\leq t \text{ for } \xi \in \partial f(\bar{x},\bar y)
	\end{equation}
	provide an outer-approximation of $f(x,y) \le t$ at $(\bar{x},\bar{y})$ and are valid everywhere on the domain.
A subgradient $\xi$ can be found simply by taking the gradient of the relevant piece of the function at $(\bar{x},\bar{y})$. In particular, for $\bar y_1\geq \bar y_2$ and $\bar x_1>0$, a subgradient inequality is
	\begin{equation}
	\label{eq:subgradientUnbounded}-\left(\frac{\bar y_1-\bar y_2}{\bar x_1}\right)^2x_1+2\left(\frac{\bar y_1-\bar y_2}{\bar x_1}\right)(y_1-y_2)\leq t.\end{equation}
The process outlined here to find subgradient cuts \eqref{eq:subgradientCut} for $f$ can be utilized for any convex piecewise nonlinear function, and will be used for other functions in the rest of the paper. 
Convex piecewise nonlinear functions also arise in strong formulations for mixed-integer conic quadratic optimization \cite{atamturk2017polymatroid}, 
and subgradient linear cuts for such functions were recently used in the context of the pooling problem \cite{luedtke2018strong}.}

As Theorem~\ref{theo:convexHullUnbounded} below states, inequality \eqref{eq:unboundedCut} and bound constraints for the binary variables describe the convex hull of $X_U$.
\begin{theorem}[Convex hull of $X_U$]
	\label{theo:convexHullUnbounded}
	$$\text{conv}(X_U)=\left\{(x,y,t)\in [0,1]^2\times \R_+^2\times \R: f(x,y)\leq t\right\}.$$
\end{theorem}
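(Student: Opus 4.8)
The plan is to prove the two inclusions separately. The inclusion $\text{conv}(X_U) \subseteq \{(x,y,t) : f(x,y) \le t\}$ is immediate: the right-hand side is a convex set (Proposition~\ref{prop:convexityF}) containing $X_U$ (the validity proposition just proved), hence it contains $\text{conv}(X_U)$. The substance is the reverse inclusion, for which I would take an arbitrary point $(\bar x, \bar y, \bar t)$ with $(\bar x, \bar y) \in [0,1]^2 \times \R_+^2$ and $f(\bar x, \bar y) \le \bar t$, and exhibit it as a convex combination of points of $X_U$. Since $t$ appears only through the constraint $f(x,y) \le t$ and larger $t$ is always feasible, it suffices to handle $\bar t = f(\bar x, \bar y)$; a point with larger $t$ is then a convex combination of points in $X_U$ obtained by perturbing $t$ upward (or one can simply note the recession direction $(0,0,0,0,1) \in X_U^\infty$).

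The core construction: assume WLOG $\bar y_1 \ge \bar y_2$, so $f(\bar x, \bar y) = (\bar y_1 - \bar y_2)^2 / \bar x_1$ and we may assume $\bar x_1 > 0$ (if $\bar x_1 = 0$ then $\bar y_1 = \bar y_2 = 0$ by the division convention forcing $f = \infty$ unless the numerator vanishes, and the point is trivially handled, e.g.\ as a combination of $(0,0,0,0,0)$ and $(0,1,0,0,0)$ when $\bar x_2 > 0$). I would write $(\bar x, \bar y, \bar t)$ as a convex combination of two points: one with $x_1 = 1$ that "uses" the quadratic constraint, and one with $x_1 = 0$ (hence $y_1 = 0$) that absorbs the remaining mass. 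Concretely, with $\lambda = \bar x_1$, set the first point to be $(1, \bar x_2/\lambda', \bar y_1/\lambda, \ldots)$-type coordinates scaled so that $y_1/x_1$ and the difference $y_1 - y_2$ behave correctly — the natural choice is the point $p_1 = (1, \min\{\bar x_2/\bar x_1, 1\}, \bar y_1 / \bar x_1, \bar y_2/\bar x_1, (\bar y_1 - \bar y_2)^2/\bar x_1^2)$ scaled appropriately, and $p_2$ with $x_1 = 0$. One checks $p_1 \in X_U$ because $(\,y_1 - y_2)^2 = t x_1$ holds with equality there and $y_2(1-x_2) = 0$ can be arranged, and $p_2 \in X_U$ because $y_1 = 0$ there. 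The weights are chosen so the $x_1$-coordinate and the $t$-coordinate reproduce $\bar x_1$ and $\bar t$; the $y$ and $x_2$ coordinates then follow by a short computation. I would organize this by cases according to whether $\bar x_2 \le \bar x_1$ or $\bar x_2 > \bar x_1$, since the relative sizes determine how the $x_2$-mass is split between the two (or possibly three) extreme points.

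The main obstacle I anticipate is bookkeeping for the second continuous variable $y_2$ and the bound $x_2 \le 1$ simultaneously: scaling up $y_1$ by $1/\bar x_1$ in the first point also scales $y_2$, and one must ensure the resulting $y_2/x_2$ ratio stays consistent with a genuine $X_U$ point (recall $X_U$ only requires $y_i(1-x_i) = 0$, not $y_i \le x_i$, so $y_2$ can be large as long as $x_2 = 1$ — this is exactly the freedom noted in the Remark). Handling the degenerate boundary cases cleanly ($\bar x_1 = 0$, $\bar x_2 = 0$, $\bar y_1 = \bar y_2$, or the combination) is where sign conventions for division by zero must be invoked carefully. Once the explicit convex combination is written down, verifying membership of each extreme point in $X_U$ and that the weighted coordinates sum to $(\bar x, \bar y, \bar t)$ is routine algebra. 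An alternative, possibly cleaner route is to verify that every supporting hyperplane of the right-hand set is valid for $X_U$ by checking the subgradient inequalities \eqref{eq:subgradientCut}–\eqref{eq:subgradientUnbounded} against integer points — but the direct convex-combination argument is more transparent and I would present that.
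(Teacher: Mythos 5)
Your forward inclusion is fine, and your overall plan (show both inclusions, with the substance in RHS $\subseteq \conv(X_U)$) is the natural one. But the reverse inclusion, as you have framed it, has a genuine gap: the right-hand set is the \emph{closed} convex hull of $X_U$, not the convex hull itself, so there are points satisfying $f(x,y)\le t$ that are not finite convex combinations of points of $X_U$, and no amount of bookkeeping will produce one. A concrete example is $(x_1,x_2,y_1,y_2,t)=(1,0,1,1,0)$: here $f=0\le t$, yet any convex combination of $X_U$-points with $x_2$-coordinate averaging to $0$ must place all weight on points with $x_2=0$, hence $y_2=0$ throughout, so $y_2=1$ is unreachable. (The point is the limit of $(1,\epsilon,1,1,0)\in\conv(X_U)$ as $\epsilon\downarrow 0$, obtained by putting weight $\epsilon$ on $(1,1,\nicefrac{1}{\epsilon},\nicefrac{1}{\epsilon},0)$ --- this uses exactly the unboundedness of $y$ you mention.) Note this example has $\bar x_1=1>0$, so it sits squarely inside your ``main construction'' regime, where your candidate $p_1=(1,\min\{\bar x_2/\bar x_1,1\},\ldots)=(1,0,1,1,0)$ violates $y_2(1-x_2)=0$ and is not in $X_U$. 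Your treatment of the degenerate case is also off: when $\bar x_1=\bar x_2=0$ the convention $\nicefrac{0}{0}=0$ only forces $\bar y_1=\bar y_2$, not $\bar y_1=\bar y_2=0$, and the points $(0,0,c,c,0)$ with $c>0$ are likewise in the right-hand set but not in $\conv(X_U)$.

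The paper avoids this entirely by taking a dual route: it minimizes an arbitrary linear objective $a'x+b'y+ct$ over $X_U$ and over the candidate set, and shows case by case (on the signs of $c$, $b_1$, $b_2$) that either both problems are unbounded or they share an optimal solution integral in $x$. Equality of optimal values for every linear objective identifies the two closed convex sets without ever exhibiting convex combinations, which is exactly what is needed given that $\conv(X_U)$ fails to be closed. Your closing remark about checking supporting hyperplanes is the germ of this working argument; if you want to salvage the primal approach instead, you must either restrict to a dense subset (e.g.\ $\bar x>0$ and strict inequalities in the coupling conditions) and then pass to closures explicitly, or accept that the statement to be proved is $\overline{\conv}(X_U)$ rather than $\conv(X_U)$.
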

\begin{proof}
	Consider the optimization problems
	\begin{align*}
	(P_0)\ \ \ \ \ \ \ \ \ &\min_{(x,y,t)\in X_U} a'x+b'y+ct;\\
	(P_1)\ \ \ \ \ \ \ \ \ &\min_{(x,y,t)\in [0,1]^2\times \R_+^2\times \R} a'x+b'y+ct\text{ s.t. } f(x,y)\leq t.
	\end{align*}
	To prove the result we show that for any value of $a,b,c$, either $(P_0)$ and $(P_1)$ are both unbounded, or there exists a solution integral in $x$ that is optimal for both problems.
	If $c<0$, then $(P_0)$ and $(P_1)$ are both unbounded, and if $c=0$ then $(P_1)$ corresponds to an optimization problem over an integral polyhedron and it is easily checked that $(P_0)$ and $(P_1)$ are equivalent. Thus, the interesting case is $c>0$ or, by scaling, $c=1$. 
	Note that $t=(y_1-y_2)^2$ in any optimal solution of $(P_0)$, and $t=f(x,y)$ in any optimal solution of $(P_1)$. If $b_1, b_2\geq 0$, then $y_1=y_2=0$ is optimal with
	corresponding integer $x$ optimal for both $(P_0)$ and $(P_1)$.
	Moreover, if $b_1+b_2<0$, then both problems are unbounded: $x_1=x_2=1$, $y_1=y_2=\lambda$ is feasible for any $\lambda > 0$ for both problems. Thus, one needs to consider only the case where $b_1+b_2 \ge 0$ and $b_1 < 0$ or $b_2 < 0$. Without loss of generality, let $b_1<0$ and $b_2>0$. 

	\vspace{1mm}
	\noindent
	\textbf{Optimal solutions of $(P_0)$}. There exists an optimal solution with $y_2=0$ (if $0<y_2 \leq y_1$, subtracting $\epsilon>0$ from both $y_1$ and $y_2$ does not increase the objective  -- and if $y_2>y_1$, then swapping the values of $y_1$ and $y_2$ reduces the objective). Thus, $y_2=0$, $x_2=0$ if $a_2\geq 0$ and $x_2=1$ otherwise, and either $x_1=y_1=0$ or $x_1=1$ and $y_1=-\frac{b_1}{2}$, which is the stationary point of $b_1 y_1 + y_1^2$. 
 
 	\vspace{1mm}
	\noindent
	\textbf{Optimal solutions of $(P_1)$}. Note that there exists an optimal solution of $(P_1)$ where at least one of the continuous variables is $0$ (if $0<y_1,y_2$, subtracting $\epsilon>0$ from both variables does not increase the objective value --- this operation does not change the relative order of $y_1$ and $y_2$). Then, we conclude that $y_2=0$ in an optimal solution (if $y_1=0$ and $y_2>0$, then setting $y_2=0$ reduces the objective value). Moreover, when $y_2=0$, then $f(x,y)=y_1^2/x_1$. Thus, in the optimal solution $y_1=-b_1x_1/2$. Substituting in the objective, we see that $(P_1)$ simplifies to 
	$
	\min_{0\leq x_1, x_2\leq 1}
	a_2x_2+ \big (a_1-b_1^2/4 \big )x_1.
	$
	For an optimal solution, $x_2=0$ if $a_2\geq 0$ and $x_2=1$ otherwise, and $x_1=0$ if $a_1-b_1^2/4\geq 0$ and $x_1=1$ otherwise. And, if $x_1=1$, then $y_1=-b_1/2$. Hence, the optimal solutions coincide. \
\end{proof}

\subsection{Valid inequalities for $S_U$}
\label{sec:validUnbounded}

\paragraph{Inequalities in an extended formulation} 
Let $\bar Q_i = \sum_{j=1}^nQ_{ij}$ and $P = \{i \in N: \bar Q_i > 0\}$ and $\bar P = N \setminus P$. Using decomposition \eqref{eq:quadraticDecomposition} and introducing
$t_{ij}$, $1 \le i \le j \le n$, one can write a convex relaxation of $S_U$ as
\begin{align*}
\sum_{i \in \bar P} \bar Q_i y_i +
\sum_{i \in P} \bar Q_i y_i^2/x_i 
- \sum_{i=1}^n \sum_{j=i+1}^n Q_{ij} t_{ij} & \le t  \\
f(x_i, x_j, y_i, y_j) & \le t_{ij},  \ \ 1 \le i \le j \le n. 
\end{align*}

\paragraph{Inequalities in the original space of variables} 
By projecting out the auxiliary variables $t_{ij}$ one obtains valid inequalities in the original space of variables. 
\rev{By re-indexing variables if necessary,} assume
that $y_1\geq y_2\geq \ldots \geq y_n$ to obtain the \rev{convex} inequality
\begin{equation}
\label{eq:nonlinearValidUnbounded}
\sum_{i \in \bar P} \bar Q_i y_i +
\sum_{i \in P} \bar Q_i y_i^2/x_i -
\sum_{i=1}^n\sum\limits_{j=i+1}^n Q_{ij}(y_i-y_j)^2/x_i\leq t.
\end{equation} 
Observe that the nonlinear inequality \eqref{eq:nonlinearValidUnbounded} is valid only if  $y_1\geq \ldots \geq y_n$ holds. However, we can obtain linear inequalities that are valid for $S_U$ by underestimating \rev{the convex function $
	\sum_{i \in \bar P} \bar Q_i y_i +
	\sum_{i \in P} \bar Q_i y_i^2/x_i 
	- \sum_{i=1}^n \sum_{j=i+1}^n Q_{ij} f(x_i,x_j,y_i,y_j)$} \rev{by its subgradients}.
Let $(\bar{x},\bar{y})\in [0,1]^N\times \R_+^N$ be such that $\bar{y}_1\geq \ldots \geq \bar{y}_n$ and $\bar{x}>0$. Then, the \rev{subgradient} inequality 
\begin{align*}
&-\sum_{i\in P}\bar Q_i \left(\frac{\bar y_i}{\bar x_i}\right)^2 x_i+\sum_{i=1}^n \left(\sum\limits_{j=i+1}^n\frac{ Q_{ij}(\bar{y}_i-\bar{y}_j)^2}{\bar{x}_i^2}\right) x_i\\
&+2\sum_{i\in P}\bar Q_i \frac{\bar y_i}{\bar x_i} y_i+\sum_{i\in \bar P}\bar Q_i y_i+2\sum_{i=1}^n\left( 
\sum_{j=1}^{i-1}\frac{Q_{ij}(\bar{y}_j-\bar{y}_i)}{\bar{x}_j}
-\sum\limits_{j=i+1}^n\frac{ Q_{ij}(\bar{y}_i-\bar{y}_j)}{\bar{x}_i}
\right)y_i\leq t,
\end{align*}
corresponding to a first order approximation of \eqref{eq:nonlinearValidUnbounded} around $(\bar{x},\bar{y})$, is valid for $S_U$ (regardless of the ordering of the variables). 

\section{The bounded set $X$}
\label{sec:convexHullBounded}
Let $g:[0,1]^2\times \R_+^2\to \R_+$ be defined as
\begin{equation}
\label{eq:defG}
g(x,y)=\begin{cases}
\frac{(y_1-x_2)^2}{x_1-x_2}+\frac{(x_2-y_2)^2}{x_2} & \text{if }y_2\leq x_2\leq y_1  \text{ and }x_2(x_1-y_1)\leq y_2(x_1-x_2) \\
\frac{(y_2-x_1)^2}{x_2-x_1}+\frac{(x_1-y_1)^2}{x_1} & \text{if }y_1\leq x_1\leq y_2 \text{ and }x_1(x_2-y_2)\leq y_1(x_2-x_1)\\
f(x,y) & \text{otherwise,}
\end{cases}
\end{equation}
where $f$ is the function defined in \eqref{eq:defF}.
This section is devoted to proving the main result:
\begin{theorem}[Convex hull of $X$]
	\label{theo:convexHullBounded}
	$$\text{conv}(X)=\left\{(x,y,t)\in [0,1]^2\times  \R_+^3: g(x,y)\leq t,\; y_i \leq x_i,\;i=1,2 \right\}.$$
\end{theorem}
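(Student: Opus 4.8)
Write $R:=\left\{(x,y,t)\in[0,1]^2\times\R_+^3:g(x,y)\le t,\ y_i\le x_i,\ i=1,2\right\}$ for the right-hand side. The plan is to prove $\conv(X)\subseteq R$ and $R\subseteq\conv(X)$ separately, following the template of the proof of Theorem~\ref{theo:convexHullUnbounded}. For the first inclusion I would show that $g$ is convex on $[0,1]^2\times\R_+^2$ (so that $R$ is a convex set) and that $g(x,y)=(y_1-y_2)^2$ whenever $x\in\{0,1\}^2$ and $0\le y_i\le x_i$. The latter is a short verification over the four integer values of $x$ using the convention $\nicefrac{0}{0}=0$: for $x=(1,0)$ the first piece of \eqref{eq:defG} applies and evaluates to $\nicefrac{y_1^2}{1}$, for $x=(0,1)$ the second piece evaluates to $y_2^2$, for $x=(0,0)$ every piece is $0$, and for $x=(1,1)$ all three pieces evaluate to $(y_1-y_2)^2$. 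It follows that every point of $X$ lies in $R$, and hence $\conv(X)\subseteq R$.

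The convexity of $g$ would be established as in Proposition~\ref{prop:convexityF}. Each of the three pieces is convex on its region --- the first two are sums of closures of perspective functions of $y\mapsto y^2$, and the third is $f$, shown convex in Proposition~\ref{prop:convexityF} --- and $g$ is continuous across the boundary curves between these regions (the loci $y_1=y_2$, $y_i=x_i$, and $x_2(x_1-y_1)=y_2(x_1-x_2)$ and its symmetric counterpart), on which the adjacent pieces take equal values. To verify Jensen's inequality for a segment whose endpoints lie in different pieces, I would split the segment at the point where it crosses the boundary between the two pieces --- a point with a closed-form expression obtained by equating the piece-defining quantities --- and reduce to the within-piece case, exactly as in the crossing argument in the proof of Proposition~\ref{prop:convexityF}. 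Subgradient inequalities for $g(x,y)\le t$ can then be implemented as in \eqref{eq:subgradientCut}.

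For $R\subseteq\conv(X)$ I would mirror the optimization argument of Theorem~\ref{theo:convexHullUnbounded}. Since $\conv(X)$ is closed ($X$ is a finite union of closed convex sets sharing the single recession direction $e_t$), it suffices to show that for every $(a,b,c)$ the problems $(P_0)=\min_X(a'x+b'y+ct)$ and $(P_1)=\min_R(a'x+b'y+ct)$ have equal optimal value, attained (when finite) at a point integral in $x$; the inequality $(P_1)\le(P_0)$ is immediate. If $c<0$ both problems are unbounded below, and if $c=0$ then $(P_1)$ is a linear program over a polytope whose vertices are integral in $x$, and one checks directly that it matches $(P_0)$. For $c=1$ we have $t=g(x,y)$ at optimality and $(P_1)$ reduces to the bounded convex program $\min\{a'x+b'y+g(x,y):0\le y_i\le x_i\le 1\}$; using the symmetry of $X$ and $g$ under interchanging the two coordinates, assume $b_1\le b_2$. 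If $b_1,b_2\ge 0$ then $y=0$ is optimal (this lowers $b'y$ and makes $g=0$) and $\min_{[0,1]^2}a'x$ is attained at integral $x$. If $b_1<0$, I would minimize separately over the three polyhedral regions of $g$: on each region the objective is a smooth convex function, so its minimum is attained at a vertex of the region or at a stationary point of the piece, and enumerating these --- with the side conditions in \eqref{eq:defG} providing exactly the feasibility requirements $0\le y_i^v\le v_i$ of the natural disjunctive decomposition of $x$ over $v\in\{0,1\}^2$ --- shows that an optimal solution with $x$ integral always exists. Such a solution lies in $X$ because $g=(y_1-y_2)^2$ there, so $(P_0)\le(P_1)$, hence $(P_0)=(P_1)$, and the supporting hyperplane theorem yields $R\subseteq\conv(X)$.

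I expect the two hard points to be (i) the convexity of $g$, i.e.\ checking continuity and Jensen's inequality along the several boundary curves, including the degenerate locus where the first two pieces coincide, and (ii) the case analysis for $c=1$, especially the sign patterns of $b$ for which $g$ is not monotone in $y$, so that one cannot simply push $y$ to a bound; this is precisely where the side conditions in \eqref{eq:defG} enter. A more constructive alternative for $R\subseteq\conv(X)$ is to exhibit the convex combination directly: the three pieces of $g$ correspond to writing $x$ as a combination of $\{(0,0),(1,0),(1,1)\}$, of $\{(0,0),(0,1),(1,1)\}$, and of the two-point pattern underlying $f$, with multipliers read off from $x$ (for the first piece, $\lambda_{(1,1)}=x_2$, $\lambda_{(1,0)}=x_1-x_2$, $\lambda_{(0,0)}=1-x_1$); the side conditions in \eqref{eq:defG} are exactly what guarantees $0\le y_i^v\le v_i$ for the induced continuous parts, and the perspective identity $\lambda_v(y_1^v-y_2^v)^2=(\lambda_v y_1^v-\lambda_v y_2^v)^2/\lambda_v$ reconstructs each piece of $g$ as the corresponding value of $\sum_v\lambda_v t^v$.
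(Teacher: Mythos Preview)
Your proposal is plausible but takes a genuinely different route from the paper, and the parts you flag as ``hard points'' are precisely where the paper's approach pays off.

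The paper does not verify a given $g$; it \emph{derives} $g$ constructively via a nested disjunction. First it fixes $x_2\in\{0,1\}$ and, using the perspective reformulation for $x_2=0$ and the intermediate set $X_1$ (one indicator, two continuous variables) for $x_2=1$, writes $\conv(X)$ as $\conv(K_0\cup K_1)$. The standard disjunctive description of this union is then projected onto the original variables by solving a small convex program (M2) in two auxiliary variables $(\hat x_1,\hat y_1)$; Lemmas~\ref{lem:functionPsi}--\ref{lem:case3} compute the optimal value of (M2) in closed form, and the three cases there produce exactly the three pieces of $g$ together with the side conditions in \eqref{eq:defG}. Because $g$ arises as the value function of a convex projection, convexity of $g$ and the equality $\conv(X)=R$ come for free --- there is no separate convexity proof and no optimization argument of the $(P_0)$-vs-$(P_1)$ type.

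Your constructive alternative at the end is the closest to what the paper actually does: the decomposition of $x$ over $\{(0,0),(1,0),(1,1)\}$ with weights $(1-x_1,x_1-x_2,x_2)$ is exactly the two-stage disjunction the paper uses (first on $x_2$, then on $x_1$), and the minimization over the induced continuous parts is the problem (M2). What you leave implicit --- the optimal $y^v$ and the verification that the side conditions in \eqref{eq:defG} are precisely the KKT/feasibility switches between the three optima --- is the content of Lemmas~\ref{lem:case1}--\ref{lem:case3}. By contrast, your primary plan (prove convexity of $g$ piecewise, then run a $(P_0)$/$(P_1)$ case analysis) would work in principle but is strictly more laborious: $g$ has more pieces and more boundary curves than $f$, and the bounded optimization case analysis does not reduce as cleanly as in Theorem~\ref{theo:convexHullUnbounded} because pushing $y$ to a bound is not always admissible. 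The paper's constructive route sidesteps both difficulties.
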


\begin{remark}
	Observe that for the binary restriction $X_B$ with $y_i=x_i$, $i=1,2$, $g(x,y) \le t$ reduces to 
	 $|x_1 - x_2| \leq t$, which together with the bound constraints describe $\conv(X_B)$. 
\end{remark}

The rest of this section is organized as follows. In Section~\ref{sec:convexHullBinary1} we give the convex hull description of the intermediate set with \rev{two continuous} variables and one \rev{indicator} variable:
$$X_1=\left\{(x,y,t)\in \{0,1\}\times \R_+^2\times \R: (y_1-y_2)^2\leq t,\; y_1\leq x,\; y_2\leq 1\right\}.$$
In Section~\ref{sec:convexHullBinary2} we use this results to prove Theorem~\ref{theo:convexHullBounded}. Finally, in Section~\ref{sec:counterexample} we give valid inequalities for $S$.  Unlike in Section~\ref{sec:convexHullUnbounded}, the convex hull proofs in this section are constructive, \rev{i.e., we show how $g$ is constructed from the mixed-binary description of $X$, instead of just verifying that $g$ does indeed result in conv$(X)$}.

\subsection{Convex hull description of $X_1$}
\label{sec:convexHullBinary1}

Let  $g_1:[0,1]\times \R_+^2\to \R_+$ be given by
$$ g_1(x,y_1, y_2)=\begin{cases}\frac{\left(y_2-x\right)^2}{1-x}+\frac{\left(x-y_1\right)^2}{x} & \text{if }x-y_1\leq x(y_2-y_1)\\
\frac{\left(y_1-y_2\right)^2}{x} & \text{if }y_2\leq y_1\\
(y_2-y_1)^2 & \text{otherwise.}
\end{cases}$$

\begin{proposition}
	\label{prop:ConvexHull1}
	$\conv(X_1)=\left\{(x,y,t)\in [0,1]\times \R_+^2\times \R: g_1(x,y_1,y_2)\leq t,\; y_1\leq x, \; y_2\leq 1 \right\}$.
\end{proposition}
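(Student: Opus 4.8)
The plan is to derive $g_1$ constructively by a disjunctive (perspective) argument and then recover the explicit three-piece formula by a one-dimensional minimization. First I would write $X_1$ as the union of its two integer slices,
\[
X_1\cap\{x=0\}=\{(0,0,y_2,t):0\le y_2\le 1,\ y_2^2\le t\},\qquad X_1\cap\{x=1\}=\{(1,y_1,y_2,t):0\le y_1,y_2\le 1,\ (y_1-y_2)^2\le t\},
\]
which are closed convex sets with the common recession cone $\{0\}\times\{0\}\times\{0\}\times\R_+$; hence their convex hull is closed and $\conv(X_1)$ consists exactly of the points $(1-\mu)(0,0,r,s)+\mu(1,p,q,w)$ with $\mu\in[0,1]$, $(r,s)$ feasible for the first slice and $(p,q,w)$ for the second. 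Reading off coordinates, $x=\mu$, $y_1=\mu p$, $y_2=\mu q+(1-\mu)r$ and $t=\mu w+(1-\mu)s$, so $\conv(X_1)\subseteq\{x\in[0,1],\ y\ge 0,\ y_1\le x,\ y_2\le 1\}$, and for fixed $(x,y_1,y_2)$ in this region with $0<x<1$ the smallest feasible $t$ is obtained by forcing $p=y_1/x$, setting $r=(y_2-xq)/(1-x)$, and taking $w,s$ as small as possible; this gives $g_1(x,y_1,y_2)=\min_{q\in I}\psi(q)$, where
\[
\psi(q)=\frac{(y_1-xq)^2}{x}+\frac{(y_2-xq)^2}{1-x},\qquad I=\left[\max\left\{0,\tfrac{y_2-(1-x)}{x}\right\},\ \min\left\{1,\tfrac{y_2}{x}\right\}\right],
\]
the interval $I$ encoding $0\le q\le 1$ and $0\le r\le 1$; the cases $x\in\{0,1\}$ are handled directly. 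Since the right-hand side is the lower boundary of the closed convex set $\conv(X_1)$, convexity of $g_1$ will be automatic once this identity is verified, so it only remains to evaluate the minimization and match the formula.

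The second step is routine one-variable calculus together with a case split. I would compute the unconstrained minimizer $q^*=\big((1-x)y_1+xy_2\big)/x$ and check the identities $y_1-xq^*=x(y_1-y_2)$ and $y_2-xq^*=(1-x)(y_2-y_1)$, so that $\psi(q^*)=(y_1-y_2)^2$. Next I would translate membership $q^*\in I$ into conditions on $(x,y_1,y_2)$: the lower endpoint of $I$ is never active (this uses $y_2\le 1$, $y_1\ge 0$), while $q^*\le 1\iff x-y_1\ge x(y_2-y_1)$ and $q^*\le y_2/x\iff y_1\le y_2$. This produces exactly three regimes, which I would show coincide with the cases defining $g_1$: (i) if $x-y_1\ge x(y_2-y_1)$ and $y_1\le y_2$, then $q^*\in I$ and $g_1=(y_1-y_2)^2$ (the ``otherwise'' case); (ii) if $x-y_1\le x(y_2-y_1)$, then $q^*\ge 1$, and since this condition forces $y_2\ge x$ (using $y_1\le x$) one has $1\in I$, so the minimizer is $q=1$ and $g_1=\frac{(y_1-x)^2}{x}+\frac{(y_2-x)^2}{1-x}$; (iii) if $y_2\le y_1$, then $q^*\ge y_2/x$, and since $y_2\le y_1\le x$ one has $y_2/x\in I$, so the minimizer is $q=y_2/x$ and $g_1=\frac{(y_1-y_2)^2}{x}$.

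To close the argument I would check consistency on the overlaps and boundaries. On the overlap of regimes the formulas agree; for instance when $x-y_1=x(y_2-y_1)$ one has $q^*=1$, so the first-case formula equals $\psi(1)=(y_1-y_2)^2$, and the listed order of cases in the definition of $g_1$ causes no ambiguity. At $x=0$ the constraint $y_1\le x$ forces $y_1=0$ and $g_1=y_2^2$, while at $x=1$ all three pieces collapse to $(y_1-y_2)^2$; both are consistent with the corresponding integer slice under the convention $\nicefrac{0}{0}=0$. Finally, the reverse inclusion follows since any $(x,y_1,y_2,t)$ in the claimed set satisfies $t\ge g_1(x,y_1,y_2)$, so taking the optimal $q$ above and adding the slack $t-g_1$ to the $t$-coordinate of the $x=1$ point exhibits it as a convex combination of points of $X_1$.

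I expect the main obstacle to be the bookkeeping in the second step: among the box constraints $0\le p\le 1$ (equivalently $y_1\le x$), $0\le q\le 1$ and $0\le r\le 1$ that define $I$, one must identify which one is active in each regime and verify that the resulting partition of $\{x\in[0,1],\ y\ge 0,\ y_1\le x,\ y_2\le 1\}$ reproduces the piecewise domains of $g_1$ \emph{verbatim}, including the side condition $x-y_1\le x(y_2-y_1)$ and the degenerate behavior at $x\in\{0,1\}$ under the division conventions. Everything else reduces to minimizing a convex quadratic over an interval.
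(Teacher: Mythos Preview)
Your proposal is correct and follows essentially the same approach as the paper: both write $X_1$ as the disjunction over $x\in\{0,1\}$, reduce membership in $\conv(X_1)$ to the one-variable minimization of $\psi(q)=\frac{(y_1-xq)^2}{x}+\frac{(y_2-xq)^2}{1-x}$ over the interval induced by the box constraints, compute the unconstrained minimizer $q^*=y_2+y_1\frac{1-x}{x}$, and obtain the three pieces of $g_1$ by determining which (if any) upper bound of the interval is active. Your treatment adds the recession-cone remark for closedness and is a bit more careful about the boundary cases $x\in\{0,1\}$, but the core argument is identical.
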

\begin{proof}
	Note that a point $(x,y,t)$ belongs to $\conv(X_1)$ if and only if there exists $(\bar{x},\bar{y},\bar{t})$, $(\hat{x},\hat{y},\hat{t})$ and $0\leq \lambda \leq 1$ such that 
	\begin{align}
	&t=(1-\lambda)\bar{t}+\lambda \hat{t}\label{eq:convT}\\
	&x=(1-\lambda)\bar{x}+\lambda \hat{x}\label{eq:convX}\\
	&y_1=(1-\lambda)\bar{y}_1+\lambda \hat{y}_1\label{eq:convY}\\
	&y_2=(1-\lambda)\bar{y}_2+\lambda \hat{y}_2\label{eq:convZ}\\&\bar{x}=0,\; \hat{x}=1\label{eq:defX}\\
	&\bar{y}_1=0,\; 0\leq \hat{y}_1\leq 1\label{eq:defY}\\
	&0\leq \bar{y}_2, \ \hat{y}_2\leq 1\label{eq:defZ}\\
	&\bar{t}\geq \bar{y}_2^2\\
	&\hat{t}\geq (\hat{y}_1-\hat{y}_2)^2.\label{eq:defT2}
	\end{align} 
	\rev{The non-convex system \eqref{eq:convT}--\eqref{eq:defT2} follows directly from the definition of the convex hull. Note that a convex extended formulation of conv($X_1$) could also be obtained using the approach proposed by \citet{Ceria1999}. See also \citet{V:cayley} for a recent approach to eliminate the auxiliary variables using Cayley embedding.  
	We now show how to project out the additional variables  $(\bar{x},\bar{y},\bar{t})$, $(\hat{x},\hat{y},\hat{t})$ to find conv$(X_1)$ in the original space of variables, which can be done directly from the non-convex formulation above. }
	
	From constraints  \eqref{eq:convX} and \eqref{eq:defX} we see $\lambda =x$, from constraint \eqref{eq:convY}  $\hat{y}_1=\frac{y_1}{x}$, from \eqref{eq:defY} $y_1\leq x$, from \eqref{eq:convZ} we find $\bar{y}_2=\frac{y_2-x\hat{y}_2}{1-x}$, and from \eqref{eq:defZ} we get $0\leq \hat{y}_2\leq 1$ and $0\leq \frac{y_2-x\hat{y}_2}{1-x}\leq 1$. Thus, \eqref{eq:convT}--\eqref{eq:defT2} is feasible if and only if $0\leq y_1 \leq x$, $0\leq y_2 \leq 1$ and there exists $\hat{y}_2$ such that 
	\begin{align*}
	&t\geq\frac{\left(y_2-x\hat{y}_2\right)^2}{1-x}+\frac{\left(x\hat{y}_2-y_1\right)^2}{x}, \ \
	0\leq \hat{y}_2\leq 1, 
	\ \ 
	\frac{y_2}{x}-\frac{1-x}{x}\leq \hat{y}_2\leq \frac{y_2}{x} \cdot
	\end{align*}
	The existence of such $\hat{y}_2$ can be checked by solving the convex optimization problem
	\begin{align*}
	\text{(M1)} \ \ \ \ \ \ \ \ \ \min\; &\varphi(\hat{y}_2):= \frac{\left(y_2-x\hat{y}_2\right)^2}{1-x}+\frac{\left(x\hat{y}_2-y_1\right)^2}{x}\\
	\text{s.t.}\;&\max\left\{0,\frac{y_2}{x}-\frac{1-x}{x} \right\}\leq \hat{y}_2\leq  \min\left\{1, \frac{y_2}{x}\right\}.
	\end{align*}
	The equation $\varphi'(\hat{y}_2)=0$ yields
	\begin{align*} &-\frac{\left(y_2-x\hat{y}_2\right)}{1-x}+\frac{\left(x\hat{y}_2-y_1\right)}{x}=0\\
	\Leftrightarrow & \hat{y}_2=y_2+y_1\frac{1-x}{x}:=\eta(x,y).
	\end{align*}
	Let $\hat{y}_2^*$ be an optimal solution to (M1). Note that $\hat{y}_2^*> 0$ whenever $ \eta(x,y)> 0$. Moreover,  $\eta(x,y)\leq \frac{y_2}{x}-\frac{1-x}{x}\implies y_1+1\leq y_2$, which can only happen if $y_1=0$ and $y_2=1$, in which case $\frac{y_2}{x}-\frac{1-x}{x}=1$ . Thus, we may assume that $\hat{y}_2^*$ is not equal to one of its lower bounds. 
	
	Now observe that $\frac{y_2}{x}\leq \eta(x,y)\Leftrightarrow y_2\leq y_1$, in which case $\eta(x,y)\leq \frac{y_1}{x}\leq 1$. Additionally, if $1\leq \eta(x,y)$, then $x\leq y_2$ and in particular $y_1\leq y_2$. Therefore, the cases $ \eta(x,y) \leq \min\{1,\frac{y_2}{x}\}$, $\eta(x,y)\geq 1$, and $\eta(x,y)\geq \frac{y_2}{x}$ are mutually exclusive if $\frac{y_2}{x}\neq x$, and the optimal solution of (M1) corresponds to setting $\hat{y}_2^*=\eta(x,y)$, $\hat{y}_2^*=1$, or $\hat{y}_2^*=\frac{y_2}{x}$, respectively. By calculating the objective function of (M1) with the appropriate value of $\hat{y}_2^*$, we find $\varphi(\hat{y}_2^*) = g_1(x,y_1,y_2)$.
	Hence, $(x,y,t)\in \conv(X_1)$ if and only if $t\geq g_1(x,y_1,y_2)$ and $0\leq y_1\leq x\leq 1$, $0\leq y_2\leq 1$.\
\end{proof}

\subsection{Convex hull description of $X$}
\label{sec:convexHullBinary2}

We use a similar argument as in the proof of Proposition~\ref{prop:ConvexHull1} to prove Theorem~\ref{theo:convexHullBounded}. 
Let $(x,y,t)$ be a point such that $0\leq y_i\leq x_i\leq 1$ 
and \emph{we additionally assume that $y_1\geq y_2$}. A point $(x,y,t)$ belongs to $\conv(X)$ if and only if there exists $(\bar{x},\bar{y},\bar{t})$, $(\hat{x},\hat{y},\hat{t})$, and $0\leq \lambda \leq 1$ such that
\begin{align}
&t=(1-\lambda)\bar{t}+\lambda \hat{t}\label{eq:convT1}\\
&x_1=(1-\lambda)\bar{x}_1+\lambda \hat{x}_1\label{eq:convX1}\\
&x_2=(1-\lambda)\bar{x}_2+\lambda \hat{x}_2\label{eq:convW1}\\
&y_1=(1-\lambda)\bar{y}_1+\lambda \hat{y}_1\label{eq:convY1}\\
&y_2=(1-\lambda)\bar{y}_2+\lambda \hat{y}_2\label{eq:convZ1}\\
&\bar{x}_2=0,\; \hat{x}_2=1\label{eq:defW1}\\
&\bar{y}_2=0,\; 0\leq \hat{y}_2\leq 1\label{eq:defZ1}\\
&0\leq \bar{y}_1\leq \bar{x}_1\leq 1, \; 0\leq \hat{y}_1\leq \hat{x}_1\leq 1\label{eq:defYX}\\
&\bar{t}\geq \bar{y}_1^2/\bar{x}_1\\
&\hat{t}\geq g_1(\hat{x}_1,\hat{y}_1,\hat{y}_2).\label{eq:defT22}
\end{align}

	\rev{The system \eqref{eq:convT1}--\eqref{eq:defT22} corresponds to $\conv(K_0\cup K_1)$, where $K_0=\{(x,y,t)\in [0,1]^2\times \R_+^2\times \R:\nicefrac{y_1^2}{x_1}\leq t,\; y_2=x_2=0\}$ and $K_1=\{(x,y,t)\in [0,1]^2\times \R_+^2\times \R:g_1(x_1,y_1,y_2)\leq t,\; x_2=1\}$. Observe that $K_0$ and $K_1$ are the convex hulls of the restrictions of $X$, where $x_2=0$ and $x_2=1$, respectively.}

Using a similar reasoning as in the proof of Proposition \ref{prop:ConvexHull1}, we find $\lambda=x_2$, $\hat{y}_2=\frac{y_2}{x_2}$, $\bar{x}_1=\frac{x_1-x_2\hat{x}_1}{1-x_2}$, $\bar{y}_1=\frac{y_1-x_2\hat{y}_1}{1-x_2}$, and 
\begin{align}
\text{(M2)} \ \ \ \ \ \ \ \ \ t\geq \min_{\hat{x}_1,\hat{y}_1}\;&\psi(\hat{x}_1,\hat{y}_1)\notag\\
\text{s.t.}\;& 0\leq \hat{y}_1\leq \hat{x}_1\leq 1\label{eq:constraints1}\\
&\hat{y}_1\leq \frac{y_1}{x_2},\; \hat{x}_1-\hat{y}_1\leq\frac{x_1-y_1}{x_2},\; \frac{x_1}{x_2}-\frac{1-x_2}{x_2}\leq \hat{x}_1, \label{eq:constraints2}
\end{align}
where
$$\psi(\hat{x}_1,\hat{y}_1):=\frac{\left(y_1-x_2\hat{y}_1\right)^2}{x_1-x_2\hat{x}_1}+
x_2 g_1(\hat x_1, \hat y_1, y_2/x_2) \cdot
$$

Thus, to find the convex hull of $X$, we need to compute in closed form the solutions of the optimization problem (M2).

\begin{lemma}
	\label{lem:functionPsi}
	There exists an optimal solution $(\hat{x}_1^*,\hat{y}_1^*)$ to (M2) such that $\hat{y}_1^*\geq \frac{y_2}{x_2}$.	
\end{lemma}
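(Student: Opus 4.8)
The plan is to argue by a local exchange/projection argument on the objective $\psi$ of (M2). Recall that $\psi(\hat x_1,\hat y_1)=\frac{(y_1-x_2\hat y_1)^2}{x_1-x_2\hat x_1}+x_2\,g_1(\hat x_1,\hat y_1,y_2/x_2)$, and that $g_1$ itself is a piecewise-quadratic function whose middle and first pieces involve the scaled quantity $y_2/x_2$. Write $z:=y_2/x_2$ for brevity. Suppose $(\hat x_1^*,\hat y_1^*)$ is optimal with $\hat y_1^*<z$; I want to produce another optimal solution with $\hat y_1\ge z$, which (together with the first branch of $g_1$) will let us reduce (M2) to a single smooth piece in the next step. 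The key observation is that when $\hat y_1<z$ we are in the regime $y_2\le x\cdot(\ldots)$-type ordering where the relevant branch of $g_1$ is the ``otherwise'' branch $f$, which here equals $(z-\hat y_1)^2$ (since $\hat y_1\le z$). So on the region $\{\hat y_1\le z\}$ the objective is $\psi=\frac{(y_1-x_2\hat y_1)^2}{x_1-x_2\hat x_1}+x_2(z-\hat y_1)^2$.

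First I would check the sign of $\partial\psi/\partial\hat y_1$ on the region $\hat y_1\le z$. We have $\partial_{\hat y_1}\psi=-\frac{2x_2(y_1-x_2\hat y_1)}{x_1-x_2\hat x_1}-2x_2(z-\hat y_1)$. Since $x_1\ge y_1\ge 0$ and $\hat x_1\le 1$ give $x_1-x_2\hat x_1\ge x_1-x_2\ge 0$, and since $y_1-x_2\hat y_1\ge y_1-x_2 z = y_1-y_2\ge 0$ (using our standing assumption $y_1\ge y_2$ and $\hat y_1\le z$), both terms are nonpositive: $\partial_{\hat y_1}\psi\le 0$ throughout $\{\hat y_1\le z\}$. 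Hence, holding $\hat x_1$ fixed, $\psi$ is nonincreasing in $\hat y_1$ on this region, so one may push $\hat y_1$ upward toward $z$ without increasing the objective — provided the feasibility constraints \eqref{eq:constraints1}--\eqref{eq:constraints2} still permit it. The second step is therefore to verify that the point obtained by raising $\hat y_1$ to $\min\{z,\ \hat x_1^*,\ y_1/x_2,\ \hat x_1^*-(x_1-y_1)/x_2\}$ remains feasible; since we only increase $\hat y_1$ and leave $\hat x_1=\hat x_1^*$ fixed, the only constraints that could be violated are $\hat y_1\le\hat x_1$, $\hat y_1\le y_1/x_2$, and $\hat x_1-\hat y_1\le (x_1-y_1)/x_2$. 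If raising $\hat y_1$ to $z$ hits one of these bounds first, I must check that that bound already forces $\hat y_1\ge z$ or else adjust $\hat x_1^*$ too; e.g., if $\hat x_1^*<z$, one can simultaneously raise $\hat x_1$ (which only helps the term $\frac{(y_1-x_2\hat y_1)^2}{x_1-x_2\hat x_1}$ when the numerator is positive, and needs a separate monotonicity check for the $g_1$ term).

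The main obstacle I anticipate is precisely this feasibility bookkeeping at the boundary cases: when one of the constraints $\hat y_1\le y_1/x_2$ or $\hat x_1-\hat y_1\le(x_1-y_1)/x_2$ is tight before $\hat y_1$ reaches $z$, a naive ``push $\hat y_1$ up'' move is blocked, and one has to either show the constraint itself implies $z\le$ that bound (so the lemma's conclusion is automatic) or perform a joint move in $(\hat x_1,\hat y_1)$ along the active constraint and recheck that $\psi$ does not increase along that direction. I would organize the argument as a short case analysis on which of the constraints in \eqref{eq:constraints1}--\eqref{eq:constraints2} is binding at $(\hat x_1^*,\hat y_1^*)$, using in each case the standing hypotheses $0\le y_i\le x_i\le 1$, $y_1\ge y_2$, and $0<x_2$ (note $x_2>0$ is implicit, since otherwise (M2) is not the relevant representation). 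In each branch the computation reduces to a one-line inequality among $x_1,x_2,y_1,y_2$; the monotonicity computation above is the substantive content, and the rest is verifying those inequalities hold.
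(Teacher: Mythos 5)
Your opening move (monotonicity of $\psi$ in $\hat y_1$ on the region $\hat y_1<y_2/x_2$) is the same first step the paper takes, but two things in it are off. First, on $\{\hat y_1\le y_2/x_2\}$ the active branch of $g_1(\hat x_1,\hat y_1,y_2/x_2)$ is \emph{not} always the ``otherwise'' branch: the first branch is active exactly when $1\le y_2/x_2+\hat y_1(1-\hat x_1)/\hat x_1$, and in particular it is active on the whole diagonal $\hat y_1=\hat x_1\le y_2/x_2$ --- precisely the regime your argument gets pushed into. (Monotonicity in $\hat y_1$ does still hold there, since the $\hat y_1$-derivative of that branch is $-2x_2(\hat x_1-\hat y_1)/\hat x_1\le 0$, but your proof only checks the ``otherwise'' piece.) Second, a smaller slip: $x_1-x_2\hat x_1\ge 0$ does not follow from ``$x_1-x_2\ge 0$,'' which is false in general (the case $x_1\le x_2$ is exactly Lemma~\ref{lem:case1}); it follows instead from adding the feasibility constraints $\hat y_1\le y_1/x_2$ and $\hat x_1-\hat y_1\le (x_1-y_1)/x_2$, which give $\hat x_1\le x_1/x_2$.

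The genuine gap is the case you defer. Raising $\hat y_1$ can only be blocked by $\hat y_1\le \hat x_1$: the bound $\hat y_1\le y_1/x_2$ never blocks because $y_1/x_2\ge y_2/x_2$ (using $y_1\ge y_2$), and the remaining constraint is a lower bound on $\hat y_1$, which only loosens as $\hat y_1$ increases. So the entire content of the lemma is the joint move along the diagonal $\hat y_1=\hat x_1$, and your preliminary intuition about it is backwards: increasing $\hat x_1$ alone \emph{increases} the term $(y_1-x_2\hat y_1)^2/(x_1-x_2\hat x_1)$ by shrinking its denominator, so it does not ``only help.'' What is actually needed --- and what the paper does --- is to substitute $\hat y_1=\hat x_1$ (which forces the first branch of $g_1$ and gives $\psi(\hat x_1)=\frac{(y_1-x_2\hat x_1)^2}{x_1-x_2\hat x_1}+\frac{(y_2-x_2\hat x_1)^2}{x_2-x_2\hat x_1}$), compute $\psi'(\hat x_1)$, and verify it is nonpositive term by term using $y_i\le x_i$, $\hat x_1\le 1$ and $\hat x_1\le x_1/x_2$. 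That one-dimensional derivative computation is the substantive part of the proof and is absent from your plan; until it is carried out, the lemma is not established.
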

\begin{proof}
	Note that if $\hat{y}_1< \frac{y_2}{x_2}$, the function $\psi$ is non-increasing in $\hat{y}_1$ for any value of $\hat{x}_1$. Thus there exists an optimal solution where $\hat{y}_1$ is set to one of its upper bounds, i.e., either $\hat{y}_1^*=\nicefrac{y_1}{x_2}$ or $\hat{y}_1^*=\hat{x}_1^*$. Since we assume $y_1\geq y_2$ and $\hat{y}_1< \nicefrac{y_2}{x_2}$, the case $\hat{y}_1^*=\nicefrac{y_1}{x_2}$ is not possible. 
	
	Now suppose that $\hat{y}_1=\hat{x}_1$. Then observe that $1\leq \frac{y_2}{x_2} + \hat{y}_1\frac{1-\hat{x}_1}{\hat{x}_1}\Leftrightarrow \hat{x}_1\leq \frac{y_2}{x_2}$. Thus
	$$\psi(\hat{x}_1)=\frac{\left(y_1-x_2\hat{x}_1\right)^2}{x_1-x_2\hat{x}_1}+\frac{\left(y_2-x_2\hat{x}_1\right)^2}{x_2-x_2\hat{x}_1}$$
	in this case (substituting $\hat{y}_1=\hat{x}_1$). Taking the derivative, we find 
	\begin{align*}
	\psi'(\hat{x}_1)
	&=x_2\frac{y_1-x_2\hat{x}_1}{(x_1-x_2\hat{x}_1)^2}\left(-2x_1+x_2\hat{x}_1+y_1\right)+x_2\frac{(y_2-x_2\hat{x}_1)}{(x_2-x_2\hat{x}_1)^2}\left(-2x_2+x_2\hat{x}_1+y_2\right) \cdot
	\end{align*}
	Note that $y_1-x_2\hat{x}_1\geq 0$ since $\hat{x}_1=\hat{y}_1\leq \nicefrac{y_1}{x_2}$ in any feasible solution, and $y_2-x_2\hat{x}_1\geq 0$, by assumption. Additionally
	\begin{itemize}
		\item since $y_1\leq x_1$ and $\hat{x}_1=\hat{y}_1\leq \nicefrac{y_1}{x_2}\leq \nicefrac{x_1}{x_2}$, we find that $-2x_1+x_2\hat{x}_1+y_1\leq 0$,
		\item since $y_2\leq x_2$ and $\hat{x}_1\leq 1$, we find that $-2x_2+x_2\hat{x}_1+y_2\leq 0$.
	\end{itemize}  
	Therefore, $\psi'(x_1)$ is non-positive, i.e., $\psi$ is non-increasing. Then, 
	 increasing $\hat{y}_1=\hat{x}_1$ another optimal solution can be found. In particular,  an optimal solution with $\hat{y}_1^*\geq \nicefrac{y_2}{x_2}$ exits.\
\end{proof}

From Lemma \ref{lem:functionPsi} we can assume, without loss of generality, that 
\begin{equation}
\label{eq:psiForm}
\psi(\hat{x}_1,\hat{y}_1)=\frac{(y_1-x_2\hat{y}_1)^2}{x_1-x_2\hat{x}_1}+\frac{(x_2\hat{y}_1-y_2)^2}{x_2\hat{x}_1} \cdot
\end{equation}

Taking partial derivatives, we find that 
\begin{align*}
\frac{\partial \psi}{\partial \hat{y}_1}(\hat{x}_1,\hat{y}_1)=& \ 2x_2\left(-\frac{y_1-x_2\hat{y}_1}{x_1-x_2\hat{x}_1}+\frac{x_2\hat{y}_1-y_2}{x_2\hat{x}_1}\right),\\
\frac{\partial \psi}{\partial \hat{x}_1}(\hat{x}_1,\hat{y}_1)=& \
x_2 \left(\frac{y_1-x_2\hat{y}_1}{x_1-x_2\hat{x}_1}\right)^2- x_2 \left(\frac{x_2\hat{y}_1-y_2}{x_2\hat{x}_1}\right)^2.
\end{align*}
Lemmas~\ref{lem:case1}--\ref{lem:case3} characterize the optimal solutions of (M2), depending on the values of $(x,y)$. Note that if \begin{equation}\label{eq:optSufficient}
\hat{y}_1=\frac{y_2}{x_2}+\frac{\hat{x}_1}{x_1}(y_1-y_2),
\end{equation} 
then $\frac{\partial \psi}{\partial \hat{y}_1}(\hat{x}_1,\hat{y}_1)=\frac{\partial \psi}{\partial \hat{x}_1}(\hat{x}_1,\hat{y}_1)=0$, independently of the values of $\hat{x}_1$ and $\hat{y}_1$. Thus, any feasible point that satisfies \eqref{eq:optSufficient} is an optimal solution of (M2), as is the case for Lemmas~\ref{lem:case1} and
\ref{lem:case2}. In contrast, under the conditions of Lemma \ref{lem:case3}, no feasible point satisfies \eqref{eq:optSufficient} as it would violate upper bound constraints. 

\begin{lemma}
	\label{lem:case1}
	If $x_1\leq x_2$ then  $\hat{x}_1^*=\frac{x_1-\epsilon}{x_2}$, where $\epsilon>0$ is a sufficiently small number, and $\hat{y}_1^*=\frac{y_2}{x_2}+\frac{\hat{x}_1^*}{x_1}(y_1-y_2)$ is an optimal solution to $(M2)$ with objective $\psi(\hat{x}_2^*,\hat{y}_2^*)=\frac{(y_1-y_2)^2}{x_1} \cdot$
\end{lemma}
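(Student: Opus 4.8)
The plan is to verify directly that the proposed point $(\hat x_1^*,\hat y_1^*)$ is feasible for (M2) and attains the claimed objective value, and then invoke the first-order optimality observation already made in the text (namely, that any feasible point satisfying \eqref{eq:optSufficient} is optimal because both partial derivatives of $\psi$ vanish there). Since Lemma~\ref{lem:case1} treats the case $x_1\le x_2$, the first thing I would do is substitute $\hat x_1^* = (x_1-\epsilon)/x_2$ into \eqref{eq:optSufficient}: this is exactly the definition of $\hat y_1^*$, so \eqref{eq:optSufficient} holds by construction, and hence $\frac{\partial \psi}{\partial \hat y_1}=\frac{\partial \psi}{\partial \hat x_1}=0$ at this point. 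It remains only to confirm that the point lies in the feasible region \eqref{eq:constraints1}--\eqref{eq:constraints2} and then to compute $\psi(\hat x_1^*,\hat y_1^*)$.

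For feasibility I would check each constraint in turn, letting $\epsilon\to 0^+$. First, $\hat x_1^* = (x_1-\epsilon)/x_2 < x_1/x_2 \le 1$ using $x_1\le x_2$, so $\hat x_1^*\le 1$ (strictly, for $\epsilon$ small), and $\hat x_1^*>0$ since $x_1>0$ (we are in the case $x_1>0$; the degenerate case $x_1=0$ forces $y_1=y_2=0$ and is trivial). Next, $\hat y_1^* = \frac{y_2}{x_2} + \frac{\hat x_1^*}{x_1}(y_1-y_2)$; since $y_1\ge y_2$ (the standing assumption for this section) and $\hat x_1^*/x_1 < 1/x_2$, we get $\hat y_1^* < \frac{y_2}{x_2} + \frac{1}{x_2}(y_1-y_2) = \frac{y_1}{x_2}$, which is the bound $\hat y_1\le y_1/x_2$ in \eqref{eq:constraints2}. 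For $\hat y_1^*\le \hat x_1^*$: compute $\hat x_1^* - \hat y_1^* = \hat x_1^*\bigl(1 - \frac{y_1-y_2}{x_1}\bigr) - \frac{y_2}{x_2} = \frac{\hat x_1^*}{x_1}(x_1 - y_1 + y_2) - \frac{y_2}{x_2}$, which I expect to be nonnegative and in fact to give the constraint $\hat x_1 - \hat y_1 \le (x_1-y_1)/x_2$ in the limit, using $\hat x_1^*/x_1 < 1/x_2$ and $y_2\ge 0$; this needs a short sign check that $x_1-y_1+y_2\ge 0$, which holds since $y_1\le x_1$. The remaining bound $\hat x_1 \ge \frac{x_1}{x_2}-\frac{1-x_2}{x_2}$ holds because $\hat x_1^* = \frac{x_1-\epsilon}{x_2}$ and $\frac{1-x_2}{x_2}\ge 0$, so for $\epsilon$ small $\hat x_1^* \ge \frac{x_1}{x_2} - \frac{1-x_2}{x_2}$ unless $x_2=1$, in which case the two bounds $\hat x_1\le 1$ and $\hat x_1\ge x_1$ still admit $\hat x_1^* = x_1-\epsilon$ when $x_1 < 1$; the boundary sub-case $x_1=x_2=1$ can be handled separately (there $\hat x_1^*=1-\epsilon$ works, or one takes $\epsilon=0$). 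Finally, $\hat y_1^*\ge y_2/x_2$ from Lemma~\ref{lem:functionPsi} is automatic since $y_1\ge y_2$ makes the added term nonnegative.

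Having established feasibility, I would compute the objective. Plugging $\hat y_1^* = \frac{y_2}{x_2} + \frac{\hat x_1^*}{x_1}(y_1-y_2)$ into \eqref{eq:psiForm}: the numerator of the first term becomes $y_1 - x_2\hat y_1^* = y_1 - \frac{x_2 y_2}{x_2} - \frac{x_2\hat x_1^*}{x_1}(y_1-y_2) = (y_1-y_2)\bigl(1 - \frac{x_2\hat x_1^*}{x_1}\bigr) = (y_1-y_2)\cdot\frac{x_1 - x_2\hat x_1^*}{x_1}$, so the first term equals $(y_1-y_2)^2\cdot\frac{x_1-x_2\hat x_1^*}{x_1^2}$. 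Similarly, $x_2\hat y_1^* - y_2 = x_2\cdot\frac{\hat x_1^*}{x_1}(y_1-y_2) = \frac{x_2\hat x_1^*}{x_1}(y_1-y_2)$, so the second term equals $(y_1-y_2)^2\cdot\frac{x_2(\hat x_1^*)^2}{x_1^2\hat x_1^*} = (y_1-y_2)^2\cdot\frac{x_2\hat x_1^*}{x_1^2}$. Adding, $\psi(\hat x_1^*,\hat y_1^*) = \frac{(y_1-y_2)^2}{x_1^2}\bigl(x_1 - x_2\hat x_1^* + x_2\hat x_1^*\bigr) = \frac{(y_1-y_2)^2}{x_1}$, as claimed. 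I would note that this value is independent of $\epsilon$, so the role of $\epsilon>0$ is purely to keep $\hat x_1^*$ strictly below its upper bound $1$ (and to keep the denominators in \eqref{eq:psiForm} positive); one could equivalently argue via an infimum/limit that the value $\frac{(y_1-y_2)^2}{x_1}$ is attained.

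The main obstacle is the bookkeeping of the boundary cases in the feasibility check — in particular making sure that when $x_1 = x_2$ or $x_2 = 1$ the point $\hat x_1^* = (x_1-\epsilon)/x_2$ still satisfies all of \eqref{eq:constraints1}--\eqref{eq:constraints2} simultaneously, and handling the degenerate case $x_1 = 0$ where $\psi$ is not defined by \eqref{eq:psiForm} but the original set forces $y_1 = 0$ and the claim is trivial. The optimality itself is not the hard part: it follows for free from the vanishing-gradient remark preceding Lemma~\ref{lem:case1}, provided we have confirmed the point is in the (convex) feasible region, since a feasible stationary point of a convex function is a global minimizer.
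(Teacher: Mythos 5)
Your proposal takes essentially the same route as the paper: the paper's proof simply asserts that the given point satisfies \eqref{eq:constraints1}--\eqref{eq:constraints2}, notes that both partial derivatives vanish (so it is a KKT point of a convex problem), and substitutes into \eqref{eq:psiForm}; you carry out the same steps, just with the feasibility check and the substitution written out explicitly. One caveat: your verification of $\hat{y}_1^*\leq \hat{x}_1^*$ is not quite closed by the sign check $x_1-y_1+y_2\geq 0$ — what is actually needed is $\frac{\hat{x}_1^*}{x_1}(x_1-y_1+y_2)\geq \frac{y_2}{x_2}$, which for $\epsilon>0$ fails in the boundary case $y_1=x_1$, $y_2>0$ (a blemish shared by the lemma as stated in the paper). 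This is easily repaired, e.g.\ by observing that $\psi$ is constant equal to $\frac{(y_1-y_2)^2}{x_1}$ along the curve \eqref{eq:optSufficient} and that the limit point $\epsilon=0$ (with the convention $\nicefrac{0}{0}=0$) is feasible and attains the same value, so the conclusion of the lemma stands.
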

\begin{proof}
	We have $\frac{\partial \psi}{\partial \hat{y}_1}(\hat{x}_1^*,\hat{y}_1^*)=\frac{\partial \psi}{\partial \hat{x}_1}(\hat{x}_1^*,\hat{y}_1^*)=0$ and $(x_1^*,y_1^*)$ satisfies all constraints \eqref{eq:constraints1}--\eqref{eq:constraints2}. Thus, $(x_1^*,y_1^*)$ is a KKT point and, by convexity, is an optimal solution. Substituting in \eqref{eq:psiForm}, we get the result.\ 
\end{proof}

\begin{lemma}
	\label{lem:case2}
	If $x_1> x_2$ and $y_2(x_1-x_2)+y_1x_2\leq x_2x_1$, then $\hat{x}_1^*=1$ and $\hat{y}_1^*=\frac{y_2}{x_2}+\frac{\hat{x}_1^*}{x_1}(y_1-y_2)$ is an optimal solution to $(M2)$ with objective $\psi(\hat{x}_2^*,\hat{y}_2^*)=\frac{(y_1-y_2)^2}{x_1} \cdot$
\end{lemma}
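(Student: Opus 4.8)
The plan is to mirror the proof of Lemma~\ref{lem:case1}: exhibit the proposed pair $(\hat{x}_1^*,\hat{y}_1^*)$ as a \emph{feasible} point of the convex program (M2) at which $\nabla\psi$ vanishes, so that convexity forces optimality, and then evaluate $\psi$ there. The gradient condition is immediate: since $\hat{y}_1^*=\frac{y_2}{x_2}+\frac{\hat{x}_1^*}{x_1}(y_1-y_2)$ is precisely \eqref{eq:optSufficient} with $\hat{x}_1^*=1$, the observation preceding Lemma~\ref{lem:case1} gives $\frac{\partial \psi}{\partial \hat{y}_1}(\hat{x}_1^*,\hat{y}_1^*)=\frac{\partial \psi}{\partial \hat{x}_1}(\hat{x}_1^*,\hat{y}_1^*)=0$ with no further work. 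So the substance of the argument is feasibility.

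First I would check that $\hat{y}_1^*\geq \nicefrac{y_2}{x_2}$, which holds because $\frac{\hat{x}_1^*}{x_1}(y_1-y_2)\geq 0$ under the standing assumption $y_1\geq y_2$; this is what licenses using the form \eqref{eq:psiForm} for $\psi$. Then I would verify constraints \eqref{eq:constraints1}--\eqref{eq:constraints2} at $(\hat{x}_1^*,\hat{y}_1^*)=\bigl(1,\,\frac{y_2}{x_2}+\frac{y_1-y_2}{x_1}\bigr)$ one at a time. The inequality $\hat{x}_1^*\leq 1$ holds with equality; $\hat{y}_1^*\geq 0$ is clear; $\hat{y}_1^*\leq \nicefrac{y_1}{x_2}$ reduces to $\frac{y_1-y_2}{x_1}\leq \frac{y_1-y_2}{x_2}$, true since $0<x_2<x_1$; $\hat{x}_1^*-\hat{y}_1^*\leq \frac{x_1-y_1}{x_2}$ reduces, after clearing the positive denominator $x_1x_2$, to $x_1(x_2-x_1)\leq (y_1-y_2)(x_2-x_1)$, i.e.\ $x_1\geq y_1-y_2$, which follows from $y_1\leq x_1$ and $y_2\geq 0$; and $\frac{x_1}{x_2}-\frac{1-x_2}{x_2}\leq \hat{x}_1^*=1$ reduces to $x_1\leq 1$. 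The only genuinely binding constraint is $\hat{y}_1^*\leq \hat{x}_1^*=1$, which upon multiplication by $x_1x_2$ becomes exactly the hypothesis $y_2(x_1-x_2)+y_1x_2\leq x_1x_2$. Hence under the stated assumptions $(\hat{x}_1^*,\hat{y}_1^*)$ is feasible.

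Since $\psi$ in the form \eqref{eq:psiForm} is a sum of closures of perspective functions of convex quadratics (cf.\ the Notation section), it is convex, so a feasible point at which its gradient vanishes is a global minimizer of (M2). It then remains to compute the objective value: a short calculation gives $y_1-x_2\hat{y}_1^*=(y_1-y_2)\frac{x_1-x_2}{x_1}$ and $x_2\hat{y}_1^*-y_2=\frac{x_2(y_1-y_2)}{x_1}$, so the two terms of \eqref{eq:psiForm} equal $\frac{(y_1-y_2)^2(x_1-x_2)}{x_1^2}$ and $\frac{x_2(y_1-y_2)^2}{x_1^2}$, which sum to $\frac{(y_1-y_2)^2}{x_1}$, as claimed.

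This is essentially a bookkeeping exercise, and I do not expect any real obstacle; the one point deserving care is recognizing that the hypothesis $y_2(x_1-x_2)+y_1x_2\leq x_1x_2$ is precisely the condition making $\hat{y}_1^*\leq \hat{x}_1^*=1$ — so that, in contrast with Lemma~\ref{lem:case1} where the optimum is only approached as $\epsilon\to 0$, here the value $\hat{x}_1^*=1$ is actually attained — while all remaining constraints in \eqref{eq:constraints2} are automatically slack under the standing bounds $0\leq y_i\leq x_i\leq 1$ and $y_1\geq y_2$.
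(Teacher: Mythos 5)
Your proposal is correct and follows essentially the same route as the paper's proof: verify feasibility of $(1,\hat{y}_1^*)$ against \eqref{eq:constraints1}--\eqref{eq:constraints2} (with the hypothesis $y_2(x_1-x_2)+y_1x_2\leq x_1x_2$ being exactly the constraint $\hat{y}_1^*\leq\hat{x}_1^*$), invoke \eqref{eq:optSufficient} to see the gradient of $\psi$ vanishes, conclude optimality by convexity, and substitute into \eqref{eq:psiForm}. Your algebraic reductions of the individual constraints differ cosmetically from the paper's chain of inequalities but are equivalent and correct.
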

\begin{proof}
	Observe that $(\hat{x}_1^*,\hat{y}_1^*)$ is feasible as
	$
	\hat{y}_1^*=\frac{y_2}{x_2}+\frac{y_1-y_2}{x_1}\leq \frac{y_2}{x_2}+\frac{y_1-y_2}{x_2}=\frac{y_1}{x_2};
	\hat{y}_1^*=\frac{y_2}{x_2}+\frac{y_1-y_2}{x_1}=\frac{y_2x_1+y_1x_2-y_2x_2}{x_1x_2}\leq 1=\hat{x}_1^*;
	\hat{x}_1^*-\hat{y}_1^*= 1-\frac{y_2}{x_2}-\frac{y_1-y_2}{x_1}\leq 1-\frac{y_2}{x_1}-\frac{y_1-y_2}{x_1}=\frac{x_1-y_1}{x_1}\leq \frac{x_1-y_1}{x_2};
	\frac{x_1}{x_2}-\frac{1-x_2}{x_2}=\frac{x_1-1}{x_2}+1\leq 1= \hat{x}_1^*.
	$	
	Additionally, note that $\frac{\partial \psi}{\partial \hat{y}_1}(\hat{x}_1^*,\hat{y}_1^*)=\frac{\partial \psi}{\partial \hat{x}_1}(\hat{x}_1^*,\hat{y}_1^*)=0$.  Thus, $(x_1^*,y_1^*)$ is a KKT point and, by convexity, is an optimal solution. Substituting in \eqref{eq:psiForm}, we find the result.\
\end{proof}

\begin{lemma}
	\label{lem:case3}
	If $x_1> x_2$ and $y_2(x_1-x_2)+y_1x_2\geq x_2x_1$, then $\hat{x}_1^*=1$ and $\hat{y}_1^*=1$ is an optimal solution to $(M2)$ with objective $\psi(\hat{x}_2^*,\hat{y}_2^*)=\frac{(y_1-x_2)^2}{x_1-x_2}+\frac{(x_2-y_2)^2}{x_2} \cdot$
\end{lemma}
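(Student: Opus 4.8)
The plan is to show that the proposed point $(\hat x_1^*,\hat y_1^*)=(1,1)$ is a feasible Karush--Kuhn--Tucker point of the convex program (M2) and then invoke convexity of $\psi$ to conclude global optimality, just as in Lemmas~\ref{lem:case1}--\ref{lem:case2}. The difference is that here the interior stationarity relation \eqref{eq:optSufficient} cannot be satisfied within the feasible region, so the certificate of optimality must come from the two boundary constraints $\hat y_1\le\hat x_1$ and $\hat x_1\le 1$, both of which are active at $(1,1)$, with strictly positive multipliers.

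Concretely, I would first extract two consequences of the hypotheses. Since $y_2\le x_2$, the inequality $y_2(x_1-x_2)+y_1x_2\ge x_1x_2$ gives $x_1x_2\le x_2(x_1-x_2)+y_1x_2$, i.e.\ $y_1\ge x_2$; and rearranging the same inequality yields $x_2(y_1-x_2)\ge(x_1-x_2)(x_2-y_2)$, so that with $a:=\tfrac{y_1-x_2}{x_1-x_2}$ and $b:=\tfrac{x_2-y_2}{x_2}$ (both nonnegative, using $y_1\ge x_2$, $x_1>x_2$, $y_2\le x_2$) we have $a\ge b$. Using $y_1\ge x_2$ and $x_1\ge y_1$ one then checks that $(1,1)$ satisfies \eqref{eq:constraints1}--\eqref{eq:constraints2}: $0\le 1\le 1$; $1\le y_1/x_2$; $0=\hat x_1-\hat y_1\le(x_1-y_1)/x_2$; and $\tfrac{x_1}{x_2}-\tfrac{1-x_2}{x_2}=\tfrac{x_1-1}{x_2}+1\le 1$. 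Next, evaluating the partial-derivative formulas displayed before Lemma~\ref{lem:case1} at $(1,1)$ gives $\tfrac{\partial\psi}{\partial\hat y_1}(1,1)=2x_2(b-a)\le 0$ and $\tfrac{\partial\psi}{\partial\hat x_1}(1,1)=x_2(a^2-b^2)\ge 0$. Writing the KKT stationarity condition with the two active boundary constraints $\hat y_1-\hat x_1\le 0$ and $\hat x_1-1\le 0$ (and zero multipliers on all remaining constraints, which at $(1,1)$ are slack or redundant), one solves for $\mu_1=2x_2(a-b)\ge 0$ and $\mu_2=\mu_1-\tfrac{\partial\psi}{\partial\hat x_1}(1,1)=x_2(a-b)\bigl(2-(a+b)\bigr)$. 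Then $\mu_2\ge 0$ because $a+b\le 2$, which holds since $a\le 1$ (as $y_1\le x_1$) and $b\le 1$ (as $y_2\ge 0$). Since $\psi$ in the form \eqref{eq:psiForm} is convex on the feasible polytope --- each term being a composition of the closed perspective function $(u,v)\mapsto u^2/v$ with affine maps, with positive denominators $x_1-x_2\hat x_1\ge x_1-x_2>0$ and $x_2\hat x_1\ge 0$ --- the existence of the nonnegative multipliers $\mu_1,\mu_2$ certifies that $(1,1)$ is a global minimizer, and substituting into \eqref{eq:psiForm} gives the objective value $\tfrac{(y_1-x_2)^2}{x_1-x_2}+\tfrac{(x_2-y_2)^2}{x_2}$.

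The one genuinely delicate step is verifying $\mu_2\ge 0$, i.e.\ the bound $a+b\le 2$: unlike the previous two cases, optimality cannot be read off from a vanishing gradient, so one must check that the curvature of $\psi$ in the $\hat x_1$ direction is not so large as to make moving off the constraint $\hat x_1=1$ profitable. Everything else --- the two algebraic consequences of the hypothesis, feasibility of $(1,1)$, and the gradient evaluation --- is routine once the abbreviations $a$ and $b$ are introduced.
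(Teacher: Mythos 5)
Your proof is correct and follows essentially the same route as the paper: derive $y_1\ge x_2$ and $\tfrac{y_1-x_2}{x_1-x_2}\ge\tfrac{x_2-y_2}{x_2}$ from the hypothesis, check feasibility of $(1,1)$, exhibit nonnegative multipliers for the two active constraints $\hat y_1\le\hat x_1$ and $\hat x_1\le 1$, and conclude by convexity of $\psi$. Indeed your multiplier $\mu_2=x_2(a-b)\bigl(2-(a+b)\bigr)$ is exactly the paper's $\mu_2^*$, since $2-(a+b)=\tfrac{x_1-y_1}{x_1-x_2}+\tfrac{y_2}{x_2}$; your factored form just makes the nonnegativity check ($a\le 1$, $b\le 1$) slightly more transparent.
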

\begin{proof}
	Note that since $x_2\geq y_2$ and $y_2(x_1-x_2)+y_1x_2\geq x_2x_1$, we have $x_2(x_1-x_2)+y_1x_2\geq x_2x_1\Leftrightarrow y_1\geq x_2$ and, in particular, $\hat{y}_1^*\leq \frac{y_1}{x_2}$. Additionally, it is easily checked that all other constraints \eqref{eq:constraints1}--\eqref{eq:constraints2} are satisfied.  From  $y_2(x_1-x_2)+y_1x_2\geq x_2x_1$ we find that $\frac{x_2-y_2}{x_2}\leq\frac{y_1-x_2}{x_1-x_2}$.
	Now let $\mu_1$ and $\mu_2$ be the dual variables associated with constraints $\hat{y}_1\leq \hat{x}_1$ and $\hat{x}_1\leq 1$, respectively. Since both constraints are satisfied at equality at $(\hat{x}_1^*,\hat{y}_1^*)$, then we see that the dual variables $\mu_1$ and $\mu_2$ may take positive values without violating complementary slackness. In particular, let $\mu_1^*=2x_2\left(\frac{y_1-x_2}{x_1-x_2}-\frac{x_2-y_2}{x_2}\right)\geq 0$ and $\mu_2^*=x_2\left(\frac{y_1-x_2}{x_1-x_2}-\frac{x_2-y_2}{x_2}\right)\left(\frac{x_1-y_1}{x_1-x_2}+\frac{y_2}{x_2}\right)\geq 0$. Then, 
	$
	\frac{\partial \psi}{\partial \hat{y}_1}(\hat{x}_1^*,\hat{y}_1^*)=\mu_1^* \text{ and }
	\frac{\partial \psi}{\partial \hat{x}_1}(\hat{x}_1^*,\hat{y}_1^*)=-\mu_1^*+\mu_2^*.
	$
	Thus $(\hat{x}_1^*,\hat{y}_1^*)$ corresponds to a KKT point and, by convexity, is  optimal. Substituting in \eqref{eq:psiForm} gives the result.\
\end{proof}
Note that Lemmas~\ref{lem:case1}, \ref{lem:case2} and \ref{lem:case3} cover all cases with $y_1\geq y_2$. We can now prove the main result.

\begin{proof}[Theorem~\ref{theo:convexHullBounded}]
	If $y_1\geq y_2$, the description of the convex hull follows directly from Lemmas~\ref{lem:case1}, \ref{lem:case2} and \ref{lem:case3}. If $y_1\leq y_2$, the result follows from symmetry.\
\end{proof}

\subsection{Valid inequalities for $S$}
\label{sec:counterexample}

Similar to the discussion in Section~\ref{sec:validUnbounded}, the description of $\conv(X)$ can be used to derive strong extended convex relaxations for $S$. In order to obtain \rev{(nonlinear)} inequalities in the original space of variables, we project out the auxiliary variables for a given ordering $y_1\geq\ldots\geq y_n$ of the continuous variables with additional restrictions corresponding to conditions $x_j(x_i-y_i)\leq y_j(x_i-x_j)$ in \eqref{eq:defG}. Finally, to obtain linear inequalities valid independent of the conditions, we derive the first order approximations.

Suppose $y_1\geq\ldots\geq y_n$, and $x_j(x_i-y_i)\leq y_j (x_i-x_j)$ for $j>i$, which holds, in particular, if $x=y$. By eliminating the auxiliary variables under these conditions we obtain the inequality
\begin{equation}
\label{eq:nonlinearValidBounded}
\phi(x,y)=\sum_{i \in \bar P} \bar Q_i y_i +
\sum_{i \in P} \bar Q_i y_i^2/x_i -
\sum_{i=1}^n\sum\limits_{j=i+1}^n Q_{ij}\left(\frac{(y_1-x_2)^2}{x_1-x_2}+\frac{(x_2-y_2)^2}{x_2}\right)\leq t.
\end{equation}
\rev{Inequality \eqref{eq:nonlinearValidBounded} is only valid for the particular permutation of the continuous variables and when conditions $x_j(x_i-y_i)\leq y_j (x_i-x_j)$ for $j>i$ hold. Since 
	$
	\sum_{i \in \bar P} \bar Q_i \bar y_i +
	\sum_{i \in P} \bar Q_i \bar y_i^2/\bar x_i 
	- \sum_{i=1}^n \sum_{j=i+1}^n Q_{ij} g(\bar x_i,\bar x_j,\bar y_i,\bar y_j)=\phi(\bar x, \bar y)$, we can find valid subgradient inequalities by taking gradients of the left-hand-side of \eqref{eq:nonlinearValidBounded}. }
Let  $\pi_i=Q_{ii}+2\sum_{j=i}^{i-1}Q_{ij}$ and $\alpha_i=2\sum_{j=1}^iQ_{ij}$, and recall $\bar Q_i=\sum_{j=1}^nQ_{ij}$. The partial derivatives of $\phi$ evaluated at a point $(\bar{x},\bar{y})$ where $\bar{x}=\bar{y}$ are as follows:
\begin{align*}
\frac{\partial \phi}{\partial x_i}(\bar{x},\bar{y})&=\sum_{j=i+1}^n Q_{ij}+\sum_{j=i+1}^{i-1}Q_{ij}-\bar{Q}_i=-Q_{ii}=\pi-\alpha_i,&\quad i\in P\\
\frac{\partial \phi}{\partial x_i}(\bar{x},\bar{y})&=\sum_{j=i+1}^n Q_{ij}+\sum_{j=i+1}^{i-1}Q_{ij}=\pi-\alpha_i+\bar Q_i,&\quad i\in \bar P\\
\frac{\partial \phi}{\partial y_i}(\bar{x},\bar{y})&=-2\sum_{j=i+1}^n Q_{ij}+2\bar{Q}_i=-\alpha_i,&\quad i \in P\\
\frac{\partial \phi}{\partial y_i}(\bar{x},\bar{y})&=-2\sum_{j=i+1}^n Q_{ij}+\bar{Q}_i=-\alpha_i-\bar Q_i,&\quad i \in \bar P.
\end{align*}
Thus, since $\phi(\bar{x},\bar{y})+\nabla \phi(\bar{x},\bar{y})(x-\bar{x},y-\bar{y})\leq g(x,y)\leq t$, we obtain the linear inequality
\begin{equation}
\label{eq:polymatroidX}
\sum_{i=1}^n\pi_i x_i\leq t+\sum_{i=1}^n\alpha_i(x_i-y_i)-\sum_{i\in \bar P}\bar Q_i(x_i-y_i).
\end{equation}
Observe that inequality \eqref{eq:polymatroidX} depends only on the ordering of $\bar{x}$, but not on the actual values.

\begin{remark}
	Consider the submodular function given by $q(x)=x'Qx$. The extreme points of the extended polymatroid  \citep{Edmonds1970} associated with $q$, $\Pi$, correspond to the vectors $\pi$ in inequality \eqref{eq:polymatroidX}; thus, the convex lower envelope of $q$ is described by the function $\bar{q}(x)=\max_{\pi\in \Pi}\pi'x$ \cite{L:submodular-convex}. \citet{AB:prob-nd} employ these polymatroid inequalities for the binary case. For the  \rev{mixed-integer} case, the inequality \eqref{eq:polymatroidX} is tight for the binary restriction $x=y$, and the right hand side is relaxed as the distance between $x$ and $y$ increases.
\end{remark}

\begin{remark}
The values $\alpha_i$ in inequality \eqref{eq:polymatroidX} corresponds to the value of derivative of $q(x)$ with respect to $x_i$ when $x_j=1$ for all $j\leq i$ and $x_j=0$ for $j>i$. Atamt{\"u}rk and Jeon \cite{Atamturk2017} use lifting to derive similar inequalities for another class of nonlinear functions with \rev{indicator} variables and submodular binary restriction.
\end{remark}

\section{Valid \rev{conic quadratic} inequalities for $X$}
\label{sec:valid}

The inequalities $f(x,y)\leq t$ and $g(x,y)\leq t$ derived in Sections~\ref{sec:convexHullUnbounded} and \ref{sec:convexHullBounded} 
for $X_U$ and $X$, respectively,
cannot be directly used within off-the-shelf solvers \rev{in the original space of variables} as they are piecewise functions. However, since they are convex, they can be implemented using gradient outer-approximations at differentiable points (as discussed in Sections~\ref{sec:validUnbounded} and \ref{sec:counterexample}): given a fractional point $(\bar{x},\bar{y})$ with $\bar{x}>0$ \rev{and a subgradient $\xi \in \partial g(\bar{x},\bar{y})$}, the inequality 
\begin{equation}
\label{eq:gradient}
g(\bar{x},\bar{y})+\xi'(x-\bar{x},y-\bar{y})\leq t
\end{equation}
can be used as a cutting plane to improve the continuous relaxation. However, such an approach may require adding too many inequalities \eqref{eq:gradient} to the formulation, possibly resulting in poor performance (see also Sections~\ref{subsec:dualNetwork} and \ref{subsec:dense} for additional discussion on computations). \rev{Alternatively, an extended formulation could be used \cite[e.g.,][]{Ceria1999,FGH:2x2decomp}; however, such formulations may require a prohibitively large number of variables, resulting in hard-to-solve convex formulations and poor performance in branch-and-bound algorithms.}  
Therefore, in this section we give valid conic quadratic inequalities that provide a strong approximation of $\conv(X)$ and can be readily used within conic quadratic solvers.

\subsection{Derivation of the inequalities}
Let $L_2=\left\{(x,y,t)\in X: x_2=0\right\}$ and observe that $$\conv(L_2)=\left\{(x,y,t)\in [0,1]^2\times \R_+^2\times \R:\frac{y_1^2}{x_1}\leq t,\; y_1\leq x_1,\;  x_2=y_2=0\right\}.$$ We now consider inequalities obtained by lifting the valid inequality $\frac{y_1^2}{x_1}\leq t$ for $\conv(L_2)$, i.e., inequalities of the form 
\begin{equation}
\label{eq:form}\frac{y_1^2}{x_1}+h(x_2,y_2)\leq t\end{equation} for $X$, where $h:[0,1]\times \R_+\to \R$. We additionally require the left hand side of \eqref{eq:form} to be convex, which is the case if and only if $h$ is convex. 

\begin{proposition}
	\label{prop:lifting}
	Inequality 
	\begin{equation}
	\label{eq:valid1}
	\frac{y_1^2}{x_1}+\frac{y_2^2}{x_2}-2y_2\leq t
	\end{equation}
	is valid for $X$ and is the strongest convex inequality of the form \eqref{eq:form}.
\end{proposition}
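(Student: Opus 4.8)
The plan is to establish two things separately: (i) validity of \eqref{eq:valid1} for $X$, and (ii) that among all functions $h$ making \eqref{eq:form} valid and convex, the choice $h(x_2,y_2)=\nicefrac{y_2^2}{x_2}-2y_2$ is pointwise largest. For validity, I would argue case by case on $x_2$. When $x_2=0$, feasibility of $X$ forces $y_2=0$, and \eqref{eq:valid1} reduces to $\nicefrac{y_1^2}{x_1}\leq t$, which holds since $(y_1-y_2)^2=y_1^2\leq t$ and $x_1\geq y_1$ give $\nicefrac{y_1^2}{x_1}\leq y_1^2\leq t$ (using $0\le y_1\le x_1\le 1$). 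When $x_2=1$, \eqref{eq:valid1} becomes $\nicefrac{y_1^2}{x_1}+y_2^2-2y_2\leq t$; here I want to show $\nicefrac{y_1^2}{x_1}+y_2^2-2y_2\leq (y_1-y_2)^2$, i.e. $\nicefrac{y_1^2}{x_1}-y_1^2\leq -2y_1y_2+2y_2=2y_2(1-y_1)$. The left side is $y_1^2(\nicefrac{1}{x_1}-1)=y_1^2\frac{1-x_1}{x_1}\geq 0$, and since the right side is also $\geq 0$ this needs a genuine estimate. I would use $\nicefrac{y_1^2}{x_1}\le y_1$ (because $y_1\le x_1$) to bound the left side by $y_1-y_1^2=y_1(1-y_1)$, and then check $y_1(1-y_1)\le 2y_2(1-y_1)$, which is not true in general — so instead I should go through $\conv(X)$ and use the already-proven description in Theorem~\ref{theo:convexHullBounded}, checking \eqref{eq:valid1} holds at all integral-in-$x$ points of $X$ directly, which suffices by convexity of the left-hand side. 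At $x=(1,1)$ the inequality is $y_1^2+y_2^2-2y_2\le(y_1-y_2)^2=y_1^2-2y_1y_2+y_2^2$, i.e. $-2y_2\le-2y_1y_2$, i.e. $y_1y_2\le y_2$, true since $y_1\le 1$; at $x=(1,0)$: $y_2=0$, reduces to $y_1^2\le y_1^2$; at $x=(0,1)$: $y_1=0$, reduces to $y_2^2-2y_2\le y_2^2$, true; at $x=(0,0)$: both sides $0\le0$. Hence validity follows by convexity.

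For the "strongest" claim, suppose $h$ is convex and \eqref{eq:form} is valid for $X$. I would evaluate the constraint at the integral-in-$x$ extreme configurations that isolate $h$. Taking $x_1=1$, $y_1=0$: the inequality becomes $h(x_2,y_2)\le t$ for all $(x_2,y_2)$ with $0\le y_2\le x_2\le 1$ and $t\ge (0-y_2)^2=y_2^2$, so necessarily $h(x_2,y_2)\le y_2^2$ whenever... no, that gives the wrong direction; I need the opposite. Instead, to force $h$ from below I take $x_1=1$, $y_1=y_2$ (legal when $y_2\le 1$), giving $t\ge (y_1-y_2)^2=0$, so $\nicefrac{y_2^2}{1}+h(x_2,y_2)\le 0$ is required at the point $x=(1,x_2)$, $y=(y_2,y_2)$ — but wait, $y_1\le x_1=1$ needs $y_2\le 1$, fine — hence $h(x_2,y_2)\le -y_2^2$ on the segment $x_2=1$? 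That's too strong. The correct move is: for any fixed $(x_2,y_2)$ with $0\le y_2\le x_2\le 1$, consider the point $(x_1,x_2,y_1,y_2,t)\in X$ with $x_1=x_2$... hmm. Let me reconsider: the right approach is that $h(x_2,y_2)$ must be $\le t - \nicefrac{y_1^2}{x_1}$ over all feasible completions, so the \emph{largest valid $h$} is $h^*(x_2,y_2)=\inf\{t-\nicefrac{y_1^2}{x_1}: (x,y,t)\in X\}$ over completions with the given $(x_2,y_2)$, and then one must check this infimum equals $\nicefrac{y_2^2}{x_2}-2y_2$ and is convex. Computing $h^*$: minimize $(y_1-y_2)^2-\nicefrac{y_1^2}{x_1}$ over $0\le y_1\le x_1\le 1$; this is where the real work lies.

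The main obstacle is the lifting computation: showing that the optimal lifting function $h^*(x_2,y_2)=\min_{0\le y_1\le x_1\le 1}\{(y_1-y_2)^2-\nicefrac{y_1^2}{x_1}\}$ equals exactly $\nicefrac{y_2^2}{x_2}-2y_2$, and that this is convex and no better convex $h$ exists (one must be careful that $h^*$ itself may fail convexity, in which case the best convex $h$ is its convex envelope — so part of the argument is verifying $\nicefrac{y_2^2}{x_2}-2y_2$ is already convex, which it is as a sum of the perspective function $p(x_2,y_2)$ and a linear term, and that it coincides with $h^*$). For the inner minimization, take $x_1=1$ to make $\nicefrac{y_1^2}{x_1}=y_1^2$ as large as possible, reducing to $\min_{0\le y_1\le 1}\{(y_1-y_2)^2-y_1^2\}=\min_{0\le y_1\le 1}\{-2y_1y_2+y_2^2\}=y_2^2-2y_2$ (optimum at $y_1=1$, using $y_2\ge 0$). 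One then verifies $x_1=1$, $y_1=1$ is indeed optimal against also letting $x_1<1$, and that this value $y_2^2-2y_2$ agrees with $\nicefrac{y_2^2}{x_2}-2y_2$ — which it does NOT unless $x_2=1$; so the dependence on $x_2$ enters because $(x_2,y_2)$ must be a \emph{feasible} pair, and the envelope over the feasible region (accounting for points where $x_2$ is fractional, arising as convex combinations) produces the perspective term. I would therefore compute $h^*$ as the value function of the convexified problem: $h^*(x_2,y_2)=\inf\{t-\nicefrac{y_1^2}{x_1}:(x,y,t)\in\conv(X)\}$ with $(x_2,y_2)$ fixed, use Theorem~\ref{theo:convexHullBounded} to write this with $g$, and simplify; the algebra that $g(x,y)-\nicefrac{y_1^2}{x_1}$ minimized over $(x_1,y_1)$ gives $\nicefrac{y_2^2}{x_2}-2y_2$ is the routine-but-delicate core, and convexity of the answer then certifies it is the strongest convex lifting.
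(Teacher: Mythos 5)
Your validity argument is fine: every point of $X$ has $x\in\{0,1\}^2$, so checking the four binary configurations (as you do after your self-correction) establishes that \eqref{eq:valid1} holds on $X$, and convexity of the left-hand side is only needed to conclude it holds on $\conv(X)$. The "strongest" half, however, has a genuine gap in two places. First, the exact lifting value must be computed as $\min\{(y_1-y_2)^2-y_1^2/x_1\}$ over $0\le y_1\le x_1$ with $x_1\in\{0,1\}$ only, because validity is required only at points of $X$. Your displayed minimization allows fractional $x_1$, and there the minimum is strictly smaller: taking $x_1=y_1=y_2+\tfrac12$ gives the value $-\tfrac14-y_2$, and $(y_2^2-2y_2)-(-\tfrac14-y_2)=(y_2-\tfrac12)^2\ge 0$, so your proposed verification that ``$x_1=1$, $y_1=1$ is optimal against also letting $x_1<1$'' fails, and carried through literally your setup would impose a strictly stronger (and incorrect) requirement on $h$ than $h\le y_2^2-2y_2$. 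Restricting to $x_1\in\{0,1\}$ (value $y_2^2$ at $x_1=0$, value $y_2^2-2y_1y_2$ at $x_1=1$, minimized at $y_1=1$) gives the correct bound $h(x_2,y_2)\le y_2^2-2y_2$, valid for $x_2\in\{0,1\}$, $0\le y_2\le x_2$.

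Second, the step you leave as ``routine-but-delicate algebra'' is precisely the content of the proposition: passing from the bound $y_2^2-2y_2$ at binary $x_2$ to the perspective term $y_2^2/x_2-2y_2$ at fractional $x_2$. The paper closes this in one line without invoking Theorem~\ref{theo:convexHullBounded}: the largest convex $h$ whose epigraph contains $W=\{(x_2,y_2,t_2)\in\{0,1\}\times\R_+^2:\ y_2^2-2y_2\le t_2,\ y_2\le x_2\}$ is the one whose epigraph equals $\conv(W)$, and by the standard perspective reformulation (applicable because $y_2=0$ is forced when $x_2=0$) $\conv(W)=\{(x_2,y_2,t_2)\in[0,1]\times\R_+^2:\ y_2^2/x_2-2y_2\le t_2,\ y_2\le x_2\}$. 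Your alternative of minimizing $t-y_1^2/x_1$ over slices of $\conv(X)$ is a much heavier route (and has its own subtlety, since a slice of $\conv(X)$ at fixed $(x_2,y_2)$ is not the convex hull of the corresponding slice of $X$), and in any case is not carried out. To complete your proof, replace the continuous-$x_1$ minimization by the binary one and then supply the one-dimensional perspective argument for $W$.
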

\begin{proof}
	Any valid inequality of the form \eqref{eq:form} needs to satisfy
	\begin{align*}
	h(x_2,y_2)\leq \alpha =\min\; & \left \{ (y_1-y_2)^2 -\frac{y_1^2}{x_1} \ : \
	 0\leq y_1\leq x_1,\; x_1\in \{0,1\} \right \} \cdot
	\end{align*}
	If $x_1=0$, then $\alpha=y_2^2$; else, $\alpha=-2y_1y_2+y_2^2$. Thus, $y_1=x_1=1$ is a minimizer. We also find that $h(x_2,y_2)\leq y_2^2-2y_2$ for $x_2\in \{0,1\}$. To find the strongest convex inequality, we compute $\text{conv}(W)$, where $$W=\left\{(x_2,y_2,t_2)\in \{0,1\}\times \R_+^2: y_2^2-2y_2\leq t_2,\; y_2\leq x_2\right\}.$$ Using the perspective reformulation, one sees that 
	$$\text{conv}(W)=\left\{(x_2,y_2,t_2)\in [0,1]\times \R_+^2: \frac{y_2^2}{x_2}-2y_2\leq t_2,\; y_2\leq x_2\right\},$$ and we get inequality \eqref{eq:valid1}. \
\end{proof}

By changing the lifting order, we also get that valid inequality $
\frac{y_1^2}{x_1}+\frac{y_2^2}{x_2}-2y_1\leq t
$, or, writing the inequalities more compactly, we arrive at the convex valid inequality
\begin{equation}
\label{eq:valid12}
\frac{y_1^2}{x_1}+\frac{y_2^2}{x_2}-2\min\{y_1,y_2\}\leq t.
\end{equation} 

\begin{remark}
	Observe that inequality \eqref{eq:valid12} dominates inequality \eqref{eq:jeff2} since
	\begin{align*}
	\frac{y_1^2}{x_1}-x_2=\frac{y_1^2}{x_1}-y_2-(x_2-y_2)\leq \frac{y_1^2}{x_1}-y_2-(x_2-y_2)\frac{y_2}{x_2}= \frac{y_1^2}{x_1}+\frac{y_2^2}{x_2}-2y_2.
	\end{align*}
	Similarly,  we find that \eqref{eq:valid12} dominates inequality \eqref{eq:jeff1}.
	\ignore{Similarly,  \eqref{eq:valid2} dominates inequality \eqref{eq:jeff1}.}
\end{remark}

\begin{remark} For the binary case, $y_i=x_i$, $i=1,2$,
	inequality \eqref{eq:valid12} reduces to $|x_1 - x_2| \le t$.
\end{remark}

\subsection{Strength of the inequalities}

In order to assess the strength of inequality \eqref{eq:valid12}, we consider the optimization problem 
\begin{align*}
\min\;&a_1x_1+a_2x_2+b_1y_1+b_2y_2+t\\
\text{s.t.}\;& (y_1-y_2)^2\leq t\\
\text{(SR)} \ \ \ \ \ \ \ \ \ &\frac{y_1^2}{x_1}+\frac{y_2^2}{x_2}-2\min\{y_1,y_2\}\leq t\\
& 0\leq y_1\leq x_1\leq 1\\
& 0 \leq y_2\leq x_2 \leq 1.
\end{align*}
\rev{Inequalities \eqref{eq:valid12} are not sufficient to guarantee the integrality of $x$ in the optimal solutions of (SR) for all values of $a$ and $b$, since they do not describe $\conv(X)$ (given in Section~\ref{sec:convexHullBounded}). However, we now show that optimal solutions of (SR) are indeed integral} under mild assumptions on the coefficients $a$ and $b$. First, we prove an auxiliary lemma.

\begin{lemma}
	\label{lem:bound}
	If there exists an optimal solution to (SR) with $y_i \in \{0,1\}$ for some $i\in\{1,2\}$, then there exists an optimal solution that is integral in $x$.
\end{lemma}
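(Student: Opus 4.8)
The plan is to exploit the structure of (SR) once one continuous variable is fixed at an integer value, reducing the bivariate problem to a tractable univariate one whose optimal solution can be taken integral. Without loss of generality suppose the optimal solution has $y_2 \in \{0,1\}$; the case $y_1 \in \{0,1\}$ is symmetric since the feasible region and objective are symmetric in the indices.

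First I would handle $y_2 = 0$. In this case $\min\{y_1, y_2\} = 0$, so constraint \eqref{eq:valid12} becomes $\nicefrac{y_1^2}{x_1} + \nicefrac{0}{x_2} \le t$, i.e. $\nicefrac{y_1^2}{x_1} \le t$, and the constraint $(y_1 - y_2)^2 \le t$ becomes $y_1^2 \le t$. Since $0 \le y_1 \le x_1 \le 1$, the binding constraint is the perspective term $\nicefrac{y_1^2}{x_1} \le t$ (it dominates $y_1^2 \le t$), and the remaining problem in $(x_1, x_2, y_1)$ with $x_2$ appearing only in $a_2 x_2$ and the bound $y_2 = 0 \le x_2$ decouples: $x_2$ is set to $0$ if $a_2 \ge 0$ and to $1$ otherwise, which is integral. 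For the $(x_1, y_1, t)$ part, $t = \nicefrac{y_1^2}{x_1}$ at optimality, and $\min\{a_1 x_1 + b_1 y_1 + \nicefrac{y_1^2}{x_1} : 0 \le y_1 \le x_1 \le 1\}$ is minimized over an epigraph of a perspective function — this is exactly the situation covered by the standard perspective/convex-hull argument: fixing $x_1 \in (0,1)$ and optimizing $y_1$ gives $y_1 = -b_1 x_1 / 2$ (projected to $[0, x_1]$), and substituting back leaves a linear function of $x_1$ on $[0,1]$, whose optimum is attained at an endpoint, hence $x_1 \in \{0,1\}$. So an integral-in-$x$ optimal solution exists.

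Next I would handle $y_2 = 1$, which forces $x_2 = 1$ (from $y_2 \le x_2 \le 1$); so $x_2$ is already integral and we must show $x_1$ can be taken integral. Here the objective picks up the constant $a_2 + b_2$, constraint \eqref{eq:valid12} reads $\nicefrac{y_1^2}{x_1} + \nicefrac{1}{x_2} - 2\min\{y_1, 1\} = \nicefrac{y_1^2}{x_1} + 1 - 2 y_1 \le t$ (using $x_2 = 1$ and $y_1 \le x_1 \le 1$), and $(y_1 - 1)^2 \le t$ is equivalent to $y_1^2 - 2y_1 + 1 \le t$; comparing, $\nicefrac{y_1^2}{x_1} \ge y_1^2$ shows \eqref{eq:valid12} is the tighter constraint. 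Thus at optimality $t = \nicefrac{y_1^2}{x_1} + 1 - 2 y_1$, and the problem reduces to $\min\{ a_1 x_1 + (b_1 - 2) y_1 + \nicefrac{y_1^2}{x_1} : 0 \le y_1 \le x_1 \le 1\}$ plus a constant — again a perspective-function minimization over the same domain, so by the identical endpoint argument an optimal $x_1 \in \{0,1\}$ exists.

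The main obstacle — and the reason the lemma is phrased as a stepping stone rather than the final integrality statement — is that the two cases must be argued carefully to ensure that after fixing $y_i \in \{0,1\}$ the residual problem genuinely has the perspective/epigraph structure and that \eqref{eq:valid12} (not the original $(y_1-y_2)^2 \le t$) is the active constraint; once that is established, integrality of $x$ follows from the well-known fact that minimizing a linear function over the epigraph of a univariate perspective function $\nicefrac{y^2}{x}$ on $\{0 \le y \le x \le 1\}$ admits an optimal solution with $x \in \{0,1\}$ (indeed this set is $\conv\{(x,y,t) : y^2 \le t, y \le x, x \in \{0,1\}\}$). I would present the $y_2 = 0$ case in full and remark that $y_2 = 1$, $y_1 = 0$, and $y_1 = 1$ follow by the same reasoning together with the index symmetry.
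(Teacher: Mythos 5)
Your proof is correct and follows essentially the same route as the paper: fix the integer-valued continuous variable, observe that the problem decouples into a one-dimensional perspective minimization $\min\{ax+by+\nicefrac{y^2}{x}: 0\le y\le x\le 1\}$ (with objective coefficient $b$ or $b-2$ depending on whether the fixed variable is $0$ or $1$), and invoke the standard fact that such a problem has an optimal solution with $x$ integral. The only difference is cosmetic — you work out the $y_2$ cases where the paper works out the $y_1$ cases and appeals to symmetry, and you verify explicitly that \eqref{eq:valid12} dominates $(y_1-y_2)^2\le t$, a step the paper leaves implicit.
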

\begin{proof}
	If $y_1=0$, then clearly there is an optimal solution with $x_1 \in \{0,1\}$, depending on the sign of $a_1$. Moreover, (SR) reduces to 
	$\min_{0\leq y_2\leq x_2\leq 1}\left\{a_2x_2+b_2y_2+y_2^2/x_2\right\},$
	which has an optimal integral solution in $x_2$. On the other hand,
	if $y_1=x_1=1$, then (SR) reduces to 
	$\min_{0\leq y_2\leq x_2\leq 1}\left\{a_2x_2+(b_2-2)y_2+y_2^2/x_2\right\},$
	which, again, has an optimal integral solution in $x_2$.
	The case with $y_2 \in \{0,1\}$ is symmetric. \
\end{proof}

\begin{proposition}
	\label{prop:sameSign}
	 If $a_1,a_2$ have the same sign and $b_1,b_2$ have the same sign, then (SR) has an optimal solution that is integral in $x$.
\end{proposition}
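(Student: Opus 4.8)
The plan is to use Lemma~\ref{lem:bound}: it suffices to show that, under the same-sign hypotheses, some optimal solution of (SR) has $y_i\in\{0,1\}$ for some $i$. I would split into the two cases according to the common sign of $a_1,a_2$ (the sign of $b_1,b_2$ will mostly control whether the continuous variables want to be pushed up or down, but the objective $t$-terms couple them). First consider $a_1,a_2\le 0$. Then increasing $x_1$ and $x_2$ never hurts the $a$-contribution; moreover both active nonlinear constraints $(y_1-y_2)^2\le t$ and $\tfrac{y_1^2}{x_1}+\tfrac{y_2^2}{x_2}-2\min\{y_1,y_2\}\le t$ are non-increasing in $x_1,x_2$ (larger denominators), so one may push $x_1=x_2=1$ without increasing the objective; this is already integral in $x$. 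The substantive case is therefore $a_1,a_2\ge 0$, where shrinking the $x_i$ toward the $y_i$ is favored by the $a$-terms but doing so tightens the perspective constraint.

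For $a_1,a_2\ge 0$, at an optimal solution we may assume each nonlinear constraint is tight for $t$ (take $t$ as small as possible), so $t=\max\{(y_1-y_2)^2,\ \tfrac{y_1^2}{x_1}+\tfrac{y_2^2}{x_2}-2\min\{y_1,y_2\}\}$, and also $x_i=y_i$ whenever the perspective term is the binding one and $a_i>0$ (else decrease $x_i$). The idea is a perturbation argument: starting from an optimal $(x,y,t)$ with $0<y_1,y_2$ and $y_1,y_2\notin\{0,1\}$, I would show one can move along a direction that keeps feasibility, does not increase the objective, and drives some $y_i$ to a bound. The two sub-cases are (a) $b_1,b_2\ge 0$: subtracting $\epsilon>0$ from both $y_1,y_2$ (and correspondingly from $x_1,x_2$ when $x_i=y_i$) decreases $b'y$, does not increase $(y_1-y_2)^2$, and does not increase the perspective expression — this is exactly the type of move used in the proof of Theorem~\ref{theo:convexHullUnbounded} — so one can slide until $\min\{y_1,y_2\}=0$; then Lemma~\ref{lem:bound} applies. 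Sub-case (b) $b_1,b_2\le 0$: here the $b$-terms want the $y_i$ large. If the binding constraint is the perspective one, then with $x_i=y_i$ we get $t=|x_1-x_2|$ effectively on the binary-like part, and the objective restricted to $0\le y_i\le x_i=y_i\le 1$ becomes affine-plus-$|y_1-y_2|$, whose optimum over the box $[0,1]^2$ is attained at a vertex, giving $y_i\in\{0,1\}$. If instead $(y_1-y_2)^2$ is the binding constraint, I would argue one can increase the smaller of $y_1,y_2$ (keeping $x_i\le 1$, possibly increasing the relevant $x_i$ up to $1$) without increasing $t$, again reducing $b'y$, until either an $x_i$ hits $1$ (then reduce further / handle as the boundary case) or the two constraints become simultaneously binding, which is the case already covered.

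The main obstacle I anticipate is the bookkeeping in case (b): when $b_1,b_2\le 0$ the two nonlinear constraints pull the continuous variables in opposite implicit directions (one wants $y_1\approx y_2$, the other, via the $-2\min\{y_1,y_2\}$ term, rewards making them unequal), so one must carefully track which constraint is active and show that at any point where neither $y_i$ is at a bound there is still a non-increasing feasible direction toward a bound. I would organize this by reducing, via the tightness conditions and $x_i=y_i$ on the perspective-active coordinates, to an optimization of an explicit piecewise-affine-plus-$|\cdot|$ function over a polytope, where optimality at a vertex forces $y_i\in\{0,1\}$, and then invoke Lemma~\ref{lem:bound}. The same-sign hypotheses are used precisely to rule out the mixed configuration (one $a_i$ positive and one negative, or one $b_i$ positive and one negative) where these monotone perturbations are no longer simultaneously improving.
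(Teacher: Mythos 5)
Your first two cases match the paper: $a_1,a_2\le 0$ gives $x_1=x_2=1$ (both constraints are non-increasing in $x$), and $b_1,b_2\ge 0$ gives $y=0$ directly (no perturbation needed --- $y=0$, $t=0$ is feasible and $b'y+t\ge 0$ for every feasible point, so your $\epsilon$-sliding argument, whose claim that the perspective expression does not increase is actually false when $\nicefrac{y_1}{x_1}+\nicefrac{y_2}{x_2}<1$ because of the $-2\min\{y_1,y_2\}$ term, can simply be discarded). The genuine gap is in the substantive case $a_1,a_2\ge 0$, $b_1,b_2\le 0$. Your reduction there rests on the claim that one may assume $x_i=y_i$ on the coordinates where the perspective constraint is active, which collapses the constraint to $|y_1-y_2|\le t$ and a piecewise-affine problem over the box. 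That claim is false. Take $a=(0,1)$, $b=(-1,0)$ (same-sign hypotheses hold): the optimum of (SR) is $x=(1,0)$, $y=(\tfrac12,0)$, $t=\tfrac14$, with value $-\tfrac14$, whereas any point with $x_1=y_1$ has objective $x_2-y_1+t\ge x_2+\nicefrac{y_2^2}{x_2}-2y_2=\nicefrac{(x_2-y_2)^2}{x_2}\ge 0$. So restricting to $x_1=y_1$ excludes the actual optimum; decreasing $x_1$ toward $y_1$ tightens the binding perspective constraint and forces $t$ up by more than $a_1$ saves. Your fallback perturbation for the other branch --- increase the smaller $y_i$ ``possibly increasing the relevant $x_i$'' --- also fails for the same instance: raising $y_2$ requires raising $x_2$ at cost $a_2=1$ per unit while $t$ falls by at most $1-2y_2$ per unit, so the move strictly increases the objective.

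What is actually needed in this case, and what the paper does, is different on both coordinates. For the smaller variable one pushes the \emph{continuous} variable up to its own bound, $y_2=x_2$, keeping $x_2$ fixed; this is legitimate because on $y_2\le\min\{y_1,x_2\}$ both $(y_1-y_2)^2$ and $\nicefrac{y_2^2}{x_2}-2y_2$ are non-increasing in $y_2$ and $b_2\le 0$. For the larger variable one shows $x_1=1$ (not $x_1=y_1$) via a KKT argument: from any KKT point of the reduced problem (SR$'$) with $x_1<1$, the scaled point $\bigl(\tfrac{y_1+\epsilon}{y_1}x_1,\,y_1+\epsilon,\,y_2+\epsilon\bigr)$ is again a KKT point with the same multipliers, so one can slide until $x_1=1$. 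Only then does the problem collapse to a one-dimensional convex problem in $y_1-y_2$ (the lifted constraint $y_1^2-y_2\le t$ becoming non-binding for $y_1<1$), after which translating $y_1,y_2$ together drives one of them to a bound and Lemma~\ref{lem:bound} finishes. Your outline has the right skeleton (reduce, then invoke Lemma~\ref{lem:bound}) but is missing both of these steps, and the step it substitutes for them is incorrect.
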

\begin{proof}
	Note that if $a_1, a_2\leq 0$, then $x_1=x_2=1$ for an optimal solution of (SR). Also, if $b_1,b_2\geq 0$, then $y_1=y_2=0$ in an optimal solution of (SR), in which case $x$ is integral in extreme point solutions.
	It remains to show that if $a_1,a_2\geq 0$ and $b_1,b_2\leq 0$, then there exists an optimal solution of (SR) that is integral in $x$.
	
	Suppose that $y_1=y_2=y$ in an optimal solution. Then $(y_1-y_2)^2=0$ and $\frac{y^2}{x_1}+\frac{y^2}{x_2}-2y\leq 0$. Thus, $t=0$ and (SR) reduces to
	$$
	\min\left\{a_1x_1+a_2x_2+(b_1+b_2)y : 0\leq y\leq \min\{x_1,x_2\}\leq 1 \right \},
	$$
	which has an optimal solution integral in $x$.
	
	Now suppose, without loss of generality, there is an optimal solution with $1 > y_1>y_2>0$ (if $y_1=1$ or $y_2=0$ then by Lemma~\ref{lem:bound} the solution is integral in $x$). Then observe that, in this case, the functions $(y_1-y_2)^2$ and $y_2^2/x_2-2y_2$ are non-increasing in $y_2$. Since $b_2\leq 0$, there exists a solution where $y_2$ is at its upper bound, i.e., $y_2=x_2$. Thus problem (SR) reduces to 

\[
	\text{(SR$'$)} \ \ 
	\min \left \{ a_1x_1+b_1y_1+(a_2 \! + \!b_2)y_2+t: 
	(y_1 \!- \!y_2)^2\leq t, \frac{y_1^2}{x_1}\!-\!y_2\leq t, y_1 \! \leq\! x_1 \!\leq 1 \right \} \cdot
\]
	Let $(\lambda, \mu, \alpha, \beta)$ be the dual variables associated with the $\leq$ constraints displayed in the order above
	and consider the dual feasibility conditions of problem (SR$'$)
	\begin{align*}
	-a_1&=-\mu_1\frac{y_1^2}{x_1^2}-\alpha+\beta\\
	-b_1&=2\lambda(y_1-y_2)+2\mu\frac{y_1}{x_1}+\alpha\\
	-(a_2+b_2)&=-2\lambda(y_1-y_2)-\mu\\
	1&=\lambda+\mu\\
	0&\leq \lambda,\mu,\alpha,\beta.
	\end{align*}
	Let $(\bar{x_1},\bar{y_1},\bar{y_2},\bar{t})$ be a KKT point  with multipliers $(\bar{\lambda},\bar{\mu},\bar{\alpha}, \bar{\beta})$ and suppose that $\bar{x}_1<1$. Then observe that for small $\epsilon>0$,  $(\frac{\bar{y}_1+\epsilon}{\bar{y}_1}\bar{x_1},\bar{y_1}+\epsilon,\bar{y_2}+\epsilon,\bar{t})$ is also a KKT point with the same multipliers. In particular, by choosing $\epsilon$ so that $1=\frac{\bar{y}+\epsilon}{\bar{y}}\bar{x}$, we see that there is an optimal solution with $x_1=1$. Then, problem (SR$'$) further simplifies to 	
\[
\text{(SR$''$)}	\ \ \ \ \min\{ b_1y_1+(a_2+b_2)y_2+t:
	  (y_1-y_2)^2\leq t,
	y_1^2-y_2\leq t \} \cdot
	\]
	It remains to show that $y_2=x_2$ is integral. Note that 
	$$y_1^2-2y_1y_2+y_2^2=y_1^2-y_2(2y_1-1)\geq y_1^2-y_2,$$
	and, therefore, constraint $y_1^2-y_2\leq t$ is not binding when $y_1 < 1$. 
	So, (SR$''$) is equivalent to $\min b_1y_1+(a_2+b_2)y_2+(y_1-y_2)^2$. However, by 
	increasing or decreasing $y_1$ and $y_2$ by the same amount it is easy to check that there exists an optimal solution where either $y_1=1$ or $y_2=0$, and from Lemma~\ref{lem:bound}  there exists an optimal integral solution. \
\end{proof}

\rev{Proposition~\ref{prop:sameSign} provides an insight on which problems inequalities \eqref{eq:valid12} may be particularly effective: if the coefficients of the binary variables and the continuous variables have the same sign, then the relaxation induced by \eqref{eq:valid12} may be close to ideal; otherwise, using subgradient inequalities may be required to find strong formulations. In our computations, this simple rule of thumb indeed results in the best performance.}

\ignore{
As Example~\ref{ex:notConvexHull} shows, if $a_1$ and $a_2$ have different signs, then the optimal solution to (SR) may not be integral. A similar example can also be constructed if $b_1$ and $b_2$ have different signs. 
\begin{example}
	\label{ex:notConvexHull}
	Consider the optimization problem
	\begin{align*}
	\delta=\min\;& -x_1+x_2-0.5y_1 -0.6y_2+t\\
	\text{s.t.}\;&(y_1-y_2)^2\leq t\\
	&x_i\in \{0,1\}, 0\leq y_i\leq x_i,\; t\geq 0,\quad &i=1,2.
	\end{align*}
	An optimal solution is $x_1^*=x_2^*=y_1^*=y_2^*=1$ and $t^*=0$, with an objective value of $\delta^*=-1.1$. The optimal solution of the continuous relaxation is $\bar{x}_1=\bar{y}_1=1$, $\bar{x}_2=\bar{y}_2=0.80$ and $\bar{t}= 0.04$ with an objective value of $\bar{\delta}=-1.14$. If constraint $\frac{y_1^2}{x_1}+\frac{y_2^2}{x_2}-2\min\{y_1,y_2\}\leq t$ is added, then the optimal solution of the corresponding continuous relaxation is $\hat{x}_1=1$, $\hat{y}_1\approx 0.85$, $\hat{x}_2=\hat{y}_2\approx 0.70$ and $\hat{t}\approx 0.025$, with an objective value of $\hat{\delta}\approx-1.1225$. Thus we see that inequalities \eqref{eq:valid12} help strengthen the continuous relaxation but do not guarantee optimality if the coefficients of the discrete variables in the objective function are of opposite signs. Finally note that $g(\hat{x},\hat{y})\approx 0.08>\hat{t}$, where $g$ is the function defined in \eqref{eq:defG}, and we find that indeed the valid inequality $g(x,y)\leq t$ would further strengthen the continuous relaxation. 
\end{example}
}

\section{Extensions to other quadratic functions with two \rev{indicator} variables}
\label{sec:extensions}

In this paper we focus on the set $X$, i.e., a \rev{mixed-integer }set with non-negative continuous variables and non-positive off-diagonal entries in the quadratic matrix. Although an in-depth study of more general quadratic functions is outside the scope of this paper, the approach used in Section~\ref{sec:valid} can be naturally extended to other quadratic functions. We briefly discuss two such extensions.

\subsection{General quadratic functions}

Observe that a general quadratic function $y'Ay$ can be decomposed as 
\begin{equation*}
y'Ay=\sum_{i=1}^n\left(\left(A_{ii}-\sum_{j\neq i}|A_{ij}|\right)y_i^2-\sum_{j>i:A_{ij}<0} A_{ij}(y_i-y_j)^2+\sum_{j>i:A_{ij}>0} A_{ij}(y_i+y_j)^2\right).
\end{equation*} 
Thus, stronger formulations for general quadratic functions may be obtained by studying the set with two continuous \rev{and two indicator} variables and positive off-diagonal term
\begin{equation*}
X_+=\left\{(x,y,t)\in \{0,1\}^2\times \R_+^2 \times \R: (y_1+y_2)^2\leq t,\; y_i\leq x_i, \ i=1,2\right\}.
\end{equation*}

\begin{proposition}
	Inequality 
	\begin{equation}
	\label{eq:valid+}
	\frac{y_1^2}{x_1}+\frac{y_2^2}{x_2}\leq t
	\end{equation}
	is valid for $X_+$ and is the strongest among inequalities of the form \eqref{eq:form}.
\end{proposition}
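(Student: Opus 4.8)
The plan is to mimic the proof of Proposition~\ref{prop:lifting} verbatim, with the single quadratic term $(y_1+y_2)^2$ replacing $(y_1-y_2)^2$. First I would observe that the set $L_2 = \{(x,y,t) \in X_+ : x_2 = 0\}$ has convex hull $\{(x,y,t) \in [0,1]^2 \times \R_+^2 \times \R : \nicefrac{y_1^2}{x_1} \le t,\ y_1 \le x_1,\ x_2 = y_2 = 0\}$ by the standard perspective reformulation, and that any inequality of the form \eqref{eq:form}, namely $\nicefrac{y_1^2}{x_1} + h(x_2,y_2) \le t$, is valid for $X_+$ if and only if
$$h(x_2,y_2) \le \alpha := \min\left\{ (y_1+y_2)^2 - \frac{y_1^2}{x_1} \ : \ 0 \le y_1 \le x_1,\ x_1 \in \{0,1\} \right\}$$
for all feasible $(x_2,y_2)$.

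Next I would evaluate $\alpha$: if $x_1 = 0$ then the term $\nicefrac{y_1^2}{x_1}$ forces $y_1 = 0$ and $\alpha = y_2^2$; if $x_1 = 1$ then $\alpha = \min_{0 \le y_1 \le 1}\{ 2y_1 y_2 + y_2^2 \} = y_2^2$, attained at $y_1 = 0$. Since $y_2 \ge 0$, in both cases the bound is $\alpha = y_2^2$, so every valid inequality of the form \eqref{eq:form} must satisfy $h(x_2,y_2) \le y_2^2$ for $x_2 \in \{0,1\}$. The strongest convex choice for $h$ is then obtained by convexifying $W_+ = \{(x_2,y_2,t_2) \in \{0,1\} \times \R_+^2 : y_2^2 \le t_2,\ y_2 \le x_2\}$; the perspective reformulation gives $\conv(W_+) = \{(x_2,y_2,t_2) \in [0,1] \times \R_+^2 : \nicefrac{y_2^2}{x_2} \le t_2,\ y_2 \le x_2\}$, so $h(x_2,y_2) = \nicefrac{y_2^2}{x_2}$ and we obtain inequality~\eqref{eq:valid+}. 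Validity also follows directly: for $x_i \in \{0,1\}$, the left side equals $y_1^2 + y_2^2 \le (y_1+y_2)^2 \le t$ since $y_1, y_2 \ge 0$.

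I do not anticipate a genuine obstacle here — the argument is a direct transcription of Proposition~\ref{prop:lifting}. The one point requiring a moment's care is checking that the minimizer of $\alpha$ in the case $x_1 = 1$ is $y_1 = 0$ rather than $y_1 = 1$ (in the $(y_1-y_2)^2$ case it was the opposite corner, $y_1 = x_1 = 1$), which is exactly what flips the $-2\min\{y_1,y_2\}$ correction term in \eqref{eq:valid12} into no correction term at all in \eqref{eq:valid+}; this reflects that with a $+$ sign the cross term helps rather than hurts. The convexity requirement on the left-hand side of \eqref{eq:form} again reduces to convexity of $h$, and $\nicefrac{y_2^2}{x_2}$ is the closure of a perspective function and hence convex, so the claimed inequality is indeed both valid and the strongest convex inequality of its form.
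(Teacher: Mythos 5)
Your proposal is correct and follows exactly the route the paper intends: the paper explicitly omits this proof as "analogous to the proof of Proposition~\ref{prop:lifting}," and your transcription — computing $\alpha=y_2^2$ in both the $x_1=0$ and $x_1=1$ cases (with minimizer $y_1=0$ when $x_1=1$ since the cross term $2y_1y_2$ is nonnegative), then convexifying $W_+$ via the perspective reformulation to get $h(x_2,y_2)=\nicefrac{y_2^2}{x_2}$ — is precisely that analogue. Your side remark about why the $-2\min\{y_1,y_2\}$ correction disappears in the $+$ case is also accurate.
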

The proof is analogous the the proof of Proposition~\ref{prop:lifting} as is omitted for brevity.
\rev{Although} inequality \eqref{eq:valid+} is similar in spirit to \eqref{eq:valid1}, and that it is the strongest among inequalities of the form \eqref{eq:form}, it is not as strong as \eqref{eq:valid1} for $X$. In particular, an integrality result similar to Proposition~\ref{prop:sameSign} does not hold for \eqref{eq:valid+}. 

\ignore{
	\begin{example}
		Consider the optimization problems
		\begin{align*}
		\text{$(P_+^1)$} \ \ \ \ \ \ \ \ \ \min_{(x,y,t)\in X_+}\;&0.5x_1+2x_2-1.9y_1-1.3y_2+t, \\
		\text{$(P_+^2)$} \ \ \ \ \ \ \ \ \ \min_{(x,y,t)\in X_+}\;&0.4x_1+0.4x_2-.3.7y_1-3.65y_2+t.
		\end{align*}
		Inequality \eqref{eq:valid+} is sufficient to get an optimal integer solution in $(P_+^1)$ 
		but does not cut off the fractional solution corresponding to the natural convex relaxation for $(P_+^2)$. 
	\end{example}
}

\subsection{Quadratic functions with continuous variables unrestricted in sign}

Consider the set 
\begin{equation*}
X_{\pm}=\left\{(x,y,t)\in \{0,1\}^2\times \R^2\times \R: (y_1\pm y_2)^2\leq t,\; -x_i\leq y_i\leq x_i \text{ for }i=1,2\right\}.
\end{equation*}
Observe that, since the continuous variables can be positive or negative, the sign inside the quadratic expression does not matter (e.g., it can be flipped via the transformation $\bar{y}_2=-y_2$). Thus we assume, without loss of generality, that it is a minus sign.

\begin{proposition}
	\label{prop:plusminus}
	Inequality \eqref{eq:jeff2}, originally proposed by Jeon et al. \cite{Jeon2017},
	is valid for $X_\pm$ and is the strongest among inequalities of the form \eqref{eq:form}.
\end{proposition}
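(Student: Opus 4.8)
The plan is to mimic the proof of Proposition~\ref{prop:lifting} almost verbatim, adjusting only the two places where the geometry differs. We want inequalities of the form $\nicefrac{y_1^2}{x_1}+h(x_2,y_2)\le t$ valid for $X_\pm$, with $h$ convex so that the left-hand side is convex. As in Proposition~\ref{prop:lifting}, validity forces, for every fixed $(x_2,y_2)$,
\[
h(x_2,y_2)\le \alpha := \min\Big\{(y_1-y_2)^2-\frac{y_1^2}{x_1}\ :\ -x_1\le y_1\le x_1,\ x_1\in\{0,1\}\Big\}.
\]
First I would evaluate $\alpha$ case by case on $x_1$. If $x_1=0$ then $y_1=0$ is forced and $\alpha=y_2^2$. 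If $x_1=1$ then we minimize $-2y_1y_2+y_2^2$ over $-1\le y_1\le 1$; since $y_2$ may be either sign, the minimum is attained at $y_1=\operatorname{sign}(y_2)$ (i.e.\ $y_1=1$ if $y_2\ge0$, $y_1=-1$ if $y_2\le 0$), giving $\alpha=-2|y_2|+y_2^2$. Comparing the two, $-2|y_2|+y_2^2\le y_2^2$, so the binding restriction is $h(x_2,y_2)\le y_2^2-2|y_2|$ for $x_2\in\{0,1\}$.

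Next I would take the strongest convex $h$ by computing $\operatorname{conv}(W_\pm)$ where
\[
W_\pm=\big\{(x_2,y_2,t_2)\in\{0,1\}\times\R\times\R_+:\ y_2^2-2|y_2|\le t_2,\ -x_2\le y_2\le x_2\big\}.
\]
Since $y_2^2-2|y_2|$ is an even convex function vanishing (with value $0$) at $y_2=0$ and equal to $-1$ at $y_2=\pm1$, the perspective reformulation on each sign-branch gives the closure of the perspective: $\operatorname{conv}(W_\pm)=\{\,\nicefrac{y_2^2}{x_2}-2|y_2|\le t_2,\ -x_2\le y_2\le x_2\,\}$ — one checks the two defining points $x_2=0$ (forcing $y_2=0$, $t_2\ge0$) and $x_2=1$ (giving $y_2^2-2|y_2|\le t_2$) are recovered and that the perspective function is the tightest convex underestimator. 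Plugging $h(x_2,y_2)=\nicefrac{y_2^2}{x_2}-2|y_2|$ back into the lifted form yields
\[
\frac{y_1^2}{x_1}+\frac{y_2^2}{x_2}-2|y_2|\le t,
\]
and this is precisely inequality~\eqref{eq:jeff2} once one recalls that $|y_2|\le x_2$, so $-2|y_2|\ge -2x_2$ — wait, that direction is wrong; rather, to match the stated form one should note that the claimed inequality in Proposition~\ref{prop:plusminus} is the one indexed \eqref{eq:jeff2}, namely $\nicefrac{y_1^2}{x_1}-x_2\le t$, and the derivation above shows $\nicefrac{y_1^2}{x_1}+\nicefrac{y_2^2}{x_2}-2|y_2|\le t$ is the strongest of this lifted family; since $|y_2|\le x_2$ gives $\nicefrac{y_2^2}{x_2}\le|y_2|$, we get $\nicefrac{y_2^2}{x_2}-2|y_2|\le -|y_2|\le -\nicefrac{y_2^2}{x_2}$, and in particular the stronger inequality implies $\nicefrac{y_1^2}{x_1}+\nicefrac{y_2^2}{x_2}-2|y_2|\le t$, which by $|y_2|\le x_2$ dominates — I would therefore present the cleaner statement that $\nicefrac{y_1^2}{x_1}+\nicefrac{y_2^2}{x_2}-2|y_2|\le t$ is the strongest of the form \eqref{eq:form}, and note it reduces to \eqref{eq:jeff2} after the bound substitution, exactly as is done after Proposition~\ref{prop:lifting} with the $\min\{y_1,y_2\}$ form.

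I expect the only real obstacle to be bookkeeping of the absolute values and signs: because the variables are now free, the inner minimization in $y_1$ picks up a $\operatorname{sign}(y_2)$, and the convex-hull computation for $W_\pm$ must be done on both sign-branches and then glued together, which is where one must be careful that the resulting even function $\nicefrac{y_2^2}{x_2}-2|y_2|$ is genuinely convex on $[0,1]\times\R$ (it is, being a perspective on each half-line and continuous across $y_2=0$). Beyond that, every step is a routine adaptation of Proposition~\ref{prop:lifting}, so I would state that "the proof is analogous to that of Proposition~\ref{prop:lifting}" and include only the computation of $\alpha$ and the one-line verification of $\operatorname{conv}(W_\pm)$, matching the paper's stated practice of omitting the fully parallel argument for brevity.
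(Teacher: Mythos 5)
Your computation of $\alpha$ is correct and identical to the paper's: for $x_1\in\{0,1\}$ the lifting condition is $h(x_2,y_2)\le y_2^2-2|y_2|$. The genuine gap is in the convexification step that follows. You assert that $y_2^2-2|y_2|$ is an ``even convex function'' and that the perspective construction from Proposition~\ref{prop:lifting} carries over to give $h(x_2,y_2)=\frac{y_2^2}{x_2}-2|y_2|$. Neither claim holds: $y_2^2-2|y_2|=(|y_2|-1)^2-1$ has a strict local maximum at $y_2=0$ (left derivative $+2$, right derivative $-2$), so it is not convex on $[-1,1]$; consequently $\frac{y_2^2}{x_2}-2|y_2|$, being a perspective plus a concave term, is not convex either --- its restriction to $x_2=1$ is exactly the non-convex function above. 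Since the form \eqref{eq:form} explicitly requires $h$ to be convex, your candidate is inadmissible: it is a valid \emph{non-convex} inequality, but it cannot be ``the strongest convex inequality of the form \eqref{eq:form}.'' This is precisely where $X_\pm$ differs from $X$: on $y_2\ge 0$ the term $-2y_2$ is linear, so perspective plus linear stays convex, whereas the absolute value destroys convexity once $y_2$ is free in sign.

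The correct step, which the paper takes, is to compute the convex lower envelope of the univariate function $y_2^2-2|y_2|$ on $[-1,1]$: it equals $-1$ at $y_2=\pm 1$ and is $\ge -1$ throughout, so any convex underestimator is $\le -1$ on the whole interval, and the envelope is the constant $-1$. Convexifying over $x_2\in\{0,1\}$ (with $y_2=0$ forced when $x_2=0$) then yields $h(x_2,y_2)=-x_2$ and hence exactly inequality \eqref{eq:jeff2}, $\frac{y_1^2}{x_1}-x_2\le t$. Your mid-proof hesitation (``that direction is wrong'') was the symptom of this error, but the attempted repair via $|y_2|\le x_2$ does not close it: the problem is not a dominance comparison between two valid inequalities, it is that your $h$ fails the convexity requirement and so the maximization defining the strongest admissible $h$ must be taken over convex underestimators, which caps $h$ at $-x_2$.
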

\begin{proof}
	Any valid inequality for $X_{\pm}$ of the form \eqref{eq:form} needs to satisfy
	\begin{align*}
	h(x_2,y_2)\leq \alpha =\min\; & \left \{ (y_1-y_2)^2 -\frac{y_1^2}{x_1} \
	: \ -x_1\leq y_1\leq x_1,\; x_1\in \{0,1\} \right \} \cdot
	\end{align*}
	If $x_1=0$, then $\alpha=y_2^2$. Else, $\alpha=-2y_1y_2+y_2^2$; in this case, the minimum is attained at $y_1^*=1$ if $y_2\geq 0$ and at $y_1^*=-1$ otherwise. Thus, we find that $h(x_2,y_2)\leq y_2^2-2|y_2|$ for $x_2\in \{0,1\}$. To find the strongest convex inequality, we compute $\text{conv}(W_{\pm})$, where $W_{\pm}=\left\{(y_2,x_2,t_2)\in \{0,1\}\times \R\times \R: y_2^2-2|y_2|\leq t_2,\; -x_2\leq y_2\leq x_2\right\}.$ The convex lower envelope corresponding to the one-dimensional non-convex function $h_1(y_2)=y_2^2-2|y_2|$ for $y_2\in [-1,1]$ is the constant function equal to $-1$. Moreover, it can be shown that 
	$$\text{conv}(W_{\pm})=\left\{(y_2,x_2,t_2)\in [0,1]\times \R_+^2: -x_2\leq t_2,\; -x_2\leq y_2\leq x_2\right\}$$ and we get the convex valid inequality $\frac{y_1^2}{x_1}-x_2\leq t$ for $X_{\pm}$. \
\end{proof}

In light of Proposition~\ref{prop:plusminus},  inequalities \eqref{eq:valid1}-\eqref{eq:valid12} can be interpreted as inequalities that additionally account for the non-negativity of the continuous variables, with respect to the valid inequalities proposed by Jeon et al. \cite{Jeon2017}. Moreover, although not explicitly considered by Jeon et al., their inequalities may be particularly effective for quadratic optimization problems with \rev{indicator variables and} continuous variables unrestricted in sign. \rev{Observe that inequalities \eqref{eq:jeff1}--\eqref{eq:jeff3} are indeed valid even if the variables are not required to be non-negative -- in contrast with the inequalities $f(x,y)\leq t$, $g(x,y)\leq t$ and \eqref{eq:valid12}, which account for the non-negativity of the variables and are only valid in that case.}

\section{Computations}
\label{sec:computations}

In this section we report a summary of computational experiments performed to test the effectiveness of the proposed inequalities in a branch-and-bound algorithm. All experiments are conducted using Gurobi 7.5 solver on a workstation with a 3.60GHz Intel\textregistered \ Xeon\textregistered \ E5-1650 CPU and 32 GB main memory with a single thread. The time limit is set to one hour and Gurobi's default settings are used (except for the parameter ``PreCrush", which is set to 1 in order to use cuts). Cuts (if used) are added only at the root node using the callback features of Gurobi, and the reported times include the time used to add cuts.

\subsection{Image segmentation with $\ell$-0 penalty}
\label{subsec:dualNetwork}

Given a finite set $N$, functions $d_i:\R\to \R_+$ for $i\in N$ and $s_{ij}:\R\to \R_+$ for $i\neq j$, consider 
\begin{align*}
(D)\ \ \ \ \ \ \ \ \ \ \min_{y\in Y}\;&\sum_{i\in N}d_i(y_i)+\sum_{i\neq j}s_{ij}(y_i-y_j),
\end{align*}
where $Y\subseteq \R_+^N$. Problem (D) arises as the Markov Random Fields (MRF) problem for image segmentation, see \cite{boykov2001fast,kolmogorov2004energy}. In the MRF context, $d_i$ are the \emph{deviation} penalty functions, used to model the cost of changing the value of a pixel from the observed value $p_i$ to $y_i$, e.g., $d_i(y_i)=c_i (p_i-y_i)^2$ with $c_i\in \R_+$; functions $s_{ij}$ are the \emph{separation} penalty functions, used to model the cost of having adjacent pixels with different values, e.g., $s_{ij}(y_i-y_j)=c_{ij}(y_i-y_j)^2$ with $c_{ij} > 0$ if pixels $i$ and $j$ are adjacent, and $s_{ij}(y_i-y_j)=0$ otherwise. Often, $Y=[0,1]^N$ or is given by a suitable discretization, i.e., $y$ is a vector of integer multiples of a parameter $\varepsilon$. We consider in our computations the case $Y=[0,1]^N$, but the proposed approach can be used with any $Y$.

Problem (D) can be cast as the nonlinear dual of the undirected minimum cost network flow problem \citep{Ahuja2004} and efficient algorithms exist when all functions are convex \cite{Hochbaum2013}. In contrast,
we consider here the case where the deviation functions involve a non-convex $\ell$-0 penalty, which is often used to induce sparsity, e.g., restricting the number of pixels that can have a color different from the background color.
In particular, 
$d_i(y_i)=a_i\|y_i\|_0+\bar{d}_i(y_i)$ with $\bar{d}_i=c_i(p_i-y_i)^2$. Thus, the problem can be formulated as
\begin{equation}
\label{eq:unconstrained}\min\; \sum_{i\in N}a_ix_i+\sum_{i\in N}c_i(p_i-y_i)^2+\sum_{i\neq j}c_{ij}t_{ij} \text{ s. t. }(x_i,x_j,y_i,y_j,t_{ij})\in X,\; \forall i\neq j.
\end{equation}

\paragraph{\textbf{Instances}} The instances are constructed as follows. The elements of $N$ correspond to points in a $k \times k$ grid, thus $n=k^2$, and separation functions $s_{ij}$ are non-zero whenever the corresponding points are adjacent in the grid. The parameters $p_i$ for $i\in N$, and $c_{ij}$ for each pair of adjacent points $i,j\in N$ are drawn uniformly between 0 and 1. We set $a_i=c_i$, where $c_i$ is generated as follows: first we draw $\tilde{c}_i$ uniformly between $0$ and $1$ for all $i\in N$, let $C_1=\sum_{i\in N}\tilde{c}_i$ and $C_2=\sum_{i:p_i\geq 0.5}(2p_i-1)$; then we set $c_i=\tilde{c}_i\frac{C_1}{C_2}$. Instances generated with these parameters are observed to have large integrality gaps.  

\paragraph{\textbf{Formulations}} We test the following formulations for solving problem \eqref{eq:unconstrained}:
\begin{description}
	\item[\texttt{\rev{Basic}}] The \rev{natural formulation 
		\begin{equation*}
		\label{eq:natural}
		\min\;\sum_{i\in N}a_ix_i+\sum_{i\in N}c_i(p_i-y_i)^2+\sum_{i\neq j}c_{ij}(y_i-y_j)^2 \text{ s.t. }0\leq y\leq x,\; x\in \{0,1\}^N.\end{equation*}}
	\item[\texttt{Perspective}]
	\rev{The perspective reformulation implemented with rotated cone constraints
		\begin{align*}
		\sum_{i\in N}c_ip_i^2+\min\;&\sum_{i\in N}a_ix_i+\sum_{i\in N}c_i\left(-2p_iy_i+z_i\right)+\sum_{i\neq j}c_{ij}(y_i-y_j)^2\\
		 \text{ s.t.}\;&y_i^2\leq z_ix_i,\; \forall i\in N\\
		 &0\leq y\leq x,\;z\geq 0,\; x\in \{0,1\}^N.\end{align*}
	}
	\item[\texttt{Conic}] \rev{The formulation with the conic quadratic inequalities \eqref{eq:valid12}
	\begin{align*}
	\sum_{i\in N}c_ip_i^2+\min\;&\sum_{i\in N}a_ix_i+\sum_{i\in N}c_i\left(-2p_iy_i+z_i\right)+\sum_{i\neq j}c_{ij}t_{ij}\\
	\text{ s.t.}\;&y_i^2\leq z_ix_i,\; \forall i\in N\\
	&(y_i-y_j)^2\leq t_{ij},\; z_i+z_j-2y_i\leq t_{ij},\;z_i+z_j-2y_j\leq t_{ij},\; \forall i\neq j\\
	&0\leq y\leq x,\;z\geq 0,\; x\in \{0,1\}^N.\end{align*}
}
	\ignore{In addition to the perspective reformulation, the conic quadratic inequalities \eqref{eq:valid12} are also added in an extended formulation.}
\end{description}
Furthermore, we also test models \texttt{Perspective+cuts} and \texttt{Conic+cuts}, where the \rev{subgradient} inequalities \eqref{eq:gradient} are used as cutting planes to strengthen the \texttt{Pers\-pective} and \texttt{Conic} formulations, respectively. If $\bar{x}_i=0$ for some $i\in N$ then we use the first-order expansion around $\bar{x}_i=10^{-5}$ instead. 

\paragraph{\textbf{Results}} Table~\ref{tab:QPDual} shows a comparison of the performance of the algorithm for each formulation for varying grid sizes. Each row in the table represents the average for five instances for a grid size. Table~\ref{tab:QPDual} displays the initial gap (\texttt{igap}), the root gap improvement (\texttt{rimp}), the number of branch and bound nodes (\texttt{nodes}), the elapsed time in seconds (\texttt{time}), and the end gap at termination (\texttt{egap}) (in brackets, we report the number of instances solved to optimality within the time limit). The initial gap is computed as $\texttt{igap}=\frac{\texttt{obj}_{\texttt{best}}-\texttt{obj}_{\texttt{cont}}}{\left|\texttt{obj}_{\texttt{best}}\right|}\rev{\times 100}$, where $\texttt{obj}_{\texttt{best}}$ is the objective value of the best feasible solution found and 
$\texttt{obj}_{\texttt{cont}}$ is the objective \rev{of the continuous relaxation of \texttt{Basic}}. The root improvement is computed as $\texttt{rimp}=
\frac{\texttt{obj}_{\texttt{relax}}-\texttt{obj}_{\texttt{cont}}}
{\texttt{obj}_{\texttt{best}}-\texttt{obj}_{\texttt{cont}}}\rev{\times 100}$, where $\texttt{obj}_{\texttt{relax}}$ is the objective value of the relaxation obtained after processing the first node of the branch-and-bound tree for a given formulation, \rev{obtained by querying Gurobi's attribute ``ObjBound" at the root node using a callback}.

We observe that the \texttt{Basic} formulation requires a substantial amount of branching before proving optimality, resulting in long solution times. The \texttt{Perspective} formulation results in a root gap improvement close to 50\% and better times and end gaps than the \texttt{Basic} formulation. However, even with the \texttt{Perspective} formulation, instances with $k \times k=400$ and larger cannot be solved to optimality leaving end gaps 15.3\% or more. In contrast, formulation \texttt{Conic} results in root gap improvements close to 100\%, and the performance of the branch-and-bound algorithm is orders-of-magnitude better than with the \texttt{Basic} and \texttt{Perspective} formulations: instances with $k \times k=400$ that are not close to being solved after one hour of computation with \texttt{Basic} and \texttt{Perspective} are solved to optimality in one second; while formulation \texttt{Basic} is able to solve in five minutes instances with $100$ variables, formulation \texttt{Conic} is able to solve in the same amount of time formulations with $2,500$ variables, i.e., instances 250 times larger.

\begin{table}[h!]
	
	\setlength{\tabcolsep}{1pt}
	\begin{center}
		\caption{Experiments with image segmentation with $\ell$-0 penalty.}
		\label{tab:QPDual}
		\scalebox{0.55}{
			\begin{tabular}{ c c c |c  r r r | c r r r | c r r r| c r r r|c r r r}
				\hline \hline
				 & \multirow{2}{*}{$k \times k$} &
				\multirow{2}{*}{\texttt{igap}} & \multicolumn{4}{c|}{\textbf{\texttt{Basic}}} & \multicolumn{4}{c|}{\textbf{\texttt{Perspective}}}& \multicolumn{4}{c|}{\textbf{\texttt{Perspective+cuts}}}& \multicolumn{4}{c|}{\textbf{\texttt{Conic}}}& \multicolumn{4}{c}{\textbf{\texttt{Conic+cuts}}}\\
				&&&&\texttt{nodes}&\texttt{time}&\texttt{egap}&\texttt{rimp}&\texttt{nodes}&\texttt{time}&\texttt{egap}&\texttt{rimp}&\texttt{nodes}&\texttt{time}&\texttt{egap}&\texttt{rimp}&\texttt{nodes}&\texttt{time}&\texttt{egap}&\texttt{rimp}&\texttt{nodes}&\texttt{time}&\texttt{egap}\\
				\hline
				 & 100 & 51.0 &  & 2,065,285 & 301 & 0.0[5] & 47.9 & 70,898 & 17 & 0.0[5]& 99.6 & 27,006 & 601 & 0.0[5] & 99.4 & 7 & 0 & 0.0[5]& 99.7 & 7 & 0 & 0.0[5] \\
				 & 400 & 47.7 &   & 9,520,774 & 3,600 & 34.0[0] & 48.6 & 5,277,876 & 3,600 & 15.3[0]& 93.2 & 305 & 2 & 0.0[5] & 99.5 & 59 & 1 & 0.0[5]& 99.5 & 58 & 1 & 0.0[5] \\
				 & 2,500 & 47.9 &  & 1,091,872 & 3,600 & 46.3[0] & 45.6 & 682,406 & 3,600 & 25.6[0]& 47.2 & 38,989 & 2,235 & 9.9[2] & 99.3 & 17,561 & 393 & 0.0[5]& 99.6 & 9,220 & 210 & 0.0[5] \\
				& 10,000 & 47.4 &  & 167,529 & 3,600 & 47.2[0] & 45.9 & 131,986 & 3,600 & 25.9[0]& 32.4 & 25,992 & 3,600 & 0.2[0] & 99.5 & 25,842 & 3,600 & 0.1[0]& 99.6 & 26,695 & 3,600 & 0.1[0] \\
				\hline\hline
			\end{tabular}
		}
	\end{center}
\end{table}

Formulation \texttt{Conic+cuts} results in very modest improvement in the strength of the continuous relaxation when compared with \texttt{Conic} (less than 0.3\% additional root gap improvement) and almost no difference in terms of nodes, times or end gaps. Observe that in \eqref{eq:unconstrained} the coefficients of the linear objective terms corresponding to the discrete and continuous variables have the same sign, and the experimental results are consistent with Proposition~\ref{prop:sameSign} --- \texttt{Conic} indeed is a very close approximation of inequalities \eqref{eq:gradient} in this case. 

Note that if cuts are added without the approximation given by inequalities \eqref{eq:valid12} (formulation \texttt{Perspective+cuts}), the root improvement is substantial for small instances but it degrades as the size increases. We conjecture that the required number of cuts to obtain an adequate relaxation increases with the size of the instances. Thus, for larger instances, Gurobi may stop adding cuts before obtaining a strong relaxation. Additionally, to solve second-order conic subproblems in branch-and-bound, solvers like Gurobi construct a linear outer approximation of the convex sets; adding a large number of cuts may interfere with the construction of the outer approximation, leading to weak relaxations of the convex set, which is observed for instances with $k\times k = 10,000$. Using the approximation of the convex hull derived in Section~\ref{sec:valid} as a starting point appears to circumvent such numerical difficulties.

\rev{Finally, we remark that for the larger instances that are not solved to optimality by \texttt{Conic}, high quality solutions and tight lower bounds are found within a few seconds, but branching is ineffective to close the remaining gap. To illustrate, Figure~\ref{fig:timeMRF} presents the time to prove an optimality gap of at most 1\%, as a function of the dimension $n$ of the problem. We see that the proposed approach scales very well (almost linearly) up to $n=20,000$. In particular, the lower bound found corresponds to the one obtained at the root node, and the feasible solutions are found within a small number (50--60) of branch-and-bound nodes. Memory limit is reached for instances with $n>20,000$.}

\begin{figure}[!h ]
	\centering
	\includegraphics[width=0.9\textwidth,trim={8cm 6cm 8cm 6cm},clip]{./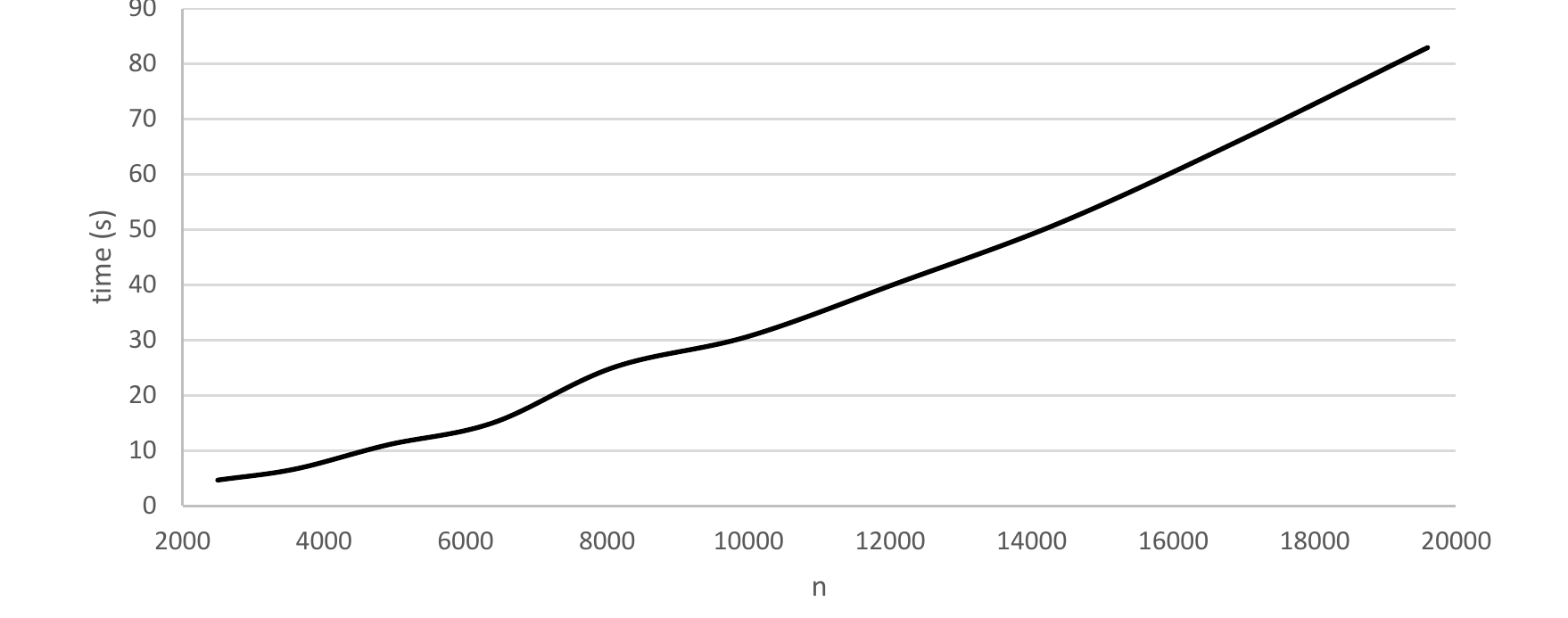}
	\caption{Time to prove an optimality gap of 1\% with \texttt{Conic} as a function of the dimension $n=k\times k$.}
	\label{fig:timeMRF}
\end{figure}

\subsection{Portfolio optimization with transaction costs}
\label{subsec:transaction} Consider a simple portfolio optimization problem with transaction costs similar to the one discussed in \cite[p.146]{cornuejols2006optimization}. However, in our case, transactions have a fixed cost and there is a restricted number of transactions. For simplicity, we \rev{first} consider assets with uncorrelated returns. \rev{In this context, an M-matrix arises directly due to the buying and selling decisions. In Section~\ref{subsec:dense} we present computations with a general covariance matrix, from which an M-matrix corresponding to the negatively correlated assets can be extracted to apply the reformulations.}

Let $N$ be the set of assets, $\mu,\sigma\in \R_+^N$ be the vectors of expected returns and standard deviations of returns. Let $w\in \R_+^N$ denote the current holdings in each asset, let $a^+,a^-\in \R_+^N$ be the fixed transaction costs associated with buying and selling any quantity, $c^+,c^-\in \R^N$ be the variable transaction costs and profits of buying and selling each asset, let $u^+,u^-\in \R_+^N$ be the upper bounds on the transactions, and let $k$ be the maximum number of transactions. Then the problem of finding a minimum risk portfolio that satisfies a given expected return $b\in \R$ with at most $k$ transactions can be formulated as the mixed-integer quadratic problem:
\begin{align*}
\min\;& v(y)=\sum_{i\in N}\sigma_i^2 (w_i+y_i^+-y_i^-)^2\\
\text{s.t.}\;& \sum_{i\in N}\left(\mu_iw_i+y_i^+(\mu_i-c_i^+)-y_i^-(\mu_i-c_i^-)-a_i^+x_i^+-a_i^-x_i^-\right)\geq b\\
&\sum_{i\in N}(x_i^++x_i^-)\leq k\\
& 0\leq y_i^+\leq u_i^+x_i^+,\; 0\leq y_i^-\leq u_i^-x_i^-,\rev{\;x_i^++x_i^-\leq 1,}\quad \forall i\in N\\
&(x^+,x^-,y^+,y^-)\in  \{0,1\}^N\times \{0,1\}^N \times \R_+^N\times \R_+^N,
\end{align*}
where $v(y)$ is the variance of the new portfolio, the decision variables $y_i^+$ ($y_i^-$) indicate the amount bought (sold) in asset $i$ and the variables $x_i^+$ ($x_i^-$) indicate whether asset $i$ is bought (sold). Note that the quadratic objective function is nonseparable and the corresponding quadratic matrix is positive semi-definite but not positive definite; therefore, the classical perspective reformulation cannot be used. Additionally, observe that the portfolio optimization problem can be reformulated by adding continuous variables $t\in \R_+^N$, constraints $(x_i^+,x_i^-,y_i^+,y_i^-,t_{i})\in X$ for all $i\in N$ to minimize the linear objective 
\begin{equation}
\label{eq:portfolioReformulation}\sum_{i\in N} \sigma_i^2(2 w_i(y_i^+-y_i^-)+t_{i}) \cdot \end{equation}
Note that since each continuous variable is involved in exactly one term in the objective, the extended formulation given by \eqref{eq:portfolioReformulation} and constraints $(x_i^+,x_i^-,y_i^+,y_i^-,t_{i})\in \conv(X)$ results in the convex envelope of $v(y)$. 

\paragraph{\textbf{Instances}} The instances are constructed as follows. We set $w_i=u_i^+=u_i^-=1$ for all $i\in N$. Coefficients $\sigma_i$ are drawn uniformly between $0$ and $1$, $\mu_i$ are drawn uniformly between $0$ and $2\sigma_i$, the transactions costs and profits $c_i^+$ and $c_i^-$ are drawn uniformly between $0$ and $\mu_i$, the fixed costs $a_i^+$ and $a_i^-$ are drawn uniformly between $0$ and $(\mu_i-c_i^+)$ and $(\mu_i-c_i^-)$, respectively. The target return is set to $\beta\sum_{i\in N}\mu_i$ where $\beta > 0$ is a parameter; $k$ is set to $n/10$.

\paragraph{\textbf{Formulations}} We test the formulations \texttt{Basic}, \texttt{Basic+cuts}, \texttt{Conic},  and \texttt{Co\-nic+cuts}, as defined in Section~\ref{subsec:dualNetwork}. As mentioned above, the perspective reformulation cannot be used for these instances. 

\paragraph{\textbf{Results}}Table~\ref{tab:QPGurobiPortfolio} shows the results for varying number of assets $n$ and values of the expected return $\beta$. Observe that instances with lower values of $\beta$ are more difficult to solve for the \texttt{Basic} formulation: low $\beta$ results in more feasible solutions, and more branch-and-bound nodes need to be explored before proving optimality. We also see that the \texttt{Basic} formulation is not effective for instances with $250$ or more assets, where most instances (27 out of 30) are not solved to optimality within the time limit and leaving large end gaps at termination. On the other hand, the other three formulations achieve root improvements of over $90\%$ in most cases, and lead to much lower solution times and end gaps.

Observe that for the portfolio problem, the coefficients of $y_i^+$ and $y_i^-$ in the objective and return constraints have opposite signs. Thus, we expect the approximation given by $\texttt{Conic}$ not to be as effective as in Section~\ref{subsec:dualNetwork} and, therefore, the cuts to have a larger impact in closing the root gaps. Indeed, we see in these experiments that adding cuts leads to an additional 2\% to 4\% root improvement (compared to the 0.3\% improvement observed in Section~\ref{subsec:dualNetwork})\footnote{The root gap improvements of 95\% achieved by \texttt{Conic} indicate that the approximation given in Section~\ref{sec:valid} is strong and considerably better than the natural continuous relaxation.}. In particular,  formulation \texttt{Basic+cuts} is able to solve all instances in seconds, even instances with low values of $\beta$ where all other formulations struggle. 

\begin{table}[h!]
	
	\setlength{\tabcolsep}{1pt}
	\begin{center}	
		\caption{Experiments with portfolio optimization with fixed transaction costs.}
		\label{tab:QPGurobiPortfolio}
		\scalebox{0.6}{
			\begin{tabular}{ c c c |c  c c c | c c c c | c c c c| c c c c  }
				\hline \hline
				\multirow{2}{*}{\texttt{$n$}} & \multirow{2}{*}{$\beta$} &
				\multirow{2}{*}{\texttt{igap}} & \multicolumn{4}{c|}{\textbf{\texttt{Basic}}} &  \multicolumn{4}{c|}{\textbf{\texttt{Basic+cuts}}}& \multicolumn{4}{c|}{\textbf{\texttt{Conic}}}& \multicolumn{4}{c}{\textbf{\texttt{Conic+cuts}}}\\
				&&&\texttt{rimp}&\texttt{nodes}&\texttt{time}&\texttt{egap}&\texttt{rimp}&\texttt{nodes}&\texttt{time}&\texttt{egap}&\texttt{rimp}&\texttt{nodes}&\texttt{time}&\texttt{egap}&\texttt{rimp}&\texttt{nodes}&\texttt{time}&\texttt{egap}\\
				\hline
				\multirow{3}{*}{100}& 0.95 & 30.8 & 0.0 & 39,963 & 11 & 0.0[5]& 98.9 & 57 & 0 & 0.0[5] & 86.9 & 822 & 4 & 0.0[5] & 92.8 & 1,069 & 4 & 0.0[5]\\
				& 0.98& 27.9 & 0.0 & 6,926 & 2 & 0.0[5]& 93.2 & 130 & 1 & 0.0[5] & 98.4 & 35 & 0 & 0.0[5] & 94.4 & 167 & 1 & 0.0[5] \\
				& 1.00& 32.7 & 0.0 & 3,229 & 1 & 0.0[5]& 97.9 & 32 & 0 & 0.0[5] & 96.9 & 49 & 0 & 0.0[5]  & 97.2 & 37 & 0 & 0.0[5]\\
				\multicolumn{3}{c|}{\textbf{Average}}&\textbf{0.0} &\textbf{ 16,706} & \textbf{5} & \textbf{0.0[15]}&\textbf{96.7} &\textbf{ 76} & \textbf{0} & \textbf{0.0[15]} &\textbf{94.1} &\textbf{ 302} & \textbf{2} & \textbf{0.0[15]}&\textbf{94.8} &\textbf{ 425} & \textbf{2} & \textbf{0.0[15]} \\
				\hline
				&&&&&&&&&&&&&&&&\\
				\multirow{3}{*}{250}& 0.95 & 32.4 & 0.0 & 5,344,016 & 3,600 & 15.0[0]& 98.8 & 176 & 0 & 0.0[5] & 94.0 & 175,859 & 2,880 & 1.7[1] & 96.0 & 233,024 & 2,880 & 1.1[1]\\
				& 0.98& 26.0 & 0.0 & 4,831,484 & 3,227 & 6.2[1] & 97.8 & 210 & 1 & 0.0[5]& 99.1 & 27 & 0 & 0.0[5] & 98.4 & 50,689 & 720 & 0.3[4] \\
				& 1.00& 29.4 & 0.0 & 4,518,960 & 2,970 & 4.0[1]& 97.3 & 2,061 & 49 & 0.0[5] & 97.4 & 3,597 & 38 & 0.0[5]  & 97.0 & 3,858 & 130 & 0.0[5]\\
				\multicolumn{3}{c|}{\textbf{Average}}&\textbf{0.0} &\textbf{4,898,153} & \textbf{3,265} & \textbf{8.4[2]}&\textbf{98.0} &\textbf{ 816} & \textbf{17} & \textbf{0.0[15]} &\textbf{96.8} &\textbf{ 59,827} & \textbf{973} & \textbf{0.6[11]}&\textbf{97.2} &\textbf{ 95,857} & \textbf{1,243} & \textbf{0.5[10]} \\
				\hline
				&&&&&&&&&&&&&&&&\\
				\multirow{3}{*}{500}& 0.95 & 32.3 & 0.0 & 2,906,338 & 3,600 & 24.5[0]& 97.6 & 387 & 2 & 0.0[5] & 95.2 & 26,640 & 1,441 & 0.6[3] & 97.2 & 139,686	 & 3,600 & 0.9[0]\\
				& 0.98& 26.1 & 0.0 & 3,096,026 & 3,600 & 16.4[0]& 98.0 & 343 & 3 & 0.0[5] & 96.4 & 295 & 2 & 0.0[5] & 99.1 & 182 & 1 & 0.0[5] \\
				& 1.00& 32.8 & 0.0 &3,076,324 & 3,600 & 18.8[0]& 97.5 & 328 & 2 & 0.0[5] & 93.4 & 330 & 2 & 0.0[5]  & 97.0 & 254 & 1 & 0.0[5]\\
				\multicolumn{3}{c|}{\textbf{Average}}&\textbf{0.0} &\textbf{ 3,026,229} & \textbf{3,600} & \textbf{19.9[0]}&\textbf{97.7} &\textbf{ 353} & \textbf{2} & \textbf{0.0[15]} &\textbf{95.0} &\textbf{ 9,088} & \textbf{481} & \textbf{0.2[13]}&\textbf{97.7} &\textbf{ 46,707} & \textbf{1,201} & \textbf{0.3[10]} \\
				\hline\hline
			\end{tabular}
		}
	\end{center}
\end{table}	

\subsection{General convex quadratic functions}
\label{subsec:dense}
The quadratic matrices used in the previous computations had specific structures, given by the applications considered. Although our results are \rev{for} M-matrices, in this section, we test the strength of the formulations for more general problems, with dense matrices having positive and negative off-diagonal entries. To employ the results developed for M-matrices, we simply apply the strengthening on the pairs of variables with a negative off-diagonal entry. Toward this end, we consider the mean-variance portfolio optimization
\begin{align*}
\min\;& y'Ay\\
\text{s.t.}\;& b'y\geq r\\
(MV)\ \ \ \ \ \ \ \ \ \ & 1'x \le  k\\
& 0\leq y\leq x \\
&x\in \{0,1\}^n.
\end{align*}	
where the objective is to minimize the portfolio variance $y'Ay$, where $A$ is a covariance matrix, subject to meeting a target return and satisfying sparsity constraints. 

\paragraph{\textbf{Instances}} 
In order test the effect of  positive off-diagonal elements and diagonal dominance, 
the matrix $A$ is constructed as follows: Let $\rho\geq 0$ be a parameter that controls the magnitude of the positive off-diagonal entries of $A$, and $\delta\geq 0$ be a parameter that controls the diagonal dominance of $A$. First, we construct a factor matrix $F=GG'$, where each entry in $G_{20\times 20}$ in drawn uniformly from $[-1,1]$, and an exposure matrix $X_{n\times 20}$ such that $X_{ij}=0$ with probability $0.8$, and $X_{ij}$ is drawn uniformly from $[0,1]$, otherwise. Then we construct an auxiliary matrix $\bar{A}=XFX'$. Then, for $i\neq j$, we set $A_{ij}=\bar{A}_{ij}$ if $\bar{A}_{ij}\leq 0$, and we set $A_{ij}=\rho \bar{A}_{ij}$ otherwise\footnote{The matrices generated this way have only 20.1\% of the off-diagonal entries negative on average -- the rest are positive if $\rho>0$ and $0$ if $\rho=0$. The ratio of the magnitude of the negative entries vs. the total, i.e., $\frac{\sum_{i\neq j: A_{ij}<0}|A_{ij}|}{\sum_{i\neq j}|A_{ij}|}$, is on average $0.72$ if $\rho=0.1$, $0.57$ if $\rho=0.2$ and $0.34$ if $\rho=0.5$.}. 
Finally, $\upsilon_i$ is drawn uniformly from $[0, \delta\bar{\sigma}]$, 
where $\bar{\sigma}=\frac{1}{n}\sum_{i\neq j}|A_{ij}|$,
and $A_{ii}=\sum_{j\in N}|A_{ij}|+\upsilon_i$. \rev{Observe that the auxiliary matrix $\bar A$ represents a low-rank matrix obtained from a 20-factor model, and $\diag(\upsilon)$ is a diagonal matrix representing the residual variances not explained by the factor model. The matrix $A$ is obtained by scaling the positive off-diagonals of $\bar A$ by $\rho$, and updating the diagonal entries to ensure positive definiteness by imposing diagonal dominance.}
Additionally, $b_i$ is drawn uniformly between $0.5U_{ii}$ and $1.5U_{ii}$. \rev{Finally, we let $r=0.25 \times \sum_{i\in N}b_i$ and $k=n/5$ for ``small" instances, and $r=0.125 \times \sum_{i\in N}b_i$ and $k=n/10$ for ``large" instances.} 

\paragraph{\textbf{Formulations}} We test the same formulations as in Section \ref{subsec:dualNetwork}. In this case, the diagonal matrix $\diag(\upsilon)$ is used for the \texttt{Perspective} formulation. \rev{In particular, formulations \texttt{Perspective+cuts}, \texttt{Conic} and \texttt{Conic+cuts} are based on the decomposition of the objective function given by
	\begin{align*}
\min\;&\sum_{i\in N}\upsilon_i z_i+\sum_{A_{ij}< 0}|A_{ij}|t_{ij}+y'(A-Q-\diag(\upsilon))y \\
\text{ s.t.}\;&y_i^2\leq z_ix_i,\;\forall i\in N, \quad (x_i,x_j,y_i,y_j,t_{ij})\in X,\; \forall i\neq j: A_{ij}< 0,\end{align*}
where $Q_{ij}=\min\{0,A_{ij}\}$ for $i\neq j$ and $Q_{ii}=-\sum_{j\neq i}Q_{ij}$. By construction, $A-Q-diag(\upsilon)$ is positive semi-definite.}
\paragraph{\textbf{Results}} 

Table \ref{tab:QPGurobiConstrained} presents the results for matrices with non-positive off diagonal entries (i.e., $\rho=0$) and varying diagonal dominance $\delta$. Table \ref{tab:QPGurobiConstrainedPositive} presents the results for matrices with fixed diagonal dominance and varying magnitudes for positive off-diagonal entries $\rho$. 
We see that, in all cases formulation \texttt{Conic} results in better root gap improvements than \texttt{Perspective} and \texttt{Basic}. The gap improvements depend on the parameters $\delta$ and $\rho$. In Table~\ref{tab:QPGurobiConstrained} we see that \texttt{Conic} formulation closes an additional 30\% to 40\% gap with respect to \texttt{Perspective} (independent of the diagonal dominance $\delta$). In Table~\ref{tab:QPGurobiConstrainedPositive} we observe that, as expected, \texttt{Conic} formulation is more effective at closing root gaps when the magnitude $\rho$ for the positive off-diagonal entries is small. Nevertheless, for all instances formulations \texttt{Conic} and \texttt{Conic+cuts} result in significantly stronger root improvements than \texttt{Perspective} (at least 15\%, and often much more) and the number of nodes required to solve the instances is decreased by at least an order of magnitude. 

	\begin{table}[h!]
	\setlength{\tabcolsep}{0.5pt}
	\begin{center}
		\caption{Experiments with non-positive off diagonal entries and varying diagonal dominance, \rev{$k=n/5$}.}
		\label{tab:QPGurobiConstrained}
		\scalebox{0.55}{
			\begin{tabular}{ c c c |c  c c c | c c c c | c c c c| c c c c|c c c c}
				\hline \hline
				\multirow{2}{*}{\texttt{$n$}} & \multirow{2}{*}{$\delta$} &
				\multirow{2}{*}{\texttt{igap}} & \multicolumn{4}{c|}{\textbf{\texttt{Basic}}} & \multicolumn{4}{c|}{\textbf{\texttt{Perspective}}}& \multicolumn{4}{c|}{\textbf{\texttt{Perspective+cuts}}}& \multicolumn{4}{c|}{\textbf{\texttt{Conic}}}& \multicolumn{4}{c}{\textbf{\texttt{Conic+cuts}}}\\
				&&&&\texttt{nodes}&\texttt{time}&\texttt{egap}&\texttt{rimp}&\texttt{nodes}&\texttt{time}&\texttt{egap}&\texttt{rimp}&\texttt{nodes}&\texttt{time}&\texttt{egap}&\texttt{rimp}&\texttt{nodes}&\texttt{time}&\texttt{egap}&\texttt{rimp}&\texttt{nodes}&\texttt{time}&\texttt{egap}\\
				\hline
				\multirow{3}{*}{60}& 0.1 & 88.2 &  & $4\cdot 10^5$ & 86 & 0.0[5] & 7.2 & $4\cdot 10^5$ & 99 & 0.0[5]& 19.2 & 15,230 & 544 & 0.0[5] & 43.6 & 3,704 & 107 & 0.0[5]& 43.9 & 4,653 & 154 & 0.0[5] \\
				& 0.5& 80.2 &  & $5\cdot 10^5$ & 103 & 0.0[5] & 28.0 & $2\cdot 10^5$ & 47 & 0.0[5]& 38.9 & 3,243 & 92 & 0.0[5] & 66.1 & 1,783 & 44 & 0.0[5]& 66.6 & 1,567 & 49 & 0.0[5] \\
				& 1.0& 74.0 &  & $6\cdot 10^5$ & 121 & 0.0[5] & 44.4 & $6\cdot 10^4$ & 18 & 0.0[5]& 52.8 & 1,335 & 35 & 0.0[5] & 81.5 & 863 & 14 & 0.0[5]& 82.3 & 709 & 19 & 0.0[5] \\
				\multicolumn{3}{c|}{\textbf{Average}}&&$\mathbf{ 5\cdot 10^5}$ & \textbf{103} & \textbf{0.0[15]} &\textbf{26.5} &$\mathbf{ 2\cdot10^5}$ & \textbf{55} & \textbf{0.0[15]}&\textbf{37.0} &\textbf{ 6,603} & \textbf{224} & \textbf{0.0[15]}&\textbf{63.7} &\textbf{ 2,117} & \textbf{55} & \textbf{0.0[15]}&\textbf{64.3} &\textbf{ 2,310} & \textbf{74} & \textbf{0.0[15]} \\
				\hline
				&&&&&&&&&&&&&&&&&&\\
				\multirow{3}{*}{80}& 0.1 & 90.3 &  & $1\cdot 10^7$ & 3,600 & 9.7[0] & 7.2 & $9\cdot 10^6$ & 3,600 & 10.1[0]& 4.0 & 31,194 & 3,600 & 16.1[0] & 37.0 & 26,657 & 2,758 & 5.7[2]& 37.3 & 36,998 & 2,776 & 4.6[2] \\
				& 0.5& 82.8 &  & $1\cdot 10^7$ & 3,600 & 10.5[0] & 28.2 & $6\cdot 10^6$ & 2,902 & 2.8[3]& 16.8 & 29,220 & 3,017 & 4.0[2] & 60.2 & 11,367 & 1,108 & 0.0[5]& 60.4 & 13,898 & 1,208 & 0.0[5] \\
				& 1.0& 77.0 &  & $1\cdot 10^7$ & 3,600 & 9.5[0] & 44.1 & $2\cdot 10^6$ & 988 & 0.0[5] & 27.2 & 4,889 & 566 & 0.0[5]&78.4 & 2,689 & 183 & 0.0[5]& 79.0 & 3,395 & 233 & 0.0[5] \\
				\multicolumn{3}{c|}{\textbf{Average}}& &$\mathbf{ 1\cdot 10^7}$ & \textbf{3,600} & \textbf{9.9[0]} &\textbf{26.5} &$\mathbf{ 5\cdot 10^6}$ & \textbf{2,496} & \textbf{4.3[8]}&\textbf{16.0} &\textbf{ 21,768} & \textbf{2,394} & \textbf{6.7[7]}&\textbf{58.5} &\textbf{ 13,571} & \textbf{1,350} & \textbf{1.9[12]}&\textbf{58.9} &\textbf{ 18,097} & \textbf{1,406} & \textbf{1.5[12]} \\
				\hline
				&&&&&&&&&&&&&&&&&&\\
				\multirow{3}{*}{100}& 0.1 & 90.2 &  & $1\cdot 10^7$ & 3,600 & 30.0[0] & 6.4 & $6\cdot 10^6$ & 3,600 & 29.3[0]& 2.8 & 14,855 & 3,600 & 35.8[0] & 37.1 & 19,660 & 3,600 & 19.6[0]& 37.0 & 17,047 & 3,600 & 21.6[2] \\
				& 0.5& 83.0 &  & $1\cdot 10^7$ & 3,600 & 27.5[0] & 25.2 & $5\cdot 10^6$ & 3,600 & 18.7[0]& 12.8 & 11,912 & 3,600 & 16.4[0] & 58.6 & 16,398 & 3,432 & 7.7[1]& 58.7 & 18,645 & 3,600 & 7.9[0] \\
				& 1.0& 77.3 &  & $1\cdot 10^7$ & 3,600 & 25.0[0] & 39.9 & $6\cdot 10^6$ & 3,600 & 10.0[0] & 19.7 & 16,144 & 3,236 & 4.8[1]&75.0 & 11,376 & 1,824 & 2.1[3]& 75.4 & 10,588 & 1,822 & 2.5[3] \\
				\multicolumn{3}{c|}{\textbf{Average}}& &$\mathbf{ 1\cdot 10^7}$ & \textbf{3,600} & \textbf{27.5[0]} &\textbf{23.8} &$\mathbf{ 6\cdot 10^6}$ & \textbf{3,600} & \textbf{19.3[0]}&\textbf{11.8} &\textbf{ 14,304} & \textbf{3,479} & \textbf{19.0[1]}&\textbf{56.9} &\textbf{ 15,811} & \textbf{2,952} & \textbf{9.8[4]}&\textbf{57.1} &\textbf{ 15,426} & \textbf{3,007} & \textbf{10.7[3]} \\
				\hline\hline
			\end{tabular}
		}
	\end{center}
\end{table}

\begin{table}[h!]
	\setlength{\tabcolsep}{0.5pt}
	\begin{center}
		\caption{Experiments with constant diagonal dominance \& varying  positive off-diagonal entries, \rev{$k=n/5$}.}
		\label{tab:QPGurobiConstrainedPositive}
		\scalebox{0.55}{
			\begin{tabular}{  c c c |c  c c c | c c c c | c c c c| c c c c|c c c c}
				\hline \hline
				\multirow{2}{*}{\texttt{$n$}} & \multirow{2}{*}{$\rho$} &
				\multirow{2}{*}{\texttt{igap}} & \multicolumn{4}{c|}{\textbf{\texttt{Basic}}} & \multicolumn{4}{c|}{\textbf{\texttt{Perspective}}}& \multicolumn{4}{c|}{\textbf{\texttt{Perspective+cuts}}}& \multicolumn{4}{c|}{\textbf{\texttt{Conic}}}& \multicolumn{4}{c}{\textbf{\texttt{Conic+cuts}}}\\
				&&&&\texttt{nodes}&\texttt{time}&\texttt{egap}&\texttt{rimp}&\texttt{nodes}&\texttt{time}&\texttt{egap}&\texttt{rimp}&\texttt{nodes}&\texttt{time}&\texttt{egap}&\texttt{rimp}&\texttt{nodes}&\texttt{time}&\texttt{egap}&\texttt{rimp}&\texttt{nodes}&\texttt{time}&\texttt{egap}\\
				\hline
				\multirow{3}{*}{60}& 0.1 & 62.4 & & $7\cdot 10^5$ & 153 & 0.0[5] & 46.0 & $7\cdot 10^4$ & 22 & 0.0[5]& 56.1 & 10,165 & 62 & 0.0[5] & 77.6 & 2,141 & 19 & 0.0[5]& 78.1 & 2,065 & 23 & 0.0[5] \\
				& 0.2& 57.3 &  & $7\cdot 10^5$ & 144 & 0.0[5] & 46.8 & $7\cdot 10^4$ & 22 & 0.0[5]& 56.4 & 16,642 & 89 & 0.0[5] & 73.5 & 3,314 & 20 & 0.0[5]& 73.9 & 3,261 & 24 & 0.0[5] \\
				& 0.5& 51.2 &  & $6\cdot 10^5$ & 128 & 0.0[5] & 48.0 & $6\cdot 10^4$ & 19 & 0.0[5]& 53.6 & 22,526 & 137 & 0.0[5] & 65.1 & 8,635 & 36 & 0.0[5]& 65.5 & 8,742 & 60 & 0.0[5] \\
				\multicolumn{3}{c|}{\textbf{Average}}& &$\mathbf{ 7\cdot 10^5}$ & \textbf{142} & \textbf{0.0[15]} &\textbf{46.9} &$\mathbf{ 6\cdot 10^5} $& \textbf{21} & \textbf{0.0[15]}&\textbf{55.4} &\textbf{ 16,444} & \textbf{96} & \textbf{0.0[15]}&\textbf{72.1} &\textbf{ 4,696} & \textbf{25} & \textbf{0.0[15]}&\textbf{72.5} &\textbf{ 4,689} & \textbf{36} & \textbf{0.0[15]} \\
				\hline
				&&&&&&&&&&&&&&&&&&\\
				\multirow{3}{*}{80}& 0.1 & 64.4 & & $1\cdot 10^7$ & 3,600 & 7.6[0] & 46.9 & $2\cdot 10^6$ & 852 & 0.0[5] & 32.8 & 53,774 & 1,401 & 0.4[4]& 77.4 & 8,979 & 244 & 0.0[5]& 78.2 & 8,551 & 183 & 0.0[5] \\
				& 0.2& 58.8 &  & $1\cdot 10^7$ & 3,600 & 5.9[0] & 48.1 & $2\cdot 10^6$ & 881 & 0.0[5]& 37.8 & 98,151 & 1,997 & 0.6[4] & 74.3 & 25,152 & 349 & 0.0[5]& 75.4 & 22,630 & 327 & 0.0[5] \\
				& 0.5& 51.8 &  & $1\cdot 10^7$ & 3,255 & 3.2[1] & 49.7 & $8\cdot 10^5$ & 391 & 0.0[5]& 43.7 & 185,839 & 2,462 & 0.4[4] &67.8 & 66,779 & 482 & 0.0[5]& 68.5 & 64,512 & 535 & 0.0[5] \\
				\multicolumn{3}{c|}{\textbf{Average}}& &$\mathbf{ 1\cdot 10^7}$ & \textbf{3,485} & \textbf{5.5[1]} &\textbf{48.2} &$\mathbf{ 1\cdot 10^6}$ & \textbf{708} & \textbf{0.0[15]}&\textbf{38.1} &\textbf{ 112,588} & \textbf{1,953} & \textbf{0.5[12]}&\textbf{73.2} &\textbf{ 33,637} & \textbf{358} & \textbf{0.0[15]}&\textbf{74.0} &\textbf{ 31,898} & \textbf{349} & \textbf{0.0[15]} \\
				\hline
				&&&&&&&&&&&&&&&&&&\\
				\multirow{3}{*}{100}& 0.1 & 65.0 &  & $9\cdot 10^6$ & 3,600 & 23.1[0] & 42.3 & $5\cdot 10^6$ & 3,600 & 9.1[0]& 28.8 & 65,628 & 3,600 & 6.4[0] & 73.0 & 83,300 & 2,667 & 2.5[2]& 73.8 & 67,074 & 2,904 & 2.6[2] \\
				& 0.2& 59.4 &  & $9\cdot 10^6$ & 3,600 & 20.9[0] & 43.9 & $5\cdot 10^6$ & 3,600 & 7.8[0]& 32.9 & 72,439 & 3,600 & 9.0[0] & 70.6 & 122,553 & 3,031 & 2.8[2]& 71.2 & 116,173 &3,033 & 3.3[1] \\
				& 0.5& 52.5 &  & $9\cdot 10^6$ & 3,600 & 17.2[0] & 46.2 & $5\cdot 10^6$ & 3,600 & 5.4[0]& 39.1 & 136,082 & 3,600 & 7.7[0] &64.4 & 261,440 & 3,327 & 3.8[1]& 64.8 & 270,701 & 3,396 & 3.7[1] \\
				\multicolumn{3}{c|}{\textbf{Average}}& &$\mathbf{ 9\cdot 10^6}$ & \textbf{3,600} & \textbf{20.4[0]} &\textbf{44.2} &$\mathbf{ 5\cdot 10^6}$ & \textbf{3,600} & \textbf{7.4[0]}&\textbf{33.6} &\textbf{ 91,383} & \textbf{3,600} & \textbf{7.4[0]}&\textbf{69.3} &\textbf{ 155,764} & \textbf{3,008} & \textbf{3.0[5]}&\textbf{69.9} &\textbf{ 151,316} & \textbf{3,111} & \textbf{3.2[4]} \\
				\hline\hline
			\end{tabular}
		}
	\end{center}
\end{table}

Observe that the stronger formulations of \texttt{Conic} and \texttt{Conic+cuts} do not necessarily lead to better solution times for small instances. 
Nevertheless, for the larger instances ($n=100$), using the \texttt{Conic} formulation leads to faster solution times, lower end gaps and more instances solved to optimality for all values of $\delta$ and $\rho$.
As in Section~\ref{subsec:dualNetwork}, we observe little difference between \texttt{Conic} and \texttt{Conic+cuts} --- consistent with Proposition~\ref{prop:sameSign}--- and that \texttt{Perspective+cuts} is not effective in closing the root gap. Approximating the nonlinear function with gradient inequalities appears to cause numerical issues as adding cuts weakens the relaxation contrary to expectations. Please see our comments at the end of Section~\ref{subsec:dualNetwork}. 

\rev{Finally, observe that the formulations tested require adding $O(n^2)$ additional variables, one for each negative off-diagonal entry in $A$. Thus, solving the continuous relaxations may be computationally expensive for large values of $n$. Table~\ref{tab:QPGurobiLarge} illustrates this point for matrices with $\rho=0$ and $\delta=1$. It shows, for the \texttt{Basic}, \texttt{Perspective} and \texttt{Conic} formulations, the value of the best feasible solution found (\texttt{sol}), the value of the lower bound after one hour of branch and bound  (\texttt{ebound}), the value of the lower bound after processing the root node (\texttt{rbound}), the time used to process the root node in seconds (\texttt{rtime}), and the number of nodes explored in one hour (\texttt{nodes}). Each row represents the average over five instances, and the values of \texttt{sol}, \texttt{ebound} and \texttt{rbound} are scaled so that the best feasible solution found for a given instance has value $100$. Observe that for $n\geq 150$ the lower bound found by \texttt{Conic} at the root node is stronger than the lower bounds found by other formulations after one hour of branch-and-bound. However, the continuous relaxations of \texttt{Conic} are difficult to solve for large values of $n$, leading to few branch-and-bound nodes explored and few or no feasible solutions found within the time limit.

	\begin{table}[h!]
	\setlength{\tabcolsep}{1pt}
	\begin{center}
		\caption{Experiments with $n\geq 100$ and $k=n/10$.}
		\label{tab:QPGurobiLarge}
		\scalebox{0.6}{
			\begin{tabular}{ c  |  c c c c c |  c c c c c|  c c c c c c c c c c c c c c}
				\hline \hline
				\multirow{2}{*}{\texttt{$n$}} & \multicolumn{5}{c|}{\textbf{\texttt{Basic}}} & \multicolumn{5}{c|}{\textbf{\texttt{Perspective}}}& \multicolumn{5}{c}{\textbf{\texttt{Conic}}}\\
				&\texttt{sol}&\texttt{ebound}&\texttt{rbound}&\texttt{rtime}&\texttt{nodes}&\texttt{sol}&\texttt{ebound}&\texttt{rbound}&\texttt{rtime}&\texttt{nodes}&\texttt{sol}&\texttt{ebound}&\texttt{rbound}&\texttt{rtime}&\texttt{nodes}\\
				\hline
				100& 100.0 & 94.9 &   11.0 & 0.09 & 12,375,694 & 100.0 & 100.0 & 42.2 & 0.05& 1,968,600 & 100.0 & 100.0 & 77.9 & 2.13 & 2,176 \\
				150& 100.0 & 61.3 &   11.7 & 0.07 & 9,739,922 & 100.3 & 81.3 & 45.6 & 0.08& 3,788,060 & 100.6 & 96.2 & 83.8 & 141.46 & 3,174 \\
				200& 100.0 & 46.5 &   12.4 & 0.11 & 6,382,960 & 100.3 & 72.5 & 48.4 & 0.13& 2,644,816 & - & 90.8 & 86.4 & 1090.73 & 1,531 \\
				250& 100.0 & 34.7 &   11.6 & 0.22 & 4,092,948 & 100.3 & 72.5 & 48.4 & 0.21& 1,692,204 & - & 82.7 & 82.7 & 1732.13 & 3 \\
				300& 100.0 & 29.5 &   12.0 & 0.41 & 2,763,780 & 100.9 & 61.0 & 47.1 & 0.32& 1,166,534 & - & 86.1 & 86.1 & 2333.81 & 1 \\
				\hline\hline
			\end{tabular}
		}
	\end{center}
\end{table}
}

\rev{A possible approach that achieves a compromise between the strength and the size of the formulation is to apply the proposed conic inequalities for a subset of the matrix: given an M-matrix Q, choose $I\subset \left\{(i,j)\in N\times N: Q_{ij}<0\right\}$ and use the formulation 
\begin{align*}
\min\;&\sum_{i\in P}\bar Q_{i} z_i+ \sum_{i \in \bar P} \bar Q_{i} y_i
-\sum_{(i,j)\in I}Q_{ij}t_{ij}-\sum_{(i,j)\not\in I}Q_{ij}(y_i-y_j)^2 \\
\text{ s.t.}\;&y_i^2\leq z_ix_i,\;\forall i\in P, \quad (x_i,x_j,y_i,y_j,t_{ij})\in X,\; \forall (i,j)\in I.\end{align*}
In particular, if $|I|\approx 4n$, then the results in Section~\ref{subsec:dualNetwork} suggest that the formulations would scale well. Additionally, the component corresponding to the remainder, $-\sum_{(i,j)\not\in I}Q_{ij}(y_i-y_j)^2$, could be further strengthened by linear inequalities \eqref{eq:polymatroidX} (and other subgradient inequalities corresponding to points where $\bar y\neq \bar x$) in the original space of variables instead of extended reformulations. An effective implementation of such a partial strengthening is beyond the scope of the current paper. }

\vspace{-2mm}

\section{Conclusions}
\label{sec:conclusions}

In this paper we show, under mild assumptions, that minimization of a quadratic function with an M-matrix \rev{with indicator} variables is a submodular minimization problem, hence, solvable in polynomial time. We derive strong formulations using the convex hull description of  
non-separable quadratic terms with two \rev{indicator} variables arising from a decomposition of the quadratic function.
Additionally, we provide strong conic quadratic valid 
inequalities approximating the convex hulls. 
The derived formulations generalize previous results in the binary case and separable case, and the inequalities dominate valid inequalities given in the literature. 
Computational experiments indicate that the proposed conic formulations may be
significantly more effective
compared to the natural convex relaxation and the perspective reformulation.

\bibliographystyle{spbasic}      
\bibliography{Bibliography}

\end{document}